\documentclass[a4paper]{amsart}[12pt]

%%%%%%%%%%%%%%%%%%%%%%%%%%%%%%%%%%%%%%%%%%%%%%%%%%
%Paquetes
%%%%%%%%%%%%%%%%%%%%%%%%%%%%%%%%%%%%%%%%%%%%%%%%%%
\usepackage{color}
\usepackage{amsxtra}
\usepackage{mathtools}
\usepackage{enumerate}
\usepackage{nicefrac}
\usepackage{psfrag}
\usepackage[normalem]{ulem}

%\mathtoolsset{showonlyrefs} %	enumera solo las ecuaciones que se citan
%\usepackage[notref,notcite]{showkeys}
\usepackage{ mathrsfs }
\usepackage{tikz}
\usetikzlibrary{shadings}
\usepackage{color}
\usepackage{hyperref}
\hypersetup{
  colorlinks   = true, %Colours links instead of ugly boxes
  urlcolor     = blue, %Colour for external hyperlinks
  linkcolor    = blue, %Colour of internal links
  citecolor   = red %Colour of citations
}

%%%%%%%%%%%%%%%%%%%%%%%%%%%%%%%%%%%%%%%%%%%%%%%%%%
%Enunciados
%%%%%%%%%%%%%%%%%%%%%%%%%%%%%%%%%%%%%%%%%%%%%%%%%%
\newtheorem{theorem}{Theorem}[section]
\newtheorem{lemma}{Lemma}[section]

\newtheorem{corollary}{Corollary}
\newtheorem{remark}{Remark}[section]

\newcommand{\R}{\mathbb{R}}
\newcommand{\De}{\Delta}
\newcommand{\de}{\delta}
\newcommand{\ds}{\displaystyle}
\newcommand{\al}{\alpha}
\newcommand{\la}{\lambda}
\newcommand{\e}{\varepsilon}

%%%%%%%%%%%%%%%%%%%%%%%%%%%%%%%%%%%%%%%%%%%%%%%%%%%%
%Title
%%%%%%%%%%%%%%%%%%%%%%%%%%%%%%%%%%%%%%%%%%%%%%%%%%%%
\title[Periodic solutions for the fractional Laplacian] {
Periodic solutions for the one-dimensional fractional Laplacian
%orbital characterization
}
%%%%%%%%%%%%%%%%%%%%%%%%%%%%%%%%%%%%%%%%%%%%%%%%%%%%
%Authors
%%%%%%%%%%%%%%%%%%%%%%%%%%%%%%%%%%%%%%%%%%%%%%%%%%%%

\author[B. Barrios]{B. Barrios}
	\address{B. Barrios \hfill\break\indent
		Departamento de An\'{a}lisis Matem\'{a}tico,
		Universidad de La Laguna\hfill 
		\break \indent C/. Astrof\'{\i}sico Francisco S\'{a}nchez s/n, 
		38200 -- La Laguna, SPAIN}
	\email{bbarrios@ull.es}
    %urladdr{}
	
\author[J. Garc\'{\i}a-Meli\'{a}n]{J. Garc\'{\i}a-Meli\'{a}n}
 	\address{J. Garc\'{\i}a-Meli\'{a}n \hfill\break\indent
		Departamento de An\'{a}lisis Matem\'{a}tico, 
		Universidad de La Laguna
		\hfill \break \indent C/. Astrof\'{\i}sico 
		Francisco S\'{a}nchez s/n, 38200 -- La Laguna, SPAIN
		\hfill\break\indent
		{\rm and} \hfill\break
		\indent 
		Instituto Universitario de Estudios Avanzados (IUdEA) 
		\hfill\break\indent en F\'{\i}sica At\'omica,
		Molecular y Fot\'onica, 
		\hfill\break\indent Universidad de La Laguna
		\hfill\break\indent C/. Astrof\'{\i}sico Francisco
		S\'{a}nchez s/n, 38200 -- La Laguna, SPAIN.}
	\email{jjgarmel@ull.es}
	%\urladdr{https://jjgarmel.webs.ull.es/}

\author[A. Quaas]{A. Quaas} 
	\address{A. Quaas\hfill\break\indent
		Departamento de Matem\'{a}tica, \hfill\break\indent
		Universidad T\'ecnica Federico Santa Mar\'{\i}a
		\hfill\break\indent  Casilla V-110, Avda. Espa\~na, 
		1680 -- Valpara\'{\i}so, CHILE.}
	\email{{\tt alexander.quaas@usm.cl}}
	%urladdr{}

%%%%%%%%%%%%%%%%%%%%%%%%%%%%%%%%%%%%%%%%%%%%%%%%%%%%%%%%%%%%%%%%%%%%%%%%

\begin{document}

\begin{abstract}  In this paper we are concerned with the construction of periodic 
solutions of the nonlocal problem $(-\De)^s u= f(u)$ in $\R$, where $(-\De)^s$ 
stands for the $s$-Laplacian, $s\in (0,1)$. We introduce a suitable framework which 
allows, by means of regularity, to link the searching of such solutions into the existence of the ones of a semilinear
problem in a suitable Hilbert space. Then by a bifurcation theory from eigenvalues of odd multiplicity  and also variational method that avoid the 
 constant solutions we get existence theorems which are lately enlightened with the analysis of some examples. In particular, multiplicity results for generalized Benjamin-Ono equation are obtained.
\end{abstract}

\maketitle

\section{Introduction}

In the recent years the study of equations driven by 
nonlocal operators (some times termed as nonlocal diffusion equations) has been increased. The leading 
role in most of these has been played by the well-known \emph{fractional Laplacian} 
operator, which is defined on smooth functions as
\begin{equation}\label{eq-operador}
(-\Delta)^s u(x) = c(N,s) \int_{ \mathbb{R}^N} \frac{u(x)-u(y)}{|x-y|^{N+2s}} dy.
\end{equation}
Here $s\in (0,1)$, the integral has to be understood in the principal value sense and 
$C(N,s)$ is a positive constant whose value will be of no importance to us. Thus we will 
plainly omit it in most of our arguments.

One of the important motivations in the study of equations involving the fractional 
Laplacian is to test whether the known properties for its local version, the Laplacian, 
obtained when setting $s=1$, remain valid in the full range $s\in (0,1)$. Thus all sorts of 
problems related to this operator have been considered so far. 

Our interest in the present work is to deal with problems posed in the whole real line $\R$:
\begin{equation}\label{eq-problema}
(-\Delta)^s u = f(u) \quad \text{in } \mathbb{R}.
\end{equation}
Particular types of solutions have been obtained for problem \eqref{eq-problema} depending on 
the `shape' of the nonlinearity. For instance, layer solutions in \cite{MR3165278} or 
\cite{MR3280032} or ground states in \cite{MR3002595}. However, at the best of our knowledge, 
the existence of periodic solutions to \eqref{eq-problema} has not been obtained so far except 
for \cite{MR3625076}, where a very particular $f$ is considered.  
In spite of this, we refer the reader to the papers \cite{MR3645938,MR3619062,MR3635638,MR3583529,MR3694655, 2015arXiv:1504.03493v1} and the references therein, where 
a kind of periodic problem is analyzed. But it is not clear that the solutions 
obtained are indeed solutions of a problem posed in $\R$ as \eqref{eq-problema}.
It is also worthy of mention that in most of the referred works the problem at hand is 
studied with the use of the well-known local extension introduced in \cite{MR2354493}.

Most of the properties obtained in previous papers show that essentially similar phenomena as 
in the local case appears. Thus the problem 
\begin{equation}\label{eq-problema-local}
-u'' = f(u) \quad \hbox{in }\mathbb{R}
\end{equation}
is taken as a guide to be followed. Since periodic solutions for problem \eqref{eq-problema-local} 
can be constructed very easily using ODE techniques, it is to be expected that the same happens with 
problem \eqref{eq-problema} in spite of ODE analysis being not available. This will be the main 
objective of this work.

\medskip
	
Perhaps the most interesting point in the present paper is that periodic solutions of 
\eqref{eq-problema} can be obtained by solving an adequate `periodic' problem, for 
{which tools of nonlinear analysis can be applied}. Without loss of generality, we 
assume for the moment that we are searching for $2\pi$-periodic solutions. Then 
it can be seen that the fractional Laplacian of a smooth such function reduces to 	
\begin{equation}\label{eq-operador-ele}
\mathcal{L}u(x) = \int_{0}^{2\pi} (u(x) - u(y)) H(x-y) dy, \qquad x\in (0,2\pi),
\end{equation}
where 
\begin{equation}\label{eq-def-H}
H(z)= \sum_{n=-\infty}^\infty \frac{1}{|z-2\pi n|^{1+2s}}, \quad 0<|z|<2\pi
\end{equation}
(cf. Lemma \ref{lema-operador} in Section \ref{sect-periodic}). 
Thus it is important to fully understand the operator $\mathcal{L}$. In order to 
define it in a weak sense, we consider the space $X$ \label{espacio} defined as the closure of the set of 
$2\pi$-periodic functions $u\in C^1(\R)$ with the norm
$$
\| u\|:= \frac{1}{2} \int_0^{2\pi} \hspace{-2mm} \int_0^{2\pi} (u(x)-u(y))^2 H(x-y)dy dx + 
\int_0^{2\pi} u(x)^2 dx.
$$
It is standard that $X$ is a Hilbert space when provided with the inner product
\begin{equation}\label{producto_interior}
\left< u,v\right>:= \frac{1}{2} \int_0^{2\pi} \hspace{-2mm} \int_0^{2\pi} (u(x)-u(y))(v(x)-v(y)) H(x-y)dy dx + 
\int_0^{2\pi} u(x)v(x) dx.
\end{equation}
The space $X$ possesses good embedding properties which follow directly from the 
trivial relation $\| u\|_{H^s(0,2\pi)}\le \| u\|$ for every $u\in X$ (see \cite{MR2944369} for definition 
and properties of $H^s(0,2\pi)$). We will make them explicit when necessary.

The most important properties of operator $\mathcal{L}$ will be given in Sections \ref{sect-periodic} 
and \ref{sect-global}. We only mention here that our first result will link weak solutions of the 
semilinear periodic problem 
\begin{equation}\label{eq-semilineal-periodic}
\mathcal{L}u = f(u) \quad \hbox{in } (0,2\pi)
\end{equation}
with those of the global problem 
\begin{equation}\label{eq-semilineal-global}
(-\De)^s u = f(u) \quad \hbox{in } \R.
\end{equation}
The hypotheses that we need on $f$ are quite natural. We will assume that $f$ is {a locally Lipschitz $2\pi$-periodic function that has a subcritical growth at infinity}:
\begin{equation}\label{eq-hipo}
|f(t)| \le C (1+|t|)^p \quad t\in \R,
\end{equation}
where 
$$
1<p< 2^*_s-1= \frac{1+2s}{1-2s},
$$
when $s<\frac{1}{2}$ and $1<p<\infty$ when  $s\geq\frac{1}{2}$. 
Here $2^*_s:=\frac{2}{1-2s}$ stands for the usual Sobolev exponent 
for the space $H^s$. Let us mention in passing that the function $f$ could also depend on 
the variable $x$, as long as this dependence is $2\pi$-periodic and some extra regularity 
is imposed.

These hypotheses will be termed throughout as hypotheses $(H)$. 

{Now we can state one of our main theorems that relates, by regularity estimates, weak solution of \eqref{eq-semilineal-periodic} with 
 $2\pi$-periodic classical solution of \eqref{eq-semilineal-global}.}

\begin{theorem}\label{th-equivalencia}
Assume $f$ verifies hypotheses (H). Let $u\in X$ be a weak solution of problem \eqref{eq-semilineal-periodic}. 
Then $u\in C^{2s+\al}(\R)$ for every $\al\in (0,1)$, and $u$ is a $2\pi$-periodic classical solution of 
\eqref{eq-semilineal-global}
\end{theorem}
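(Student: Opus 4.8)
The plan is to prove the theorem in three stages: first establish that the periodic operator $\mathcal{L}$ applied to a periodic function coincides (up to the omitted constant) with $(-\De)^s$ of its periodic extension, so that a weak solution of \eqref{eq-semilineal-periodic} is, once we know enough regularity, a classical solution of \eqref{eq-semilineal-global}; second, bootstrap the regularity of $u$ from $X$ up to $C^{2s+\al}$; and third, record that periodicity is preserved throughout.

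\medskip

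First I would set up the regularity bootstrap. A weak solution $u\in X$ embeds, via $\|u\|_{H^s(0,2\pi)}\le \|u\|$ and periodicity, into $H^s(\R_{\mathrm{loc}})$ and hence into $L^q_{\mathrm{loc}}(\R)$ for $q$ up to the Sobolev exponent $2^*_s$ (any $q<\infty$ when $s\ge \frac12$). Because $f$ is locally Lipschitz and $2\pi$-periodic, the subcritical growth \eqref{eq-hipo} forces $f(u)\in L^r_{\mathrm{loc}}(\R)$ for some $r>1$; in fact, since $f(u)$ is bounded on any interval once $u$ is bounded, the only real issue is the first step. I would then invoke the Lemma of Section~\ref{sect-periodic} (referred to as Lemma~\ref{lema-operador}) identifying $\mathcal{L}u$ with $c(1,s)(-\De)^s\tilde u$ on $(0,2\pi)$, where $\tilde u$ is the periodic extension; this turns the weak periodic equation into the statement that $\tilde u$ is a weak (distributional) solution of $(-\De)^s\tilde u = f(\tilde u)$ in $\R$. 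Now standard interior regularity for the fractional Laplacian applies: if $(-\De)^s v = g$ with $g\in L^\infty_{\mathrm{loc}}$, then $v\in C^{2s-\e}_{\mathrm{loc}}$ when $2s\le 1$, and $v\in C^{1,2s-1-\e}_{\mathrm{loc}}$ when $2s>1$; iterating, once $v$ is continuous (hence locally bounded by periodicity) and $f$ is locally Lipschitz, $g=f(v)$ inherits the Hölder regularity of $v$, and a Schauder-type estimate for $(-\De)^s$ upgrades $v$ to $C^{2s+\al}_{\mathrm{loc}}$ for any $\al\in(0,1)$ with $2s+\al$ not an integer, with the integer cases handled by the usual $\e$-adjustment. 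By periodicity the local estimates are uniform, so $u\in C^{2s+\al}(\R)$.

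\medskip

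The main obstacle, and the step deserving the most care, is the very first one: climbing out of the low-integrability regime to reach $L^\infty_{\mathrm{loc}}$, i.e. a fractional Brezis--Kato / Moser iteration. With only $u\in H^s$ and $f$ of subcritical growth $p<2^*_s-1$, one cannot directly plug into $L^\infty$ estimates; instead one tests the equation with truncated powers of $u$ (e.g. $u|u|^{2(\beta-1)}$ suitably capped), uses the fractional Sobolev inequality on the bilinear form together with the periodicity of $H$ to control the nonlocal tail, and absorbs the nonlinear term via subcriticality, gaining integrability at each step and reaching $L^q_{\mathrm{loc}}$ for every $q<\infty$. Once $u\in L^q_{\mathrm{loc}}$ for large $q$, $f(u)\in L^q_{\mathrm{loc}}$ and the Riesz-potential / fractional Calderón--Zygmund estimate gives $u\in C^{0,\gamma}_{\mathrm{loc}}$ for some $\gamma>0$, after which the Schauder bootstrap above runs. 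The nonlocal nature of $\mathcal{L}$ complicates the truncation argument only insofar as the interaction term $\int\int (u(x)-u(y))(\varphi(x)-\varphi(y))H(x-y)$ must be estimated; the key algebraic inequality $(a-b)(g(a)-g(b))\ge 0$ for monotone $g$ makes the dangerous term have a favorable sign, exactly as in the local case, so this goes through. Finally, periodicity: the periodic extension $\tilde u$ and $f$ are $2\pi$-periodic, the equation $(-\De)^s\tilde u=f(\tilde u)$ is translation invariant, so $\tilde u(\cdot+2\pi)$ solves the same problem with the same boundary behavior and uniqueness of the extension (or directly, $\tilde u$ was constructed periodic) gives $\tilde u(x+2\pi)=\tilde u(x)$; thus $u$ is a genuine $2\pi$-periodic classical solution of \eqref{eq-semilineal-global}.
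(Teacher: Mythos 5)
There is a genuine gap, and it sits exactly at the step you dispatch in one sentence: the passage from ``$u\in X$ is a weak solution of $\mathcal{L}u=f(u)$ on $(0,2\pi)$'' to ``the periodic extension $\tilde u$ is a (distributional/viscosity) solution of $(-\De)^s\tilde u=f(\tilde u)$ in $\R$''. You justify this by invoking Lemma \ref{lema-operador}, but that lemma is a pointwise identity valid only for functions already in $C^{2s+\al}(\R)$ --- precisely the regularity the theorem is trying to establish --- so your argument is circular at the moment you need it: the Hölder and Schauder stages of your bootstrap quote interior regularity for $(-\De)^s$, and those results require $\tilde u$ to solve the global equation in a sense (viscosity, or distributional with global control) that has not yet been obtained from the weak formulation with the kernel $H$ and periodic test functions. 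This bridge is the heart of the paper's proof and is not free: the paper solves the auxiliary global linear problem $(-\De)^s v+v=g$ classically for Hölder, periodic $g$ (Lemma \ref{lema-exist-global}), shows by a careful Fubini/principal-value computation that this classical solution is a weak solution of the periodic problem and invokes uniqueness (Lemma \ref{lema-reg-c2-alfa}), and then handles merely bounded right-hand sides by mollifying $u$ and $f$, applying the previous identification to the smooth mollifications, and passing to the limit (Lemma \ref{lema-reg-c-alfa}). A correct version of your route would need a substitute for this, e.g.\ a proof (say via Fourier multipliers on the torus, or a Fubini argument in the spirit of Lemma \ref{lema-reg-c2-alfa}) that for $u\in X$ and $\varphi\in C_c^\infty(\R)$ the bilinear form with kernel $H$ tested against the periodization of $\varphi$ equals $\int_\R \tilde u\,(-\De)^s\varphi$, together with a verification that the interior estimates you cite apply to solutions in that weak sense.

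The integrability part of your plan is sound and essentially parallels the paper: your Brezis--Kato/Moser iteration with truncated powers, the sign/monotonicity inequality for the nonlocal form, and the finite-step escape to $L^\infty$ thanks to subcriticality correspond to the paper's Lemma \ref{lema-convexidad}, Lemma \ref{lema-reg-Lp}, and the exponent iteration $q_{j+1}=q_j/(p-2q_js)$ in the proof of Theorem \ref{th-equivalencia} (including the borderline case $q_j/p=\tfrac{1}{2s}$, which you should also address). But without repairing the weak-periodic-to-global identification, the Hölder and $C^{2s+\al}$ stages of your bootstrap cannot start.
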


\bigskip

Once the equivalence between problems \eqref{eq-semilineal-global} and \eqref{eq-semilineal-periodic} is 
established, we wish to obtain some general existence theorems. 
To begin with, we will consider next the application of the bifurcation theory to the model 
problem 
\begin{equation}\label{eq-bifur}
(-\De)^s u = \la u +f(u) \quad \hbox{in } \R,
\end{equation}
where $f$ is a smooth function with no `linear part' at zero, {that is $f'(0)=0$}, and $\la\in \R$ is a bifurcation parameter. 
It is worthy of mention that the independence of $f$ on the parameter is important in order to be able 
to study problem \eqref{eq-bifur}. This has a simple explanation: most of the achievements on bifurcation 
theory rely on the existence of an eigenvalue of the linearized problem
\begin{equation}\label{eq-eigen-global}
(-\De)^s u  = \la u \quad \hbox{in } \R
\end{equation}
of odd multiplicity. However, as we show in Section \ref{sect-eigen}, all eigenvalues of problem 
\eqref{eq-eigen-global} aside $\la=0$ are of multiplicity two. Besides, the eigenvalue $\la=0$, 
which is simple, usually gives rise after bifurcation only constant solutions, which are 
uninteresting (cf. Remark \ref{remark-crandall-rabinowitz}). This forces us to use the results of
bifurcation for operators of variational type, 
obtained for instance in \cite{MR0312348} or \cite{MR0348570} (see also \cite{MR845785}, \cite{MR0463990}), 
which at the best of our knowledge are only valid for problems with a special structure.{The existence result regarding with \eqref{eq-bifur} is the following}

\begin{theorem}\label{th-bifurcacion}
Assume $f\in C^1(\R)$ is such that $f'(0)=0$. Then for every {$k\in\mathbb{N}$} there exists $r_0=r_0(k)$ 
such that for every $r\in (0,r_0)$ problem \eqref{eq-bifur} admits two periodic {classical} solutions 
$(\la_i,u_i)$, $i=1,2$ such that $\| u_i\|_{L^\infty(\R)}=r$ and $|\la_i-\frac{k^{2s}}{1+k^{2s}}|$ is small. 
The minimal period of these solutions is 
$$
2\pi (1-\la_i)^{-\frac{1}{2s}},
$$
therefore close to $2\pi (1+k^{2s})^\frac{1}{2s}$.
\end{theorem}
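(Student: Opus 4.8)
The plan is to cast problem \eqref{eq-bifur} as a bifurcation problem in the Hilbert space $X$ introduced on page~\pageref{espacio}, and then invoke variational bifurcation from eigenvalues of the associated linear operator. First I would rescale: a $2\pi(1-\la)^{-1/2s}$-periodic solution of $(-\De)^s v=\la v+f(v)$ corresponds, after the change of variables $u(x)=v\bigl((1-\la)^{-1/2s}x\bigr)$, to a $2\pi$-periodic solution of $(-\De)^s u=\la u+(1-\la)f(u)$, or equivalently—by adding $(-\De)^s$ and the zeroth-order term and using the norm of $X$—to $\mathcal{L}u=u+(1-\la)f(u)$ up to the usual identification. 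The point of this normalization is that the linear problem $\mathcal{L}u=\mu u$ has, by the analysis announced for Section~\ref{sect-eigen}, eigenfunctions $\cos kx,\sin kx$ with eigenvalues $\mu_k=1+k^{2s}$ (so that $\la=k^{2s}/(1+k^{2s})$ is precisely the value appearing in the statement), each of multiplicity two. I would then write the equation abstractly as $u=\la L u+N(u)$ in $X$, where $L$ is the compact self-adjoint inverse of $\mathcal{L}$ (compactness coming from the embedding $X\hookrightarrow H^s(0,2\pi)\hookrightarrow L^2(0,2\pi)$) and $N(u)$ is the Nemytskii-type term built from $f$, which is compact, of class $C^1$, and satisfies $N(u)=o(\|u\|)$ at the origin because $f'(0)=0$.

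The core of the argument is to restrict to the subspace $X_k\subset X$ of functions with the appropriate symmetry—$2\pi/k$-periodic functions that are, say, even—so that inside $X_k$ the eigenvalue $\mu_k$ becomes \emph{simple}. Since $f$ is a genuine function of $u$ (no explicit $x$-dependence), both $L$ and $N$ leave $X_k$ invariant, so it is legitimate to look for solutions there. On $X_k$ one now has an eigenvalue of odd (indeed one-dimensional) multiplicity, and the classical Crandall--Rabinowitz theorem, or better the variational bifurcation results of \cite{MR0312348}, \cite{MR0348570} applied to the gradient structure of the problem on $X_k$, yields a branch of nontrivial solutions bifurcating from $(\la,u)=(k^{2s}/(1+k^{2s}),0)$. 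To obtain \emph{two} solutions with a prescribed sup-norm $r$, I would parametrize the bifurcating branch by $r=\|u\|_{L^\infty}$ for $r$ small—both the even-symmetry branch and its translate (replacing $\cos kx$ by $\sin kx$, i.e. a phase shift), which is a genuinely different solution since it is not a translate of the first within the period—giving the pair $(\la_1,u_1)$, $(\la_2,u_2)$; alternatively the two solutions arise as the two ``sides'' of the branch if the bifurcation is transcritical. Either way, for $r\in(0,r_0(k))$ with $r_0$ coming from the local nature of the bifurcation, $\la_i$ is as close to $k^{2s}/(1+k^{2s})$ as desired. Finally, Theorem~\ref{th-equivalencia} upgrades these weak solutions in $X_k$ to $2\pi/k$-periodic classical solutions of the rescaled equation; undoing the rescaling produces $2\pi(1-\la_i)^{-1/2s}/k$-periodic—hence in particular $2\pi(1-\la_i)^{-1/2s}$-periodic—classical solutions of \eqref{eq-bifur}, and the minimal period is $2\pi(1-\la_i)^{-1/2s}$ once $k$ is absorbed (or, keeping $k$, one states the period for the $k$-th branch accordingly).

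The main obstacle I anticipate is verifying the hypotheses needed to apply the variational bifurcation theorem in this nonlocal setting: namely that the functional $J_\la(u)=\tfrac12\langle u,u\rangle-\tfrac{\la}{2}\int u^2-\int F(u)$ (with $F'=f$) is well-defined and $C^2$ on $X$ under the subcritical hypothesis~(H), that its gradient has the form identity minus compact, and that $0$ is an isolated critical point near the bifurcation value with the correct local topological/variational index jump as $\la$ crosses $k^{2s}/(1+k^{2s})$. The compactness and smoothness are where the subcritical growth exponent $p<2^*_s-1$ is essential, exactly as in the local case; the index computation reduces to Morse-theoretic information about the quadratic form $\langle u,u\rangle-\la\int u^2$ on $X_k$, which changes signature by one as $\la$ passes the simple eigenvalue. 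A secondary technical point is ensuring that the bifurcating solutions are nonconstant: since the bifurcation takes place from the $k$-th eigenvalue with $k\ge 1$, the leading-order profile is $\sim r\cos kx$, which is nonconstant, and a continuity argument keeps this true for small $r$; this is also what guarantees the minimal period is exactly $2\pi(1-\la_i)^{-1/2s}$ rather than a proper divisor.
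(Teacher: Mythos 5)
Your route---rescaling to the $2\pi$-periodic problem and then restoring simplicity of the eigenvalue by restricting to an invariant subspace of even, $2\pi/k$-periodic functions so that Crandall--Rabinowitz applies---is genuinely different from the paper's, which truncates $f$ to a linearly bounded $\tilde f$, reduces to $\mu(\mathcal{L}v+v)=v+\tilde f(v)$ as in \eqref{eq-rabinowitz} (Lemma \ref{lema-truncamiento}), and applies the B\"ohme--Marino variational bifurcation theorem (Theorem 11.4 of \cite{MR845785}, cf.\ \cite{MR0312348}, \cite{MR0348570}) in the full space $X$, where the eigenvalue $(1+k^{2s})^{-1}$ has multiplicity two. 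The difference is not cosmetic, because your construction does not deliver the theorem's conclusions. First, the ``two solutions'': replacing $\cos kx$ by $\sin kx$ \emph{is} a translation (by $\pi/(2k)$), contrary to your assertion; and inside your symmetric subspace the two sides $t$ and $-t$ of the Crandall--Rabinowitz branch are exchanged by the shift $x\mapsto x+\pi/k$, which preserves even $2\pi/k$-periodic functions and sends $\cos kx$ to $-\cos kx$, so by local uniqueness they are again translates of one another with the same $\la$. Hence you obtain one periodic solution per $k$ up to translation, not two; the multiplicity statement ``two solutions with prescribed norm $r$'' is precisely what the variational theorem supplies, and is the reason the paper cannot simply use Crandall--Rabinowitz (as it stresses in the Introduction). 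Second, the minimal period: your solutions are, after undoing the rescaling, $\frac{2\pi}{k}(1-\la)^{-\frac{1}{2s}}$-periodic, so their minimal period is at most about $2\pi(1+k^{-2s})^{\frac{1}{2s}}$, not $2\pi(1-\la)^{-\frac{1}{2s}}\approx 2\pi(1+k^{2s})^{\frac{1}{2s}}$ as the statement requires; ``absorbing $k$'' only relabels the branch and recovers the case $k=1$.

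Two further points. The theorem assumes only $f\in C^1(\R)$ with $f'(0)=0$, with no growth restriction, so you cannot invoke hypothesis (H) or subcriticality to make the energy functional well defined and its gradient compact on $X$; the paper handles this by first truncating $f$ outside $[-1,1]$ (so that $|\tilde f(t)|\le C|t|$) and then using Lemma \ref{lema-truncamiento}, together with Remark \ref{remark-control-normas} and Theorem \ref{th-equivalencia}, to pass back to the original $f$ once the $L^\infty$ norm is small. Your sketch needs this truncation step. Finally, a minor computational slip: the rescaling $u(x)=v\bigl((1-\la)^{-\frac{1}{2s}}x\bigr)$ leads to $(1-\la)\bigl((-\De)^s v+v\bigr)=v+f(v)$, i.e.\ \eqref{eq-rabinowitz} with $\mu=1-\la$, not to $(-\De)^s u=\la u+(1-\la)f(u)$ nor to $\mathcal{L}u=u+(1-\la)f(u)$.
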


\bigskip

Our next purpose is to show how variational methods can also be applied to deal with 
the existence of weak solutions of \eqref{eq-bifur}. If we assume $f$ has a subcritical 
growth, then the functional 
$$
J(u)=\frac{1}{4} \int_0^{2\pi} \hspace{-2mm} \int_0^{2\pi} \hspace{-1mm}(u(x)-u(y))^2 H(x-y)dy dx 
-\frac{\la}{2} \int_0^{2\pi} u(x)^2 dx - \hspace{-1mm} \int_0^{2\pi} F(u(x)) dx
$$
is well-defined in $X$. Moreover, in a rather standard way, it can be checked that its critical 
points provide with weak solutions of \eqref{eq-bifur} in $X$. Therefore it is to be expected 
that, after placing some suitable restrictions on $f$, the most popular variational methods 
can be used. 

As an example, by imposing the standard Ambrosetti-Rabinowitz condition on $f$ with 
some other minor technical requirements, the proofs in \cite{MR845785} can be adapted 
to our framework. As a matter of fact, this adaptation has already been made in the 
context of a nonlocal Dirichlet problem in \cite{MR3002745} and only minor changes 
in their proofs would be needed. 

However, an additional precaution has to be taken. By applying either the Mountain 
Pass theorem or the Linking theorem it can be proved that problem \eqref{eq-bifur} 
admits a nontrivial solution, but this solution could be a constant. To avoid this 
issue we are restricting only to the special problem 
\begin{equation}\label{eq-potencia-impar}
(-\De)^s u = \la u +|u|^{p-1}u \quad \hbox{in } \R,
\end{equation}
where $p>1$ is subcritical. In this particular case, taking advantage of the homogeneity 
of the problem, solutions can be found by constrained minimization when $\la<0$, and 
they can be guaranteed to be nonconstant only if $|\la|$ is large enough. When $\la>0$, 
however, the Linking theorem has to be used. That is,

\begin{theorem}\label{th-variacional}
Let $p>1$, and if $s<\frac{1}{2}$ assume that $p<2^*_s-1$. Then there 
exists $\la_0 \ge 0$ such that, if $\la< -\la_0$ ( resp. $\la > 0$), problem \eqref{eq-potencia-impar} 
admits at least a $2\pi$-periodic nonconstant {classical} positive (resp. sign-changing) solution.
\end{theorem}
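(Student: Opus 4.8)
The plan is to realise weak solutions of \eqref{eq-potencia-impar} in $X$ as nontrivial critical points of the functional $J$ and then to upgrade them, via the regularity behind Theorem \ref{th-equivalencia}, to $2\pi$-periodic classical solutions. Denote by $Q_\la$ the quadratic part of $J$,
\[
Q_\la(u)=\frac14\int_0^{2\pi}\!\!\int_0^{2\pi}(u(x)-u(y))^2H(x-y)\,dy\,dx-\frac{\la}{2}\int_0^{2\pi}u(x)^2\,dx,
\]
so that $J(u)=Q_\la(u)-\frac{1}{p+1}\int_0^{2\pi}|u|^{p+1}$. Two elementary facts are used throughout: the embedding $X\hookrightarrow L^{p+1}(0,2\pi)$ is compact, by $\|u\|_{H^s(0,2\pi)}\le\|u\|$ and subcriticality; and $\int_0^{2\pi}\mathcal Lu\,dx=0$ for every $u\in X$, by the antisymmetry of $H$. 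The construction is completely different for $\la<0$ and for $\la>0$.

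\emph{Case $\la<0$.} Since $-\la>0$, $Q_\la$ is positive definite on $X$ and equivalent to $\|\cdot\|^2$, and I would minimise it on $M=\{u\in X:\int_0^{2\pi}|u|^{p+1}=1\}$. Coercivity and weak lower semicontinuity of $Q_\la$, together with the weak closedness of $M$ that follows from the compact embedding, give a minimiser $u$ with infimum $m(\la)>0$. Replacing $u$ by $|u|$ keeps it in $M$ and does not increase $Q_\la$, so we may take $u\ge0$; the Lagrange multiplier rule then gives $\mathcal Lu-\la u=\nu\,|u|^{p-1}u$ for some $\nu>0$ (test with $u$ and use $Q_\la(u)>0$), so that $w:=\nu^{1/(p-1)}u$ is a nonnegative weak solution of \eqref{eq-potencia-impar}; once $w$ is known to be continuous (regularity step below), a nonlocal strong maximum principle for $\mathcal L-\la$ forces $w>0$ in $\R$. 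The delicate point is that $w$, equivalently $u$, be nonconstant for $|\la|$ large. Fix a nonconstant $v_0\in M$, set $A=\frac14\int_0^{2\pi}\!\!\int_0^{2\pi}(v_0(x)-v_0(y))^2H(x-y)\,dy\,dx>0$, and observe that by H\"older's inequality $\int_0^{2\pi}u^2\le(2\pi)^{(p-1)/(p+1)}$ on $M$, with equality exactly at constants, so $B:=(2\pi)^{(p-1)/(p+1)}-\int_0^{2\pi}v_0^2>0$. Comparing with the constant $u_c\equiv(2\pi)^{-1/(p+1)}\in M$,
\[
m(\la)\le Q_\la(v_0)=A+\frac{|\la|}{2}\int_0^{2\pi}v_0^2=A+\frac{|\la|}{2}\bigl((2\pi)^{(p-1)/(p+1)}-B\bigr)<\frac{|\la|}{2}(2\pi)^{(p-1)/(p+1)}=Q_\la(u_c)
\]
whenever $|\la|>2A/B$; since every constant in $M$ has the same $Q_\la$-value as $u_c$, the minimiser is then nonconstant. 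Thus $\la_0:=2A/B$ works, and for $\la<-\la_0$ we obtain a nonconstant positive solution.

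\emph{Case $\la>0$.} Here $Q_\la$ is indefinite, and I would apply the Linking theorem in the nonlocal form already available in \cite{MR3002745} (adapted from \cite{MR845785}). Split $X=X^-\oplus X^+$ according to the spectrum of $\mathcal L$: let $X^-$ be the span of the eigenfunctions with eigenvalue $\le\la$, which is finite dimensional and nontrivial (it contains the constants, as $0$ is an eigenvalue of $\mathcal L$), and let $X^+$ be its orthogonal complement, on which $Q_\la$ is positive definite with a spectral gap. Then $J\ge\rho_0>0$ on a small sphere $\{u\in X^+:\|u\|=\rho\}$ (using $p+1>2$), while $J\le0$ on the boundary of a large half-ball in $X^-\oplus\R e$ for a fixed $e\in X^+$ (using that $\int_0^{2\pi}|u|^{p+1}$ controls $\|u\|^{p+1}$ on the finite-dimensional piece, plus the Ambrosetti--Rabinowitz inequality with exponent $p+1$); the same inequality gives the Palais--Smale condition under subcriticality. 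The Linking theorem then yields a critical point $u$ at a level $c\ge\rho_0>0=J(0)$, hence nonzero. For $\la>0$ the only constant solution of \eqref{eq-potencia-impar} is $u\equiv0$ (no real $c$ satisfies $|c|^{p-1}=-\la$), so $u$ is nonconstant; and integrating the equation over $(0,2\pi)$ gives $\la\int_0^{2\pi}u+\int_0^{2\pi}|u|^{p-1}u=0$, which is impossible if $u$ has constant sign, so $u$ is sign-changing.

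In both cases it remains to go from the weak solution $u\in X$ to a $2\pi$-periodic classical solution. A Moser iteration on the weak formulation, using only subcriticality, gives $u\in L^\infty(\R)$; modifying $t\mapsto\la t+|t|^{p-1}t$ outside $[-\|u\|_{L^\infty},\|u\|_{L^\infty}]$ into a locally Lipschitz $2\pi$-periodic function leaves $u$ a solution and puts us under hypotheses (H), so Theorem \ref{th-equivalencia} gives $u\in C^{2s+\al}(\R)$ and that $u$ solves \eqref{eq-potencia-impar} classically. I expect the main obstacle to be the case $\la<0$: not the minimisation, which is routine, but excluding constant minimisers — this is exactly what forces $\la_0$ and rests on the sharp H\"older comparison above — and verifying the nonlocal strong maximum principle in the periodic framework.
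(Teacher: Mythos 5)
Your proposal is correct and follows essentially the same route as the paper: for $\la<-\la_0$, constrained minimization of the quadratic functional on the $L^{p+1}$-sphere with the strict H\"older comparison against the constant to exclude constant minimizers, and for $\la>0$, the Linking theorem with the spectral splitting of $X$ along the eigenfunctions of $\mathcal{L}$ (the paper's $E_k$ and $F_k$ from Lemma \ref{lema-eigen-variacional}), followed by the regularity of Theorem \ref{th-equivalencia}. Your extras (the explicit value $\la_0=2A/B$ and the integration argument for the sign change, which the paper only asserts) are fine; just note that $\int_0^{2\pi}\mathcal{L}u\,dx=0$ comes from the symmetry of $H$ combined with the antisymmetry of $u(x)-u(y)$ under exchanging variables, not from antisymmetry of $H$.
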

Clearly the classical negative solution of \eqref{eq-potencia-impar}  for $\la< -\la_0$ is also obtained simply by changing the sign of the positive one given by the previous theorem. 
\bigskip

To conclude the Introduction, let us mention that in Section \ref{sect-ejemplos} 
we will analyze some examples which include power-type. In particular, we will be able 
to obtain some existence theorems for periodic solutions which somehow resemble those 
in the local case $s=1$. {As a byproduct of these examples existence of two periodic solutions of a generalized Benajamin-Ono equation are obtained. Notice that one dimensional positive solution of Benajamin-Ono equation are studied in \cite{MR3070568} where other physical relevant models related to our equations are also discussed.}

\medskip

The rest of the paper is organized as follows: in Section \ref{sect-periodic} we 
introduce the periodic problem and give some of its preliminary properties. 
Section \ref{sect-global} is devoted to the solvability of a related `global' 
problem and some further regularity, which lead to the proof of Theorem \ref{th-equivalencia}. 
In Section \ref{sect-eigen}, the eigenvalue problem is considered, while 
Sections \ref{sect-bifur} and \ref{sect-variacional} are dedicated respectively to 
the proofs of Theorems \ref{th-bifurcacion} and \ref{th-variacional}. Finally we 
include some examples in Section \ref{sect-ejemplos}.

\bigskip

%%%%%%%%%%%%%%%%%%%%%%%%%%%%%%%%%%%%%%%%%%%%%%%%%%%%%%%%%%%%%%%%%
\section{A periodic problem}\label{sect-periodic}
%%%%%%%%%%%%%%%%%%%%%%%%%%%%%%%%%%%%%%%%%%%%%%%%%%%%%%%%%%%%%%%%%
\setcounter{equation}{0}

In this section, we will show first that the fractional Laplacian of a smooth, $2\pi$-periodic function 
reduces to the operator $\mathcal{L}$ defined in the Introduction. Then we will analyze 
the basic properties of this operator, concerning solvability of boundary value problems 
and regularity of solutions.

\begin{lemma}\label{lema-operador}
Assume $u\in C^{2s+\al}(\R)$ for some $\al\in (0,1)$ is $2\pi$-periodic. Then 
for every $x\in (0,2\pi)$:
$$
(-\De)^s u(x)= \mathcal{L}u(x),
$$
where $\mathcal{L}$ is given in \eqref{eq-operador-ele}.
\end{lemma}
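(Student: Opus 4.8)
The plan is to start from the singular-integral definition of $(-\De)^s u(x)$ on the whole line, break $\R$ into the period cells $(2\pi n,2\pi(n+1))$, $n\in\mathbb{Z}$, and fold each cell back onto $(0,2\pi)$ by the change of variables $y=z+2\pi n$, using $u(z+2\pi n)=u(z)$; the kernel $H$ then emerges precisely as the sum over $n$ of the translated kernels $|x-z-2\pi n|^{-1-2s}$. Concretely, for $x\in(0,2\pi)$ I would write
\[
(-\De)^s u(x)=\lim_{\e\to 0}\int_{\R\setminus(x-\e,x+\e)}\frac{u(x)-u(y)}{|x-y|^{1+2s}}\,dy,
\]
and note that, $x$ being interior to $(0,2\pi)$, for all small $\e$ the removed interval lies in $(0,2\pi)$, so the domain splits as the disjoint union of $(0,2\pi)\setminus(x-\e,x+\e)$ and $\R\setminus(0,2\pi)=\bigcup_{n\ne 0}(2\pi n,2\pi(n+1))$, with the singularity confined to the first piece.

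On each cell with $n\ne0$, after the substitution the contribution is $\int_0^{2\pi}(u(x)-u(z))|x-z-2\pi n|^{-1-2s}\,dz$. The elementary estimate that drives everything is that, for fixed $x\in(0,2\pi)$ and all $z\in(0,2\pi)$, one has $|x-z-2\pi n|\ge 2\pi-x>0$ for $n=1$, $|x-z-2\pi n|\ge x>0$ for $n=-1$, and $|x-z-2\pi n|\ge \pi|n|$ for $|n|\ge2$; hence $z\mapsto 2\|u\|_{L^\infty}\sum_{n\ne0}|x-z-2\pi n|^{-1-2s}$ is bounded on $(0,2\pi)$, in particular it lies in $L^1(0,2\pi)$ and dominates the partial sums of the series. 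By dominated convergence I may interchange the sum over $n\ne0$ with the integral over $z$, obtaining the absolutely convergent $\int_0^{2\pi}(u(x)-u(z))\big(H(x-z)-|x-z|^{-1-2s}\big)\,dz$, since by definition $\sum_{n\ne0}|x-z-2\pi n|^{-1-2s}=H(x-z)-|x-z|^{-1-2s}$.

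For the remaining piece, letting $\e\to0$ gives $\mathrm{p.v.}\int_0^{2\pi}(u(x)-u(z))|x-z|^{-1-2s}\,dz$, and this principal value exists because $u\in C^{2s+\al}$ with $2s+\al>2s$: symmetrizing around $x$, the numerator is $O(|z-x|^{2s+\al})$ when $2s+\al\le1$ and $O(|z-x|^2)$ (from a first- or second-order Taylor expansion) otherwise, so the integrand is controlled near $z=x$ by an $L^1$ function. Adding the two pieces, and using that $H(x-z)-|x-z|^{-1-2s}$ stays bounded near $z=x$, I conclude
\[
(-\De)^s u(x)=\mathrm{p.v.}\int_0^{2\pi}(u(x)-u(z))H(x-z)\,dz=\mathcal{L}u(x),
\]
the principal value being required only for $s\ge\tfrac12$ (for $s<\tfrac12$ the integral is absolutely convergent). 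The only genuinely delicate points are the uniform-in-$z$ bound on the tail $\sum_{|n|\ge2}|x-z-2\pi n|^{-1-2s}$ that legitimizes the interchange of sum and integral, and the routine verification that the regularity hypothesis produces a convergent principal value at $z=x$; the rest is bookkeeping with the change of variables and periodicity.
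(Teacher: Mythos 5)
Your proof is correct and follows essentially the same route as the paper's: decompose $\R$ into period cells, fold back to $(0,2\pi)$ by $y=z+2\pi n$ using periodicity, and justify interchanging the sum over $n$ with the integral via the uniform bound on the tail $|x-z-2\pi n|^{-1-2s}$ for $|n|\ge 2$. Your only refinement is to split off the singular $n=0$ term and verify the principal value separately via symmetrization and the $C^{2s+\al}$ regularity, which the paper leaves implicit; this is a welcome clarification rather than a different argument.
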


\begin{proof}
The proof is more or less straightforward: since $u$ is sufficiently smooth its fractional 
Laplacian can be pointwise evaluated in the classical sense. Thus using the periodicity of $u$ 
we have, for $x\in (0,2\pi)$:
\begin{align*}
(-\De)^s u(x) & = \int_{-\infty}^\infty \frac{u(x)-u(y)}{|x-y|^{1+2s}} dy
= \sum_{n=-\infty}^\infty \int_{2\pi n}^{2\pi(n+1)} \frac{u(x)-u(y)}{|x-y|^{1+2s}} dy\\
& = \sum_{n=-\infty}^\infty \int_{0}^{2\pi} \frac{u(x)-u(z+2\pi n)}{|x-z-2\pi n|^{1+2s}} dz
= \sum_{n=-\infty}^\infty \int_{0}^{2\pi} \frac{u(x)-u(z)}{|x-z-2\pi n|^{1+2s}} dz.
\end{align*}
Observe that only one of the integrals in the last sum is singular and needs to be 
interpreted in the principal value sense. 
We now want to interchange the sum and the integral in the last expression. To achieve 
this it suffices to show that the series is uniformly convergent. Observe that for 
$x,z\in (0,2\pi)$ we have $|x-z|\le 2\pi$, so that for $|n|\ge 2$ we obtain
$$
\frac{|u(x)-u(z)|}{|x-z-2\pi n|^{1+2s}} 
\le 2 \| u \|_{L^\infty} (2\pi)^{-1-2s} (|n|-1)^{-1-2s}.
$$
This shows that the series above is uniformly convergent, so that we may write
$$
(-\De)^s u (x) = \int_0^{2\pi} (u(x)-u(z)) \left( \sum_{n=-\infty}^\infty \frac{1}{|x-z-2\pi n|^{1+2s}}\right) dz,
$$
and the proof is concluded.
\end{proof}

\bigskip

We want to study next some boundary value problems associated to $\mathcal{L}$. The appropriate space 
to consider such problems is the space $X$ defined in page \pageref{espacio}. We would only like to recall here that 
the obvious inequality $\| u\|_{H^s(0,2\pi)} \le \| u\|$ implies that $X$ benefits from the same 
embedding properties into $L^q$ spaces and H\"older continuous function spaces as $H^s(0,2\pi)$. 
Namely (see Theorem 6.7, Theorem 6.10 and Theorem 8.2 of \cite{MR2944369}),
$$\mbox{$X\hookrightarrow L^{2^*_s}(0,2\pi)$ if $s<\frac{1}{2}$,}$$
$$\mbox{$X \hookrightarrow L^q(0,2\pi)$ for 
every $q>1$ when $s=\frac{1}{2}$}$$
and 
$$\mbox{$X\hookrightarrow C^{0,\alpha}[0,2\pi]$ for $s>\frac{1}{2}$, $\alpha=s-\frac{1}{2}>0$.}$$
For the sake of having a well-posed problem involving $\mathcal{L}$, we will consider:
\begin{equation}\label{eq-periodic}
\mathcal{L} u + u =f \quad \hbox{in } (0,2\pi)
\end{equation}
in the weak sense, where $f\in X'$. Observe also that the above mentioned embeddings imply 
in particular that $L^q \subset X'$ when $q\ge \frac{2}{1+2s}=(2^*_s)'$.	

A solution in the weak sense is a function $u\in X$ which verifies, for every $\phi\in X$:
\begin{align*}
\frac{1}{2} \int_0^{2\pi} & \hspace{-2mm} \int_0^{2\pi} (u(x)-u(y))(\phi(x)-\phi(y)) H(x-y) dy dx\\
& + \int_0^{2\pi} u(x) \phi(x) dx = \left< f,\phi\right>_{X,X'}
\end{align*}
where $\left< ,\right>_{X,X'}$ stands for the duality pairing between $X$ and $X'$. 
This may be succinctly written as $\left< u,\phi\right>= \left< f,\phi\right>_{X,X'}$ for every 
$\phi \in X$.

Problem \eqref{eq-periodic} can now be easily solved. 

\begin{lemma}\label{lema-exist}
For every $f\in X'$, there exists a unique weak solution $u\in X$ 
of problem \eqref{eq-periodic}. Moreover, 
\begin{equation}\label{eq-estim-1}
\|u \| = \| f\|_{X'}.
\end{equation}
In addition, if $f\ge 0$ in the sense of $X'$, then $u\ge 0$ almost everywhere.
\end{lemma}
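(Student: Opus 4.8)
The plan is to recognise \eqref{eq-periodic} as an abstract equation in the Hilbert space $X$ and then invoke the Riesz representation theorem. Indeed, the weak formulation of \eqref{eq-periodic} reads precisely $\langle u,\phi\rangle = \langle f,\phi\rangle_{X,X'}$ for all $\phi\in X$, where $\langle\cdot,\cdot\rangle$ is the inner product \eqref{producto_interior} that induces the norm of $X$. Since $f\in X'$, the map $\phi\mapsto \langle f,\phi\rangle_{X,X'}$ is a bounded linear functional on $X$ of norm $\|f\|_{X'}$; by Riesz representation there is a unique $u\in X$ representing it, that is, satisfying $\langle u,\phi\rangle=\langle f,\phi\rangle_{X,X'}$ for every $\phi\in X$, and the Riesz isometry gives $\|u\|=\|f\|_{X'}$, which is \eqref{eq-estim-1}. (Equivalently, $u$ is the unique minimiser over $X$ of the strictly convex, coercive functional $\frac12\|v\|^2-\langle f,v\rangle_{X,X'}$, whose Euler--Lagrange equation is exactly \eqref{eq-periodic}; Lax--Milgram would also suffice.) So existence, uniqueness and the norm identity are immediate.

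For the positivity statement, the first — and really the only nontrivial — point is that $X$ is stable under truncation: if $u\in X$ then its negative part $u^-:=\max\{-u,0\}$ belongs to $X$, with $\|u^-\|\le\|u\|$. The estimate is clear from the pointwise Lipschitz bounds $(u^-(x)-u^-(y))^2\le (u(x)-u(y))^2$ and $|u^-|\le|u|$, which control both the Gagliardo-type double integral against the kernel $H$ and the $L^2$ term in $\|\cdot\|$. To see that $u^-$ actually lies in $X$, which is the closure of the smooth $2\pi$-periodic functions, I would take $u_n\in C^1(\R)$ $2\pi$-periodic with $u_n\to u$ in $X$; then $\{u_n^-\}$ is bounded in $X$, hence a subsequence converges weakly in $X$, while $u_n^-\to u^-$ in $L^2(0,2\pi)$, so the weak limit must be $u^-$. (Alternatively one approximates $t\mapsto t^-$ by a sequence of smooth convex functions and passes to the limit.)

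With the truncation property in hand, I would test \eqref{eq-periodic} with $\phi=u^-\in X$, noting $u^-\ge 0$, to obtain $\langle u,u^-\rangle=\langle f,u^-\rangle_{X,X'}\ge 0$ because $f\ge 0$ in the sense of $X'$. On the other hand, writing $u=u^+-u^-$ with $u^+u^-=0$ pointwise, the $L^2$ contribution to $\langle u,u^-\rangle$ equals $-\|u^-\|_{L^2(0,2\pi)}^2$, while for the double integral one uses the elementary inequality $(a-b)(a^--b^-)\le -(a^--b^-)^2$, valid for all $a,b\in\R$ since $(a^+-b^+)(a^--b^-)\le 0$, to bound the Gagliardo contribution by $-\frac12\int_0^{2\pi}\int_0^{2\pi}(u^-(x)-u^-(y))^2H(x-y)\,dy\,dx$. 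Hence $\langle u,u^-\rangle\le -\|u^-\|^2\le 0$, and combined with $\langle u,u^-\rangle\ge 0$ this forces $\|u^-\|=0$, i.e.\ $u\ge 0$ almost everywhere.

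The main obstacle, as indicated, is justifying that $u^-\in X$ rather than merely $u^-\in H^s(0,2\pi)$; once this truncation lemma is secured, the remainder is a direct application of Riesz representation together with an elementary pointwise inequality.
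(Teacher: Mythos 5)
Your proposal is correct and follows essentially the same route as the paper: Riesz representation (equivalently Lax--Milgram) for existence, uniqueness and the isometry \eqref{eq-estim-1}, and then testing the weak formulation with a truncation of $u$ together with the elementary pointwise inequality on the kernel term to kill $u^-$. The only differences are cosmetic: the paper flips signs (assumes $f\le 0$ and tests with $u^+$, using $(u(x)-u(y))(u^+(x)-u^+(y))\ge (u^+(x)-u^+(y))^2$) whereas you test with $u^-$ directly, and you spell out the admissibility of the truncation as a test function, which the paper dismisses as ``not hard to see.''
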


\begin{proof}
The existence, uniqueness and equality \eqref{eq-estim-1} are a direct 
consequence of Riesz representation theorem. Thus only the maximum principle 
has to be shown. By replacing $u$, $f$ with $-u$ and $-f$, we may assume 
that $f\le 0$ instead. This means
\begin{equation}\label{eq-aux-1}
\frac{1}{2} \int_0^{2\pi} \hspace{-2mm} \int_0^{2\pi} (u(x)-u(y))(\phi(x)-\phi(y)) 
H(x-y) dy dx + \int_0^{2\pi} u(x) \phi(x) dx\le 0
\end{equation}
for every nonnegative $\phi \in X$. It is not hard to see that $\phi=u^+=\max\{u,0\}$ is a 
valid choice, and using 
$$
(u(x)-u(y))(u^+(x)-u^+(y)) \ge (u^+(x)-u^+(y))^2
$$
for $x,y \in [0,2\pi]$ we obtain from \eqref{eq-aux-1} that $\| u^+\|=0$, hence 
$u\le 0$ a. e., as was to be shown.
\end{proof}

\bigskip

Lemma \ref{lema-exist} allows to define a solution operator $K: X'\to X$, where for every 
$f\in X'$ we denote $Kf=u$. In some situations, it is more convenient to consider this operator 
as
\begin{equation}\label{eq-operador-K}
K:X\to X.
\end{equation}
It is not hard to show, using the compactness of the inclusion $X\hookrightarrow L^2$, for instance, 
that this operator is compact. This will be handy later on.

\medskip

Our ultimate aim is to connect problem \eqref{eq-periodic} with a `global' problem, that is,  
a problem for $(-\De)^s$ in $\R$. Thus in the reminder of this section we are going to 
obtain some preliminary regularity properties for the solution $u$ furnished by Lemma \ref{lema-exist}. 

We begin by analyzing $L^p$ regularity when $s\le \frac{1}{2}$ (when $s>\frac{1}{2}$ all functions 
in $X$ are continuous so that there is nothing to prove). It is worth remarking that the local $L^p$ theory for 
the fractional Laplacian does not seem to be perfectly understood. At the best of our knowledge, only 
results for solutions of the Dirichlet problem are available, like those in \cite{MR3393266}(see also \cite{MR3390088} and \cite{MR3341459} where the regularity theory has been done for the nonlinear operator $(-\Delta)^s_p$). But their 
proofs can be adapted to our setting to give: 

\begin{lemma}\label{lema-reg-Lp}
Assume $s \le \frac{1}{2}$ and let $f$ be a $2\pi$-periodic function with $f\in L^q(0,2\pi)$, 
where $q\ge 2/(1+2s)$. Let $u\in X$ be the unique solution of \eqref{eq-periodic} given by 
Lemma \ref{lema-exist}. Then:

\begin{itemize}

\item[(a)] If $q<\frac{1}{2s}$, then $u \in L^m(0,2\pi)$, where $m=\frac{q}{1-2qs}$. Moreover, 
\begin{equation}\label{eq-estim-2}
\| u \|_{L^m(0,2\pi)} \le C \| f \|_{L^q(0,2\pi)}.
\end{equation}

\item[(b)] If $q> \frac{1}{2s}$, then $u \in L^\infty(0,2\pi)$, and
\begin{equation}\label{eq-estim-3}
\| u \|_{L^\infty(0,2\pi)} \le C \| f \|_{L^q(0,2\pi)}.
\end{equation}
\end{itemize}
\end{lemma}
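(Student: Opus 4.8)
\textbf{Proof proposal for Lemma \ref{lema-reg-Lp}.}

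The plan is to run the standard bootstrap argument based on the $L^p$ mapping properties of the solution operator $K$, following the scheme used for the Dirichlet problem in \cite{MR3393266} but adapted to the periodic operator $\mathcal{L}+I$. The starting point is the explicit duality embedding already recorded: since $X \hookrightarrow L^{2^*_s}(0,2\pi)$ when $s<\frac12$ (and into every $L^q$ when $s=\frac12$), dualizing gives $L^{(2^*_s)'}(0,2\pi) = L^{2/(1+2s)}(0,2\pi) \hookrightarrow X'$, so the right-hand side $f$ makes sense in $X'$ under the stated hypothesis $q \ge 2/(1+2s)$, and Lemma \ref{lema-exist} produces the solution $u=Kf$ with $\|u\| = \|f\|_{X'}$. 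First I would establish the base estimate: if $f \in L^q$ with $2/(1+2s) \le q$, then $u \in L^{2^*_s}$ (or better) with $\|u\|_{L^{2^*_s}} \le \|u\| = \|f\|_{X'} \le C\|f\|_{L^q}$, where the last inequality is the continuity of the embedding $L^q \hookrightarrow X'$. This is the $n=0$ step of the iteration.

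Next I would iterate. The key analytic input is a gain-of-integrability statement for $\mathcal{L}+I$: if $u \in X$ solves $\mathcal{L}u + u = f$ weakly with $f \in L^r$, then $u \in L^{r^{**}}$ where $\frac{1}{r^{**}} = \frac{1}{r} - 2s$ whenever $\frac1r - 2s > 0$, with the corresponding norm bound, and $u \in L^\infty$ once $\frac1r - 2s \le 0$ — i.e.\ once $r > \frac{1}{2s}$. Granting this single step, one sets up the sequence $q_0 = q$, $\frac{1}{q_{j+1}} = \frac{1}{q_j} - 2s$; since $u$ itself belongs to $L^{q_j}$ at stage $j$ but so does $f$ (it is a fixed $L^q$ function, and $q_j$ stays at least as large as... actually one must be slightly careful here — see below), one feeds $f$ back in and gains $2s$ of integrability each time. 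In at most finitely many steps either $q_j$ reaches the stated exponent $m = q/(1-2qs)$ in case (a), which is exactly the value of $q_j$ after one step when $q < 1/(2s)$, or in case (b) one crosses the threshold $1/(2s)$ and lands in $L^\infty$. Tracking the constants through finitely many applications of the gain estimate yields \eqref{eq-estim-2} and \eqref{eq-estim-3}.

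The technical heart — and the main obstacle — is proving the single gain-of-integrability step for the \emph{periodic} operator $\mathcal{L}$, since, as the authors note, the local $L^p$ theory for $(-\Delta)^s$ is only documented for the Dirichlet problem. My approach would be to test the weak formulation $\langle u,\phi\rangle = \langle f,\phi\rangle$ with truncated powers of $u$, e.g.\ $\phi = |u|^{\beta-1}u$ suitably truncated to stay in $X$, and exploit the algebraic inequality
\[
(u(x)-u(y))\bigl(|u(x)|^{\beta-1}u(x) - |u(y)|^{\beta-1}u(y)\bigr) \ge c_\beta \bigl(|u(x)|^{\frac{\beta+1}{2}-1}u(x) - |u(y)|^{\frac{\beta+1}{2}-1}u(y)\bigr)^2
\]
to control the Gagliardo seminorm of $|u|^{(\beta+1)/2}$ by $\int f\,|u|^{\beta-1}u$; combining this with the Sobolev embedding $X \hookrightarrow L^{2^*_s}$ applied to $|u|^{(\beta+1)/2}$ and with H\"older on the right-hand side produces the reverse-H\"older-type inequality that, upon choosing $\beta$ along an increasing sequence, gives the claimed gain. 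One must take care that the zeroth-order term $\int u\,\phi = \int |u|^{\beta+1}$ has a favorable sign and can be absorbed or discarded, and that the truncation passes to the limit (monotone convergence plus the a priori bound at the previous stage); these are the routine-but-delicate points. An alternative, if one prefers to quote rather than reprove, is to invoke the explicit Fourier representation of $K$ on $2\pi$-periodic functions — $\mathcal{L}$ has eigenvalues comparable to $|k|^{2s}$ on $e^{ikx}$ — and deduce the mapping properties from the corresponding multiplier bounds; but making the multiplier argument rigorous in the endpoint $L^p$ scales is itself nontrivial, so the energy/testing route is cleaner. In either case the periodicity only enters through the positivity and the kernel bound $0 < H(z) \le C|z|^{-1-2s}$ near the singularity together with $H$ being bounded away from the endpoints, so the Dirichlet-case proofs of \cite{MR3393266} transfer essentially verbatim.
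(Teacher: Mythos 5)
Your proposal follows essentially the same route as the paper: both adapt the Dirichlet-case $L^p$ theory of \cite{MR3393266} (see also \cite{MR3390088}) to $\mathcal{L}$ by testing the weak formulation against truncated powers of the solution and using the embedding $X\hookrightarrow L^{2^*_s}(0,2\pi)$ plus H\"older to obtain a reverse-H\"older-type inequality, with a single exponent choice already giving part (a) with $m=\frac{q}{1-2qs}$ and a Moser iteration in the power (together with a normalization of $f$ to keep the bound linear in $\|f\|_{L^q}$) giving part (b). The only difference is packaging: you test directly with the truncated odd power $|u|^{\beta-1}u$ and the standard algebraic inequality, whereas the paper first reduces to $f\ge 0$ and runs the same scheme through the convexity inequality $\mathcal{L}\Phi(u)\le \Phi'(u)\mathcal{L}u$ of Lemma \ref{lema-convexidad} applied to the truncated power $\Phi$; the two devices are interchangeable, and no genuine iteration over the exponents $q_j$ is needed since the equation is linear and $f$ is fixed in $L^q$.
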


\bigskip

In order to prove Lemma \ref{lema-reg-Lp}, we need to adapt Proposition 2.2 in \cite{MR3390088} (see also Proposition 4 in \cite{MR3393266})
for our operator $\mathcal{L}$. This can be made in a straightforward way, so we will not 
include the proof. 

\begin{lemma}\label{lema-convexidad}
Let $\Phi$ be a convex, Lipschitz function. Then for every $u\in X$,
$$
\mathcal{L} \Phi(u) \le \Phi'(u) \mathcal{L} u \qquad \hbox{in } (0,2\pi)
$$
in the weak sense.
\end{lemma}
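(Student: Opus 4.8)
The plan is to prove the inequality first at the pointwise level for smooth functions, where it is an elementary consequence of convexity, and then pass to the weak formulation by density in $X$. So first I would take $u\in C^1(\R)$ which is $2\pi$-periodic (the dense class defining $X$). For such $u$, the operator $\mathcal{L}$ acts pointwise as in \eqref{eq-operador-ele}, and for $x\in(0,2\pi)$ one writes
$$
\Phi'(u(x))\,\mathcal{L}u(x)-\mathcal{L}\Phi(u)(x)
=\int_0^{2\pi}\Big[\big(\Phi(u(x))-\Phi(u(y))\big)-\Phi'(u(x))\big(u(x)-u(y)\big)\Big]H(x-y)\,dy.
$$
Since $\Phi$ is convex, the bracket is $\Phi(u(x))-\Phi(u(y))-\Phi'(u(x))(u(x)-u(y))\le 0$ by the supporting-line inequality $\Phi(b)\ge\Phi(a)+\Phi'(a)(b-a)$ applied with $a=u(x)$, $b=u(y)$; and $H\ge 0$ by \eqref{eq-def-H}. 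Hence $\mathcal{L}\Phi(u)\le \Phi'(u)\mathcal{L}u$ pointwise. Note that $\Phi$ Lipschitz guarantees $\Phi(u)\in X$ (the Gagliardo seminorm of $\Phi(u)$ is controlled by that of $u$ since $|\Phi(u(x))-\Phi(u(y))|\le \mathrm{Lip}(\Phi)|u(x)-u(y)|$), so both sides make sense.

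Next I would pass to the weak form. Testing the pointwise inequality against a nonnegative $\phi\in C^1(\R)$, $2\pi$-periodic, and using the symmetry of $H$ to symmetrize $\int_0^{2\pi}\phi(x)\mathcal{L}\Phi(u)(x)\,dx=\frac12\iint(\Phi(u(x))-\Phi(u(y)))(\phi(x)-\phi(y))H(x-y)\,dy\,dx$, we obtain
$$
\frac12\int_0^{2\pi}\!\!\int_0^{2\pi}\big(\Phi(u(x))-\Phi(u(y))\big)\big(\phi(x)-\phi(y)\big)H(x-y)\,dy\,dx
\le \int_0^{2\pi}\Phi'(u(x))\,\mathcal{L}u(x)\,\phi(x)\,dx,
$$
which is exactly the assertion "$\mathcal{L}\Phi(u)\le\Phi'(u)\mathcal{L}u$ in the weak sense" for smooth $u$. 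Finally, given a general $u\in X$, approximate it in $X$-norm by smooth $2\pi$-periodic $u_k$; then $u_k\to u$ in $L^2$ and in the Gagliardo seminorm, whence $\Phi(u_k)\to\Phi(u)$ in $X$ by the Lipschitz bound on the differences, and $\Phi'(u_k)\to\Phi'(u)$ a.e. along a subsequence. One passes to the limit in the left-hand bilinear form directly, and in the right-hand side by a dominated-convergence argument (using that $\Phi'$ is bounded since $\Phi$ is Lipschitz, and that $\mathcal{L}u_k\to\mathcal{L}u$ in $X'$, pairing against the bounded sequence $\Phi'(u_k)\phi$ in $X$). Alternatively, and more cleanly, since for a general $u\in X$ the quantity $\Phi'(u)\mathcal{L}u$ must be interpreted in a dual sense, I would rephrase the weak inequality entirely in terms of bilinear forms as above and pass to the limit there, avoiding pointwise products.

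The main obstacle I expect is the rigorous meaning of the right-hand side $\Phi'(u)\mathcal{L}u$ when $u$ is merely in $X$: $\mathcal{L}u$ lives in $X'$ and $\Phi'(u)$ is only an $L^\infty$ function, so their product is not automatically in $X'$ and the inequality needs to be stated as: for all nonnegative $\phi\in X$ with $\Phi'(u)\phi\in X$, one has $\langle\mathcal{L}\Phi(u),\phi\rangle\le\langle\mathcal{L}u,\Phi'(u)\phi\rangle$. (This is precisely the form in which such Kato inequalities are used in the $L^p$-regularity bootstrap, with $\phi$ a suitable power of $u$ and $\Phi$ a truncated power, so multiplication by $\Phi'(u)$ does land back in $X$.) Verifying that $\Phi'(u)\phi\in X$ in the relevant applications, and that one may legitimately test with it — i.e. a density argument allowing $\phi=\Phi'(u)\psi$ for truncated convex $\Phi$ — is the only genuinely technical point; everything else is the convexity inequality plus routine approximation, which is why the authors defer the proof.
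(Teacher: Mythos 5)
The paper itself does not write out a proof of this lemma: it points to Proposition 2.2 of \cite{MR3390088} (see also Proposition 4 of \cite{MR3393266}), whose argument works directly at the level of the bilinear form, with no smoothing of $u$. Your proposal contains the right convexity ingredient, and you correctly identify the only sensible weak formulation, namely $\langle\mathcal{L}\Phi(u),\phi\rangle\le\langle\mathcal{L}u,\Phi'(u)\phi\rangle$ for admissible nonnegative $\phi$ (which is exactly how the lemma is used in the proof of Lemma \ref{lema-reg-Lp}). Two caveats on your main route, though. First, a small slip: in your displayed identity the left-hand side $\Phi'(u(x))\mathcal{L}u(x)-\mathcal{L}\Phi(u)(x)$ equals \emph{minus} the integral of the bracket you wrote; the conclusion is unaffected since the bracket is $\le 0$, but as written the signs do not match. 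Second, and more seriously, the density passage is where the genuine gap lies: for a merely convex Lipschitz $\Phi$ the derivative $\Phi'$ may be discontinuous (think $\Phi(t)=|t|$), so $\Phi'(u_k)\to\Phi'(u)$ a.e.\ can fail on the set where $u$ takes a kink value, $\Phi'(u_k)\phi$ need not belong to $X$ at all (let alone be bounded there), and hence the pairing $\langle\mathcal{L}u_k,\Phi'(u_k)\phi\rangle$ that you want to pass to the limit is not defined in the duality you invoke; even for $\Phi\in C^{1,1}$ you would still need weak convergence of $\Phi'(u_k)\phi$ in $X$, which you have not established.

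The clean fix is your ``alternative'' taken one step further: drop the approximation entirely. For $a,b\in\R$ and $\phi_1,\phi_2\ge 0$, convexity gives the two-point inequality
$$
\big(\Phi(a)-\Phi(b)\big)(\phi_1-\phi_2)\ \le\ (a-b)\big(\Phi'(a)\phi_1-\Phi'(b)\phi_2\big),
$$
obtained by using $\Phi(a)-\Phi(b)\le\Phi'(a)(a-b)$ against $\phi_1$ and $\Phi(a)-\Phi(b)\ge\Phi'(b)(a-b)$ against $\phi_2$. Taking $a=u(x)$, $b=u(y)$, $\phi_1=\phi(x)$, $\phi_2=\phi(y)$, multiplying by $H(x-y)\ge 0$ and integrating over $(0,2\pi)^2$ yields
$$
\frac12\int_0^{2\pi}\!\!\int_0^{2\pi}\big(\Phi(u(x))-\Phi(u(y))\big)\big(\phi(x)-\phi(y)\big)H\,dy\,dx
\ \le\ \frac12\int_0^{2\pi}\!\!\int_0^{2\pi}\big(u(x)-u(y)\big)\big(\Phi'(u(x))\phi(x)-\Phi'(u(y))\phi(y)\big)H\,dy\,dx,
$$
i.e.\ $\langle\mathcal{L}\Phi(u),\phi\rangle\le\langle\mathcal{L}u,\Phi'(u)\phi\rangle$, valid for every $u\in X$ and every nonnegative $\phi$ with $\Phi'(u)\phi\in X$ (the left side converges absolutely because $\Phi$ is Lipschitz). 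This is precisely the adaptation the authors have in mind, it covers nonsmooth $\Phi'$, and in the application the admissibility of the test functions (powers and truncated powers of $u$) is immediate. So: right ideas and right weak formulation, but the smooth-approximation step as written would not go through; the elementary two-point inequality is the missing piece that makes the proof one line.
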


\bigskip

\begin{proof}[Proof of Lemma \ref{lema-reg-Lp}]
We first notice that it is enough to prove the Lemma with the additional requirement 
$f\ge 0$. Indeed, in the general case we can write $u=v-w$, where $v$, $w$ are the 
solutions of \eqref{eq-periodic} with $f$ replaced by $f^+$ and $f^-$. Then we can 
apply the obtained estimates for $v$ and $w$, which lead to the corresponding one 
for $u$.

Thus we will assume for the rest of the proof that $f\ge 0$, so that also $u\ge 0$ 
by Lemma \ref{lema-exist}. Let $\beta\ge 1$, $T>0$. We define for $t>0$
$$
\Phi(t)=\left\{
\begin{array}{ll}
t^\beta & 0\le t\le T\\[0.5pc]
\beta T^{\beta-1} (t-T) + T^\beta, & t>T.
\end{array}
\right.
$$
Then $\Phi$ is a convex, nondecreasing,  Lipschitz function which verifies $\Phi(t) \le t\Phi'(t)$ and
$\Phi'(t) \le \beta \Phi(t)^\frac{\beta-1}{\beta}$. By Lemma \ref{lema-convexidad}, we see that 
$$
\mathcal{L} \Phi (u) + \Phi (u) \le \Phi'(u)(f-u) + \Phi(u)\le \Phi'(u) f,
$$
in the weak sense. Testing with $\Phi(u)$ we arrive at
\begin{equation}\label{eq-aux-2}
\| \Phi(u) \|^2 \le \int_0^{2\pi} \Phi(u)\Phi'(u) f.
\end{equation}
In order to be able to use the continuity of the embedding $X \hookrightarrow L^{2^*_s}(0,2\pi)$ in 
\eqref{eq-aux-2}, we will assume throughout the rest of the proof that $s<\frac{1}{2}$. 
When $s=\frac{1}{2}$, we can use the embedding into $L^\theta (0,2\pi)$ for some arbitrary, suitably large 
exponent $\theta$ with only minor variations in the inequalities that follow. Thus
\begin{align}\label{eq-aux-3}
\nonumber \| \Phi(u) \|_{L^{2^*_s}}^2 & \le C \int_0^{2\pi} \Phi(u)\Phi'(u) f \le 
C \| f \|_{L^q} \left( \int_0^{2\pi} (\Phi(u)\Phi'(u))^{q'} \right)^\frac{1}{q'}\\ 
& \le C \beta \| f \|_{L^q} \left( \int_0^{2\pi} \Phi(u)^{\frac{(2\beta-1)}{\beta}q'} \right)^\frac{1}{q'}.
\end{align}
We now proceed differently depending on whether $q$ is less than or greater than $\frac{1}{2s}$, 
that is, we make different choices of $\beta$ in parts (a) and (b).

\medskip

To prove part (a), we just set $\beta=\frac{m}{2^*_s}$. A little algebra shows that 
$\frac{2\beta-1}{\beta} q'=2^*_s$. Thus from \eqref{eq-aux-3}
$$
\| \Phi(u) \| _{L^{2^*_s}}^\frac{2^*_s}{m}  \le C \| f\|_{L^q}.
$$
We can let $T\to +\infty$ in this inequality to obtain that
$$
\| u^\beta \|_{L^{2^*_s}}^\frac{2^*_s}{m}  \le C \| f\|_{L^q},
$$
which is precisely estimate \eqref{eq-estim-2}, since $2^*_s \beta=m$, and part (a) is proved.

\medskip

As for part (b), we will use Moser's iteration argument. It is easily seen that the arguments 
at the beginning of the proof can be adapted to give a similar inequality as \eqref{eq-aux-3} 
but with $u$ replaced by $v=u+1$ and $f$ replaced by $f+1$. If we now assume that $v \in L^{2\beta q'}$ 
for some $\beta \ge 1$, we can let $T\to +\infty$ in the modified version of 
\eqref{eq-aux-3} to get
$$
\| v^\beta \|_{L^{2^*_s}}^2 \le C\beta \| f +1\|_{L^q} \| v^{2\beta-1}\| _{L^{q'}}
\le C\beta \| f + 1 \|_{L^q} \| v^{2\beta}\| _{L^q},
$$
since $v\ge 1$. This inequality can be rewritten as
\begin{equation}\label{eq-aux-9}
\| v \|_{L^{2^*_s\beta}} \le (C\beta \| f +1 \|_{L^q})^\frac{1}{2\beta} \| v \| _{L^{2\beta q'}}.
\end{equation}
We now particularize \eqref{eq-aux-9} by setting $\beta_j=\chi^j$, $j=0,1,\ldots$, where 
$\chi=\frac{2^*_s}{2q'}$. It is not hard to check that $q>\frac{1}{2s}$ implies $\chi>1$. 
Moreover, since $2q'<2^*_s$, we see that $v\in L^{2\beta_0q'}$. Thus for every nonnegative 
integer $j$, we have
$$
\| v \|_{L^{\beta_{j+1}r}} \le (C\beta_j \| f +1 \|_{L^q})^\frac{1}{2\beta_j} \| v \| _{L^{\beta_j r}}.
$$
where $r=2q'$.

Iterating this inequality we see that
$$
\| v \|_{L^{\beta_{j+1}r}} \le \left( C\| f +1 \|_{L^q}\right) ^{\frac{1}{2} \sum_{j=0}^k \frac{1}{\chi^j}} 
\chi ^{  \frac{1}{2} \sum_{j=0}^k \frac{j}{\chi^j}}\| v \| _{L^r}.
$$
Since $\chi>1$ we see that the sums in the right-hand side give rise to convergent series. Thus we 
are allowed to let $j\to +\infty$ to obtain
\begin{equation}\label{eq-aux-4}
\| u+1 \|_{L^\infty} \le C \|f+1\|_{L^q}^D \| u+1 \| _{L^{2q'}}
\end{equation}
for some positive constants $C,D$. Since the norm of $u$ in $L^{2q'}$ can be controlled in terms 
of the norm in $X$, we immediately see from \eqref{eq-aux-4} that, for a possibly different constant 
$C$:
\begin{equation}\label{eq-aux-5}
\| u \|_{L^\infty} \le C \left[(\|f\|_{L^q}^D +1)(\| u  \|+1)+1\right].
\end{equation}
We finally obtain \eqref{eq-estim-3}. For his sake, simply notice that on one hand 
$w=u/\|f\|_{L^q}$ solves \eqref{eq-periodic} with $f$ replaced by $g=f/\|f\|_{L^q}$. 
Since $g$ has $L^q$-norm equal to one, \eqref{eq-aux-5} gives
$$
\| v\|_{L^\infty} \le C( \| v\| + 1)
$$
for a different constant $C$. That is,
$$
\| u\|_{L^\infty} \le C( \| u\| + \|f \|_{L^q}).
$$
On the other hand, by Lemma \ref{lema-exist}, we can bound the norm of $u$ in terms of the norm 
$\| f\|_{X'}$. Since $\| f\|_{X'}\le \| f\|_{L^q}$, we see that \eqref{eq-estim-3} holds, 
concluding the proof of the lemma.
\end{proof}

\bigskip

\begin{remark}{\rm When $s<\frac{1}{2}$ and $q$ equals the threshold value $\frac{1}{2s}$, it can 
be seen as in Theorem 15 of \cite{MR3393266} that $u\in L^m$ for every $m>1$. 
}\end{remark}

\bigskip

%%%%%%%%%%%%%%%%%%%%%%%%%%%%%%%%%%%%%%%%%%%%%%%%%%%%%%%%%%%%%%%%%
\section{A global problem. Further regularity}\label{sect-global}
%%%%%%%%%%%%%%%%%%%%%%%%%%%%%%%%%%%%%%%%%%%%%%%%%%%%%%%%%%%%%%%%%
\setcounter{equation}{0}

In this section we will introduce a global periodic problem which is uniquely solvable 
and taht will allow us to prove Theorem \ref{th-equivalencia}. Consider:
\begin{equation}\label{eq-global}
(-\De)^s u + u = f \quad \hbox{in } \R,
\end{equation}
where $f$ is a $2\pi$-periodic, bounded function, and we are only interested in 
bounded solutions.

We first solve problem \eqref{eq-global} when $f$ is smooth enough, which will permit us 
restrict our attention to classical solutions.

\begin{lemma}\label{lema-exist-global}
Let $f\in C^\al(\R)$ be $2\pi$-periodic, where $\al\in (0,1)$. Then problem 
\eqref{eq-global} admits a unique bounded classical solution $u \in C^{2s+\al}(\R)$, 
which is also $2\pi$-periodic. Moreover, 
\begin{equation}\label{eq-estim-infty}
\| u\|_{L^\infty (\R)} \le \| f\|_{L^\infty(\R)}.
\end{equation}
\end{lemma}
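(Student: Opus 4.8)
The plan is to solve \eqref{eq-global} first on the torus, interpreting it via the operator $\mathcal{L}$, and then bootstrap regularity to conclude the solution is classical, $2\pi$-periodic, and global. First I would observe that since $f$ is $2\pi$-periodic and continuous, it lies in $L^q(0,2\pi)$ for every $q$, hence in $X'$; therefore Lemma \ref{lema-exist} furnishes a unique weak solution $u\in X$ of $\mathcal{L}u+u=f$ in $(0,2\pi)$, with $\|u\|=\|f\|_{X'}\le\|f\|_{L^\infty}$. Extending this $u$ periodically to $\R$, I would show it is the function we seek. The estimate \eqref{eq-estim-infty} would follow from the maximum principle: once we know $u$ is continuous (from the regularity step below) and the equation holds pointwise, evaluating at a maximum point $x_0$ of $u$ gives $(-\Delta)^s u(x_0)\ge 0$, hence $u(x_0)\le f(x_0)\le\|f\|_{L^\infty}$, and symmetrically for the minimum; alternatively one can redo the $u^+$-testing argument of Lemma \ref{lema-exist} applied to $(u-\|f\|_{L^\infty})^+$.

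The heart of the proof is the regularity bootstrap. Starting from $u\in X\hookrightarrow L^{2^*_s}$ (or any $L^q$, $q<\infty$, when $s=\tfrac12$; and $u$ already H\"older when $s>\tfrac12$), I would rewrite the equation as $\mathcal{L}u+u=g$ with $g:=f\in L^\infty$, and iterate Lemma \ref{lema-reg-Lp}. If $s\ge\tfrac12$ we are immediately in the continuous range. If $s<\tfrac12$, applying Lemma \ref{lema-reg-Lp}(a) repeatedly with $g\in L^q$ for all $q$ improves the integrability of $u$ in finitely many steps until we reach an exponent $q>\tfrac1{2s}$, at which point part (b) gives $u\in L^\infty(0,2\pi)$, hence $u\in L^\infty(\R)$ by periodicity. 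Next, knowing $u$ bounded, I would invoke the known local Schauder-type estimates for the fractional Laplacian (e.g. from the references cited for Lemma \ref{lema-reg-Lp}, or the standard interior regularity $(-\Delta)^s u = h$, $h\in C^\al_{loc}$, $u\in L^\infty\Rightarrow u\in C^{2s+\al}_{loc}$): writing $(-\Delta)^s u = f - u$ with $f-u$ only bounded gives first $u\in C^\gamma$ for some small $\gamma$; then $f-u\in C^{\min(\al,\gamma)}$, and one more application yields $u\in C^{2s+\al}(\R)$ (the periodicity of all data makes the local estimates uniform, so the global $C^{2s+\al}$ norm is finite). At this regularity Lemma \ref{lema-operador} applies, so $(-\Delta)^s u=\mathcal{L}u$ pointwise on $(0,2\pi)$, and by periodicity on all of $\R$; therefore $u$ solves \eqref{eq-global} classically in $\R$.

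For uniqueness among bounded solutions: if $u_1,u_2$ are two bounded classical solutions, $w=u_1-u_2$ is bounded and satisfies $(-\Delta)^s w + w = 0$ in $\R$. Evaluating at a point where $w$ is close to its supremum (using that a bounded function attains its sup up to $\e$, with $(-\Delta)^s$ of a near-maximizer being $\ge -\e'$ by a standard sliding/approximation argument, or by applying the maximum principle on the torus since $w$ is automatically $2\pi$-periodic once we know the torus solution is unique) forces $\sup w\le 0$; symmetrically $\inf w\ge 0$, so $w\equiv 0$. The cleanest route is to note that any bounded classical solution restricts to a weak solution of $\mathcal{L}u+u=f$ on $(0,2\pi)$ in $X$ — this requires checking $u\in X$, which follows from $u\in C^{2s+\al}$ being $2\pi$-periodic so that the double integral defining $\|u\|$ converges — and then uniqueness is exactly Lemma \ref{lema-exist}. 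The main obstacle I anticipate is precisely this last point: justifying that a global bounded classical solution is automatically $2\pi$-periodic (so that it lives on the torus), since a priori a bounded solution of \eqref{eq-global} need not inherit the periodicity of $f$; one resolves it either by a Liouville-type comparison argument (the difference of $u(\cdot)$ and $u(\cdot+2\pi)$ solves the homogeneous equation and is bounded, hence zero by the maximum principle on $\R$), or by noting the torus solution, extended periodically, is a bounded classical solution and invoking global uniqueness proved via the $\R$-maximum principle for bounded solutions of $(-\Delta)^s w + w = 0$.
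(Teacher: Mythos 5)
There is a genuine gap at the central regularity step, and it is a structural one. Your plan is to start from the weak solution $u\in X$ of $\mathcal{L}u+u=f$ given by Lemma \ref{lema-exist}, bootstrap integrability via Lemma \ref{lema-reg-Lp}, and then invoke interior H\"older/Schauder estimates for the fractional Laplacian by ``writing $(-\Delta)^s u=f-u$''. But at that stage you do not know that the periodic extension of $u$ solves $(-\Delta)^s u+u=f$ in any sense (pointwise, viscosity, or distributional) to which those estimates apply: Lemma \ref{lema-operador} identifies $\mathcal{L}$ with $(-\Delta)^s$ only for functions already in $C^{2s+\al}$, which is precisely what you are trying to prove. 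Transferring the weak $X$-formulation (with the periodic kernel $H$ and periodic test functions) to a formulation on $\R$ against compactly supported test functions requires an unfolding/Fubini argument whose principal-value manipulations are delicate even for smooth functions; in the paper this is exactly the content of Lemma \ref{lema-reg-c2-alfa}, which is proved \emph{using} the present lemma, and Lemma \ref{lema-reg-c-alfa} (the $C^\al$ estimate for bounded right-hand side) also rests on it. So within the paper's architecture your argument is circular, and as a standalone argument the key step ``$u\in L^\infty$ weak solution of the $\mathcal{L}$-problem $\Rightarrow$ fractional Schauder theory applies'' is unjustified. The paper avoids this entirely by constructing the global solution directly: approximate \eqref{eq-global} by Dirichlet problems on $(-M,M)$ with the constant barriers $\pm\|f\|_{L^\infty}$ (which also yields \eqref{eq-estim-infty}), use the uniform H\"older estimates of \cite{MR2494809} to pass to the limit in the viscosity sense, upgrade to $C^{2s+\al}$ by \cite{MR2270163}, and only afterwards (in Lemma \ref{lema-reg-c2-alfa}) identify this smooth global solution with the weak solution in $X$.

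A secondary issue is uniqueness. You need uniqueness among \emph{all} bounded classical solutions, which need not be periodic a priori; you correctly flag this, but your resolution appeals to a maximum principle on $\R$ for bounded solutions of $(-\Delta)^s w+w=0$ whose proof you only gesture at (``a standard sliding/approximation argument''), and your ``cleanest route'' through Lemma \ref{lema-exist} presupposes the periodicity it is meant to establish. The paper proves this maximum principle by translating along a maximizing sequence and using compactness from the interior estimates to produce a limit solution that attains its supremum; a near-maximizer argument with a small smooth bump could also be made to work, but as written this step is not carried out. In the paper, periodicity of the solution then follows from uniqueness applied to $u(\cdot+2\pi)$, rather than being built in from the torus formulation.
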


\begin{proof}
We begin by proving the existence of a solution $u$ verifying \eqref{eq-estim-infty}. 
Choose $M>0$ and consider the problem
\begin{equation}\label{eq-problema-finito}
\left\{
\begin{array}{ll}
(-\De)^s u + u = f & \hbox{in } (-M,M),\\
\ \ u=0 & \hbox{in } \R\setminus (-M,M).
\end{array}
\right.
\end{equation}
It is clear that $\underline{u}=-\|f\|_{L^\infty(\R)}$, $\overline{u}=\|f\|_{L^\infty(\R)}$ are a 
pair of order sub and supersolutions of \eqref{eq-problema-finito}. Thus there exists a viscosity 
solution $u_M$ of \eqref{eq-problema-finito} verifying 
$$
\| u_M \|_{L^\infty(\R)} \le \|f \|_{L^\infty(\R)},
$$
(see for instance Theorem A.1 in the Appendix of \cite{2016arXiv170402597}).
Now we can use the H\"older estimate given by Theorem 12.1 of \cite{MR2494809} 
to deduce the existence of $\al\in (0,1)$ such that, for every $R>0$, there exist  
$M_0$ and $C$ such that 
$$
\| u_M \|_{C^\al[-R,R]}\le C,
$$
for $M\ge M_0$, where $C$ does not depend on $M$. Thus we may choose a sequence 
$M_n\to +\infty$ such that $u_{M_n}\to u$ uniformly on compact sets with $u\in C(\R)$. 
Using Corollary 4.7 in \cite{MR2494809} we see that $u$ is a viscosity solution of 
\eqref{eq-global}. By standard regularity, $u\in C^{2s+\al}(\R)$. Check for instance 
Proposition 2.8 in \cite{MR2270163}.

To prove uniqueness, it suffices to show that the associated homogeneous problem 
admits only the trivial solution. Thus assume $v\in C^{2s+\al}(\R)\cap L^\infty(\R)$ 
verifies $(-\De)^s v +v =0$ in $\R$. Choose a sequence $\{x_n\}$ such that 
$v(x_n)\to \sup_{\R} v$. 

Consider the functions $v_n(x)=v(x+x_n)$, which are a uniformly bounded sequence of 
solutions of the same equation. Arguing similarly as above we obtain, by passing to a 
subsequence, that $v_n\to \bar v$ locally uniformly, where $\bar v$ is a viscosity -- hence 
classical -- solution of $(-\De)^s \bar v+ \bar v=0$ in $\R$ with $\bar v(0)= 
\sup_{\R} v=\sup_{\R} \bar v$. Now, since $\bar v$ attains its global maximum at $x=0$, we can 
evaluate the equation at this point to obtain
$$
\sup_{\R} v\le 0.
$$
This shows that $v\le 0$ and a similar argument gives $v \equiv 0$ in $\R$, hence the uniqueness.

Finally, the periodicity of $u$ is a consequence of uniqueness and the periodicity of 
$f$. Indeed, the function $u(\cdot +2\pi)$ is a solution of \eqref{eq-global} and uniqueness 
implies $u(\cdot +2\pi)\equiv u$. The proof is concluded.
\end{proof}

\bigskip

The next result is our first connection between the periodic problem 
\eqref{eq-periodic} and the global one \eqref{eq-global}.

\begin{lemma}\label{lema-reg-c2-alfa}
Assume $f\in C^\al(\R)$ is $2\pi$-periodic and let $u\in X$ be the unique 
weak solution of \eqref{eq-periodic}. Then $u\in C^{2s+\al}(\R)$ and $u$ is 
the unique solution of \eqref{eq-global}.
\end{lemma}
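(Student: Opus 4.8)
The plan is to run a bootstrap argument that upgrades the weak solution $u\in X$ of \eqref{eq-periodic} step by step to a $C^{2s+\al}(\R)$ function, and then to invoke the uniqueness in Lemma \ref{lema-exist-global} to identify it with the solution of \eqref{eq-global}. First I would treat the right-hand side $f$ as a datum in $X'$; since $f$ is continuous and $2\pi$-periodic, it lies in $L^\infty(0,2\pi)\subset L^q(0,2\pi)$ for every $q$. When $s>\frac12$ there is nothing to do for the first stage, as $X\hookrightarrow C^{0,\al}[0,2\pi]$ already; when $s\le\frac12$ I would apply Lemma \ref{lema-reg-Lp}: taking $q$ as large as we wish (case (b), $q>\frac{1}{2s}$) gives directly $u\in L^\infty(0,2\pi)$, hence $u\in L^\infty(\R)$ by periodicity. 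So in all cases we may start from a bounded $2\pi$-periodic weak solution.

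Next I would bootstrap the Hölder regularity. Knowing $u\in L^\infty(\R)$, rewrite \eqref{eq-periodic} as $\mathcal{L}u = f - u =: g$ with $g$ bounded and $2\pi$-periodic; by Lemma \ref{lema-operador}-type reasoning (the operator $\mathcal{L}$ \emph{is} $(-\De)^s$ on periodic functions) $u$ weakly solves $(-\De)^s u + u = f$ in $\R$. At this point I would appeal to the interior Hölder and Schauder estimates for $(-\De)^s$ already cited in the paper — Theorem 12.1 of \cite{MR2494809} for a first $C^{\beta}$ gain, then the Schauder-type statement (Proposition 2.8 in \cite{MR2270163}) — to conclude iteratively that $u\in C^{\gamma}_{loc}(\R)$ for some $\gamma>0$, then that $f(x)-u(x)$ — wait, here $f$ is the given datum, not a composition — is $C^{\min(\al,\gamma)}$, so $(-\De)^s u = f-u\in C^{\min(\al,\gamma)}_{loc}$, and a further application of the Schauder estimate pushes $u$ into $C^{2s+\al'}_{loc}$ for every $\al'<\al$; a final iteration with the full $C^\al$ regularity of $f-u$ yields $u\in C^{2s+\al}(\R)$ (the uniform bounds on compact sets transfer to a global bound by periodicity, since a period is covered by finitely many balls). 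The only mild subtlety is matching the exponent thresholds so that $2s+\al$ is actually reached and is not itself an integer; this is handled exactly as in the cited references and requires only bookkeeping.

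Once $u\in C^{2s+\al}(\R)$ is $2\pi$-periodic, Lemma \ref{lema-operador} applies and tells us $(-\De)^s u(x)=\mathcal{L}u(x)$ for $x\in(0,2\pi)$, so the weak identity $\mathcal{L}u+u=f$ upgrades to the pointwise, classical identity $(-\De)^s u + u = f$ in $\R$ (using periodicity to extend from $(0,2\pi)$ to all of $\R$). Thus $u$ is a bounded classical $2\pi$-periodic solution of \eqref{eq-global}, and by the uniqueness part of Lemma \ref{lema-exist-global} it coincides with the unique such solution. The main obstacle I anticipate is not conceptual but technical: carefully feeding the regularity of $f-u$ back into the fractional Schauder estimate enough times, and verifying that the available references are stated in a form (interior estimates, viscosity-vs-weak solutions) that legitimately applies to our periodic weak solution — most likely one first checks that the weak solution is also a viscosity solution, or invokes a regularity result valid for distributional solutions directly.
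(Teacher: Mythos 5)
Your bootstrap strategy has a genuine gap at its pivot point: the claim that the $X$-weak solution of $\mathcal{L}u+u=f$ ``weakly solves $(-\De)^s u+u=f$ in $\R$'' is exactly what cannot be asserted at that stage. Lemma \ref{lema-operador} identifies $\mathcal{L}$ with $(-\De)^s$ only for functions already in $C^{2s+\al}(\R)$, i.e.\ it presupposes the regularity you are trying to prove; for a mere $u\in X$, all you have is the bilinear identity over $(0,2\pi)\times(0,2\pi)$ with the kernel $H$, and passing from that to a distributional or viscosity formulation of the global equation on $\R$ (the form in which Theorem 12.1 of \cite{MR2494809} and Propositions 2.8--2.9 of \cite{MR2270163} are stated) requires an unfolding/Fubini argument with careful treatment of the principal value and of the corner regions near $x=y$ and near the endpoints of the period interval. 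You flag this at the end as a technical check (``one first checks that the weak solution is also a viscosity solution''), but that check is the entire content of the lemma, not a footnote; without it the regularity references simply do not apply to $u$, and the bootstrap never gets off the ground.

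The paper avoids this difficulty by arguing in the opposite direction: it takes the classical, $2\pi$-periodic solution $v\in C^{2s+\al}(\R)$ of \eqref{eq-global} furnished by Lemma \ref{lema-exist-global} (built by sub/supersolutions on $(-M,M)$, viscosity limits and Schauder estimates), and shows that this smooth $v$ satisfies the weak formulation of \eqref{eq-periodic}. For smooth functions the needed interchange of integrals can be justified, and this is where the paper spends its effort, working on the truncated sets $Q_{\e,\delta}$ and letting $\delta,\e\to 0$ while using \eqref{autonom}. Uniqueness of the weak solution (Lemma \ref{lema-exist}) then gives $u=v$, so $u$ inherits the $C^{2s+\al}$ regularity with no regularity theory for $\mathcal{L}$-weak solutions ever being invoked. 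If you want to keep your direct approach, you would have to supply essentially the same Fubini/principal-value computation (now starting from a nonsmooth $u$, which is harder), or mollify first as the paper does later in Lemma \ref{lema-reg-c-alfa} --- but that lemma itself rests on the present one, so the paper's order of reasoning is not optional decoration: it is what makes the argument close.
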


\begin{proof}
Let $v\in C^{2s+\al}(\R)$ be the unique solution of \eqref{eq-global}. The lemma 
will be proved if we show that $v$ is a weak solution of \eqref{eq-periodic}. 
To show this, observe that by Lemma \ref{lema-operador} we have
\begin{equation}\label{autonom}
\int_0^{2\pi} (v(x)-v(y)) H(x-y) dy = f(x) - v(x) \quad \hbox{for every } x\in (0,2\pi),
\end{equation}
where the integral is to be understood in the principal value sense. 
Take an arbitrary $2\pi$-periodic function $\phi\in C^1(\R)$. By 
the regularity of both $v$ and $\phi$ the integral 
$$
I:= \frac{1}{2} \int_0^{2\pi} \hspace{-2mm} \int_0^{2\pi} (v(x)-v(y)) (\phi(x)-\phi(y)) H(x-y) dy dx
$$
is absolutely convergent. Our intention is to apply Fubini's theorem to this integral, but 
some care is needed since in the procedure some of the involved integrals turn out to be 
defined only in the principal value sense.

Thus we choose $0<\delta<\e$ and let 
\begin{align*}
Q_{\e,\delta}:= &\Big[([0,\e]\times [\e,2\pi-\e] )\cup ([\e,2\pi-\e]\times [0,2\pi])\\ 
& \cup ([2\pi-\e,2\pi]\times [\e,2\pi-\e]) \Big] \cap \{|y-x|>\delta\}.
\end{align*}
The part of the integral $I$ taken in $Q_{\e,\delta}$ can be manipulated since everything 
is smooth there. Thus splitting the integral in two parts and interchanging the 
variables in the standard way (taking into account that $H$ is symmetric) we obtain:
\begin{align*}
I & =\frac{1}{2} \int \hspace{-2mm} \int _{Q_{\e,\delta}} (v(x)-u(y)) (\phi(x)-\phi(y)) H(x-y) dy dx
+o(1)\\
& =  \int \hspace{-2mm} \int _{Q_{\e,\delta}} \phi(x) (v(x)-v(y)) H(x-y) dy dx + o(1) =:J_{\e,\delta}+o(1).
\end{align*}
Here $o(1)$ is a number which goes to zero as $\e$ and $\de$ go to zero. By Fubini's theorem, the 
last integral can be written in terms of iterated integrals as follows:
\begin{align*}
J_{\e,\delta} & = \left( \int_0^{\e-\delta}\hspace{-2mm} \int_\e^{2\pi-\e} +\int_{\e-\delta}^\e \int_{x+\delta}^{2\pi-\e} 
 + \int_\e^{2\pi-\e} \hspace{-2mm}\int_0^{x-\delta} +\int_\e^{2\pi-\e}\hspace{-1mm} \int_{x+\delta} ^{2\pi}\right.\\
& + \left.\int_{2\pi-\e}^{2\pi-\e+\delta} \hspace{-1mm}\int_\e^{x-\delta}+ 
\int_{2\pi-\e+\delta}^{2\pi} \int_\e^{2\pi-\e}\right) \phi(x) (v(x)-v(y)) H(x-y)dy dx.
\end{align*}
Arguing as in Lemma 3.2 in \cite{2016arXiv170402597} we may let $\de\to 0$ to obtain 
that $J_{\e,\de}\to J_\e$, given by
\begin{align*}
J_\e: & = \left( \int_0^{\e} \int_\e^{2\pi-\e}  + \int_\e^{2\pi-\e}\hspace{-2mm} \int_0 ^{2\pi} 
+ \int_{2\pi-\e}^{2\pi} \int_\e^{2\pi-\e}\right) \phi(x) (v(x)-v(y)) H(x-y)dy dx\\
& = \left( \int_0^{\e} \int_\e^{2\pi-\e}  + \int_{2\pi-\e}^{2\pi} \int_\e^{2\pi-\e}\right) 
\phi(x) (v(x)-v(y)) H(x-y)dy dx\\
& + \int_\e^{2\pi-\e}  \phi(x) (f(x)-v(x))dx,
\end{align*}
where in the last equatity we have used \eqref{autonom}. It is not hard to see that the first two integrals in the last right-hand side converge to zero as $\e\to 0$. 
Indeed, we can write for the first one 
\begin{align*}
\int_0^{\e} & \int_\e^{2\pi-\e}\phi(x) (v(x)-v(y)) H(x-y)dy dx \\
& = \left(\int_0^{\e} \int_\e^{\pi} + \int_0^{\e} \int_\pi^{2\pi-\e}\right) \phi(x) (v(x)-v(y)) H(x-y)dy dx. 
\end{align*}
We estimate only the first of these integrals, the other one being taken care with in a similar 
fashion. By the regularity of $u$ and $\phi$ we can find $C>0$ so that
\begin{align*}
\int_0^{\e} & \int_\e^{\pi} |\phi(x)| |v(x)-v(y)| H(x-y)dy dx \le C \int_0^{\e} \int_\e^{\pi} 
|x-y|^{{-2s}}dy dx\\
& = \frac{C}{{1-2s}} \int_0^\e ((\pi-x)^{{1-2s}}-(\e-x)^{{1-2s}}) dx \\[.75pc]
& = O(\e) + O(\e^{{2-2s}}) = o(1).
\end{align*}
The remaining integral in the expression of $J_\e$ is dealt with in a similar manner. 
Thus we deduce by letting $\e\to 0$ that
\begin{align*}
\frac{1}{2} \int_0^{2\pi}& \hspace{-2mm} \int_0^{2 \pi} (v(x)-v(y)) (\phi(x)-\phi(y)) H(x-y)dydx\\
& + \int_0^{2\pi} v(x)\phi(x)dx = \int_0^{2\pi} f(x)\phi(x) dx.
\end{align*}
Since $\phi\in C^1(\R)$ is arbitrary this equality also holds for every $\phi\in X$ by density 
and this shows that $v$ is a weak solution to \eqref{eq-periodic}. 
Thus $v=u$ and the proof is concluded.
\end{proof}

\bigskip

The last piece in our puzzle concerns $C^\al$ regularity for the solution of 
\eqref{eq-global} when the right-hand side is bounded. It is an essential ingredient 
in order that bootstrapping works properly. 

\begin{lemma}\label{lema-reg-c-alfa}
Assume $f\in L^\infty(\R)$ is $2\pi$-periodic and let $u\in X$ be the 
unique solution of \eqref{eq-periodic}. Then $u \in C^\al(\R)$ for every 
$\al \in (0,2s)$ and 
$$
\| u\|_{C^\al(\R)} \le C \| f\|_{L^\infty(\R)}.
$$
\end{lemma}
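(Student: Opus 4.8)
The plan is to combine the interior Hölder estimate for $(-\De)^s$ with bounded right-hand side together with the equivalence, already established in Lemma \ref{lema-reg-c2-alfa} for smooth data and in Lemma \ref{lema-exist-global} for the global solvability, between the periodic problem \eqref{eq-periodic} and the global problem \eqref{eq-global}. Since $f\in L^\infty(\R)$ is not covered by Lemma \ref{lema-reg-c2-alfa} directly (that one needs $f\in C^\al$), I would first produce a \emph{bounded} solution of \eqref{eq-global} by approximation, then identify it with $u$, and finally read off the Hölder bound.

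First I would approximate. Take a sequence $f_n\in C^\infty(\R)$, $2\pi$-periodic, with $\|f_n\|_{L^\infty(\R)}\le \|f\|_{L^\infty(\R)}$ and $f_n\to f$ in, say, $L^2(0,2\pi)$ (mollification of the periodic function does this). Let $u_n\in X$ be the weak solution of \eqref{eq-periodic} with datum $f_n$; by Lemma \ref{lema-reg-c2-alfa}, $u_n\in C^{2s+\al}(\R)$ is $2\pi$-periodic and solves $(-\De)^s u_n+u_n=f_n$ in $\R$ classically, and by Lemma \ref{lema-exist-global}, $\|u_n\|_{L^\infty(\R)}\le \|f_n\|_{L^\infty(\R)}\le\|f\|_{L^\infty(\R)}$. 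Now apply the interior Hölder regularity estimate for the fractional Laplacian with bounded right-hand side — this is exactly Theorem 12.1 of \cite{MR2494809}, used already in the proof of Lemma \ref{lema-exist-global}, or equivalently the estimate in \cite{MR2270163}: there is $\al\in(0,2s)$ (any such $\al$, after a covering argument) and $C=C(\al,s)$ with
$$
\|u_n\|_{C^\al[-R,R]}\le C\big(\|u_n\|_{L^\infty(\R)}+\|f_n\|_{L^\infty(\R)}\big)\le C\|f\|_{L^\infty(\R)},
$$
uniformly in $n$ and, by periodicity, with $R$ fixed (say $R=2\pi$) this controls $\|u_n\|_{C^\al(\R)}$.

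Next I would pass to the limit. By the uniform $C^\al$ bound and Arzelà–Ascoli, up to a subsequence $u_n\to \tilde u$ locally uniformly, with $\tilde u\in C^\al(\R)$, $2\pi$-periodic, $\|\tilde u\|_{C^\al(\R)}\le C\|f\|_{L^\infty(\R)}$, and $\|\tilde u\|_{L^\infty(\R)}\le\|f\|_{L^\infty(\R)}$. It remains to identify $\tilde u$ with the weak solution $u\in X$ of \eqref{eq-periodic}. For this, test the weak formulations: for $\phi\in X$, $\langle u_n,\phi\rangle = \langle f_n,\phi\rangle_{X,X'}$, and since $f_n\to f$ in $L^2(0,2\pi)\hookrightarrow X'$, the right-hand sides converge; by Lemma \ref{lema-exist}, $\|u_n-u\| = \|f_n-f\|_{X'}\to 0$, so $u_n\to u$ strongly in $X$, hence in $L^2(0,2\pi)$, hence $\tilde u=u$ a.e., and therefore $u$ has the claimed representative in $C^\al(\R)$ with the stated bound. (Alternatively, one notes $u_n\to\tilde u$ uniformly forces, by stability of viscosity solutions à la Corollary 4.7 of \cite{MR2494809}, that $\tilde u$ solves \eqref{eq-global}, and then Lemma \ref{lema-reg-c2-alfa}-type uniqueness pins it down; but the $X$-convergence argument is cleaner.)

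The main obstacle is the uniform-in-$n$ interior Hölder estimate for $(-\De)^s v_n = g_n$ with $\|g_n\|_\infty + \|v_n\|_\infty$ bounded: one must make sure the cited regularity result (Theorem 12.1 of \cite{MR2494809}, or \cite{MR2270163}, \cite{MR3393266}) applies to our constant-coefficient operator with the constant \emph{independent of $n$} and, crucially, that it only uses the global $L^\infty$ bound on $v_n$ over all of $\R$ (which we have via Lemma \ref{lema-exist-global}), not any smallness or decay. Granting that — and it is precisely the estimate invoked in the proof of Lemma \ref{lema-exist-global} — the rest is routine approximation and the already-proven uniqueness/solvability facts. A secondary, minor point is upgrading the local $C^\al$ bound on $[-2\pi,2\pi]$ to a bound on all of $\R$, which is immediate from $2\pi$-periodicity of $u_n$.
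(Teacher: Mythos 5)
Your argument is correct and rests on the same pillars as the paper's proof (mollification, the smooth-data result of Lemma \ref{lema-reg-c2-alfa}, the uniform interior H\"older estimate of \cite{MR2270163}/\cite{MR2494809}, and a passage to the limit), but the execution differs in a way worth noting. The paper mollifies the solution $u$ itself, and therefore must first verify -- via the shifted-test-function and Fubini computation -- that $u_n=u*\rho_n$ is a weak solution of \eqref{eq-periodic} with datum $f_n=f*\rho_n$; it then obtains uniform convergence $u_n\to u$ in $L^\infty$ from the $L^p$ estimates of Lemma \ref{lema-reg-Lp}(b) before passing the bound to the limit. You instead mollify only the datum, define $u_n$ as the solution with datum $f_n$, get the uniform $L^\infty$ bound $\|u_n\|_{L^\infty}\le\|f\|_{L^\infty}$ directly from the comparison in Lemma \ref{lema-exist-global}, and identify the limit through linearity and the isometry $\|u_n-u\|=\|f_n-f\|_{X'}\le\|f_n-f\|_{L^2}$ of Lemma \ref{lema-exist}. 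By uniqueness the two families of approximations coincide, so the proofs are morally the same, but your identification step sidesteps both the commutation-of-mollification argument and any appeal to Lemma \ref{lema-reg-Lp}, which makes it somewhat shorter and more self-contained; the paper's route, on the other hand, yields the (occasionally useful) fact that mollification commutes with $\mathcal{L}$ on weak solutions, and its $L^\infty$ convergence argument does not rely on the Hilbert-space isometry. One small point to keep as stated in the paper: for the conclusion for \emph{every} $\al\in(0,2s)$ with an $n$-independent constant you should invoke Proposition 2.9 of \cite{MR2270163} (as the paper does) rather than the single exponent produced by Theorem 12.1 of \cite{MR2494809}; with that citation your uniform bound and the periodicity-based upgrade from $[-2\pi,2\pi]$ to all of $\R$ are exactly as needed.
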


\begin{proof}
Let $\{\rho_n\}_{n=1}^\infty$ be a sequence of regularizing kernels in the usual sense. 
Denote
$$
u_n(x)=\int u(x-z) \rho_n(z)dz, \quad f_n(x)=\int f(x-z) \rho_n(z)dz, 
$$
where the integrals are taken in $\R$. It is well-known that $u_n,f_n\in C^\infty(\R)$, 
 with the bounds $\| u_n\|_{L^\infty(\R)} \le \| u\|_{L^\infty(\R)}$, 
$\| f_n\|_{L^\infty(\R)} \le \| f\|_{L^\infty(\R)}$. Moreover, $f_n\to f$ in $L^q(0,2\pi)$ 
for every $q>1$. 

We claim that $u_n$ is a weak solution of 
$$
\mathcal{L} u_n + u_n =f_n \quad \hbox{in }(0,2\pi).
$$
To check this, take a $2\pi$-periodic function $\phi \in C^1(\R)$ and use as a test 
function in \eqref{eq-periodic} the shifted function $\phi(\cdot+z)$, where $z\in \R$. 
After changing variables in the resulting expression, we see that
\begin{align*}
\int_z^{2\pi+z} & \hspace{-2mm} \int_z^{2\pi+z} (u(x-z)-u(y-z))(\phi(x)-\phi(y)) H(x-y) dy dx \\
& = \int_z^{2\pi+z} (f(x-z)-u(x-z))\phi(x) dx.
\end{align*}
By periodicity, all the integrals can be taken in any interval of length $2\pi$, in particular 
in $[0,2\pi]$. Thus if we multiply by $\rho_n(z)$ and integrate with respect to $z$ in $\R$ we see that
\begin{align*}
\int \rho_n(z) & \left( \int_0^{2\pi} \hspace{-2mm} \int_0^{2\pi} (u(x-z)-u(y-z))(\phi(x)-\phi(y)) H(x-y) 
dy dx\right) dz \\
& = \int \rho_n(z) \left( \int_0^{2\pi} (f(x-z)-u(x-z))\phi(x) dx \right) dz.
\end{align*}
We can use Fubini's theorem with no worries since all integrals are absolutely convergent. Thus
$$
\int_0^{2\pi} \hspace{-2mm} \int_0^{2\pi} (u_n(x)-u_n(y))(\phi(x)-\phi(y)) H(x-y)dy dx
=\int_0^{2\pi} (f_n(x)-u_n(x))\phi(x) dx.
$$
Hence the claim follows.

Now, owing to Lemma \ref{lema-reg-c2-alfa}, we see that $u_n$ is a classical solution of the global 
problem
$$
(-\De)^s u_n + u_n = f_n \quad \hbox{in } \R,
$$
and we are in a position to apply the standard regularity for the fractional Laplacian. 
In particular, by Proposition 2.9 in \cite{MR2270163} we see that the $C^\al$ norm of 
$u_n$ can be estimated in terms of its $L^\infty$ norm and that of $f$, for every $\al \in (0,2s)$. 
Thus
\begin{equation}\label{eq-aux-6}
\|u_n \|_{C^\al(\R)} \le C(\|u_n\|_ {L^\infty(\R)} + \|f_n\|_ {L^\infty(\R)})\le 
C(\|u\|_ {L^\infty(\R)} + \|f\|_ {L^\infty(\R)}).
\end{equation}
Finally, we see by using Lemma \ref{lema-reg-Lp}, part (b) that
$$
\| u_n-u_m\| _{L^\infty(\R)} \le C \|f_n -f_m \|_{L^q(0,2\pi)}
$$
for every $q>\frac{1}{2s}$. Therefore $u_n\to u$ uniformly in $\R$, and we deduce that 
estimate \eqref{eq-aux-6} is also true for $u$ by passing to the limit. The proof is concluded.
\end{proof}

\bigskip

We are finally ready for proving of our first main result.

\medskip

\begin{proof}[Proof of Theorem \ref{th-equivalencia}]
The proof is more or less standard, based on an iteration and bootstrapping. The important tools 
in the procedure will be given by Lemmas \ref{lema-reg-Lp}, \ref{lema-reg-c2-alfa} and \ref{lema-reg-c-alfa}.

We will show first that $u\in L^\infty$. Notice that because of the inclusion of our space 
$X$ in $H^s(0,2\pi)$ we always have $u\in L ^\infty$ if $s>\frac{1}{2}$. Thus we may assume 
in this step that $s< \frac{1}{2}$, the case $s=\frac{1}{2}$ being handled similarly.

Since $u\in X$, we deduce $u\in L^{q_0}$, where $q_0=2^*_s$. On the other hand, we have 
$\mathcal{L}u+u =g$, where 
$$
g(x)=u(x)+f(u(x)),
$$ 
and by our hypothesis \eqref{eq-hipo} on $f$, we deduce $g\in L^\frac{q_0}{p}$. Observe that 
$$
\frac{q_0}{p}> \frac{2^*_s}{2^*_s-1}=\frac{2}{1+2s},
$$
so we are in a position to apply Lemma \ref{lema-reg-Lp}. 

If $\frac{q_0}{p}>\frac{1}{2s}$ we deduce that $u\in L^\infty$. Thus assume 
for the moment $\frac{q_0}{p}<\frac{1}{2s}$. Then $u\in L^{q_1}$, where 
$$
q_1=\frac{q_0}{p-2q_0 s}.
$$
It is not hard to see that $p<2^*_s-1$ implies $q_1>q_0$. This procedure can be iterated to obtain that, 
whenever  $\frac{q_j}{p}<\frac{1}{2s}$, then $u\in L^{q_{j+1}}$, where 
\begin{equation}\label{eq-def-qj}
q_{j+1}=\frac{q_j}{p-2q_j s},
\end{equation}
while $u\in L^\infty$ if  $\frac{q_j}{p}>\frac{1}{2s}$. Moreover, the sequence $\{q_j\}$ is 
increasing. 

We claim that $\frac{q_j}{p} \ge \frac{1}{2s}$ for some $j$. Otherwise, we would have that the 
sequence $\{q_j\}$ is bounded, which would imply $q_j\to q$. Passing to the limit in \eqref{eq-def-qj}, 
we see that 
$$
q=\frac{p-1}{2s}<\frac{2^*_s-2}{2s}=2^*_s,
$$
which is a contradiction with $q>q_0=2^*_s$.

Thus $\frac{q_j}{p}\ge \frac{1}{2s}$ for some $j$. If the inequality is strict, we have 
$u\in L^\infty$. Otherwise, we can use that $u\in L^{q_{j-1}+\de}$ for some small $\de$ 
which implies 
$$
u\in L^{\frac{q_{j-1}+\de}{p-2q_{j-1}s}}.
$$
However, this last exponent is greater than $\frac{p}{2s}$ for small $\de$ and we conclude as 
before.

To summarize, we have shown that $u \in L^\infty$. But then $g\in L^\infty$ and we can 
use Lemma \ref{lema-reg-c-alfa} to deduce that $u\in C^\al$ for every $\al \in (0,2s)$. 
Using the Lipschitz condition on $f$, this entails $g\in C^\al$ for every $\al\in (0,1)$. 
Thus Lemma \ref{lema-reg-c2-alfa} gives $u\in C^{2s+\al}$ for every $\al\in (0,1)$, and 
$u$ is a $2\pi$-periodic classical solution of \eqref{eq-semilineal-global}, as was to be proved.
\end{proof}

\bigskip

\begin{remark}\label{remark-control-normas} {\rm An inspection of the proof of Theorem \ref{th-equivalencia} reveals 
that the $C^{2s+\al}$ norms of the solution $u$ can be bounded in terms of the norm in $X$. 
In particular, when $f$ verifies $|f(t)|\le C |t|$ for some $C>0$ and all $t\in \R$, then 
$$
\| u\|_{C^{2s+\al}} \le C \| u \|.
$$
}\end{remark}

\bigskip

%%%%%%%%%%%%%%%%%%%%%%%%%%%%%%%%%%%%%%%%%%%%%%%%%%%%%%%%%%%%%%%%%
\section{An eigenvalue problem}\label{sect-eigen}
%%%%%%%%%%%%%%%%%%%%%%%%%%%%%%%%%%%%%%%%%%%%%%%%%%%%%%%%%%%%%%%%%
\setcounter{equation}{0}

In this section we will analyze the eigenvalue problem for the operator $\mathcal{L}$, 
which will be useful in the context of bifurcation theory. The eigenvalue problem will be:
\begin{equation}\label{eq-eigen-periodic}
\mathcal{L} u = \la u \quad \hbox{in } (0,2\pi).
\end{equation}
As a consequence of Theorem \ref{th-equivalencia}, weak solutions of \eqref{eq-eigen-periodic} 
correspond to classical $2\pi$-periodic solutions of 
\begin{equation}
(-\De)^s u  = \la u \quad \hbox{in } \R.
\tag{\ref{eq-eigen-global}}
\end{equation}
It is worth remarking that this eigenvalue problem has been implicitly considered in 
\cite{JCFNthesis}, where it was shown that $\la_k=k^{2s}$ are eigenvalues for every 
integer $k\ge 1$ (see Proposition 2.3.4 there). The corresponding eigenfunctions are 
$\cos(kx)$ and $\sin(kx)$. We will show indeed that, aside the trivial eigenvalue $\la_0=0$ 
with a constant eigenfunction, these are the only ones.

\begin{lemma}\label{lema-eigen}
The only eigenvalues of problem \eqref{eq-eigen-periodic} are $\la_0=0$, 
with a constant eigenfunction and $\la_k = k^{2s}$, $k=1,2,\ldots$, with 
an eigenspace spanned by $\cos(kx)$ and $\sin(kx)$. 
\end{lemma}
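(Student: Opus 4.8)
The plan is to diagonalize $\mathcal{L}$ using the Fourier basis $\{1,\cos(kx),\sin(kx)\}_{k\ge 1}$ of $L^2(0,2\pi)$ and to show that this basis already exhausts the spectrum, so there can be no other eigenvalues. The first step is to recognize that the trigonometric polynomials are smooth $2\pi$-periodic functions, hence by Lemma \ref{lema-operador} (or by the known computation in \cite{JCFNthesis}) one has $\mathcal{L}(\cos(kx))=k^{2s}\cos(kx)$ and $\mathcal{L}(\sin(kx))=k^{2s}\sin(kx)$ for $k\ge 1$, and $\mathcal{L}(1)=0$. Equivalently, working directly with the weak form \eqref{producto_interior}, the bilinear form $\frac12\int_0^{2\pi}\!\!\int_0^{2\pi}(u(x)-u(y))(v(x)-v(y))H(x-y)\,dy\,dx$ is diagonalized by the Fourier basis, with eigenvalues $\mu_0=0$, $\mu_k=k^{2s}$; this can be seen from the expansion of $H$ or, more cleanly, by passing through Lemma \ref{lema-operador} to the fractional Laplacian on $\R$ and using $(-\De)^s e^{ikx}=|k|^{2s}e^{ikx}$. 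Since the Fourier basis is complete in $L^2(0,2\pi)$ and these are orthogonal in $X$ as well, we obtain a spectral decomposition of $\mathcal{L}$.

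The second step converts this into a statement about eigenvalues. Suppose $\lambda\in\R$ is an eigenvalue of \eqref{eq-eigen-periodic} with a weak eigenfunction $u\in X\setminus\{0\}$. Expand $u=\sum_{k\ge 0}(a_k\cos(kx)+b_k\sin(kx))$ in $L^2(0,2\pi)$; since $u\in X$ the series also converges in $X$ and $\sum_k k^{2s}(a_k^2+b_k^2)<\infty$. Testing the weak equation $\langle u,\varphi\rangle - \int_0^{2\pi}u\varphi = \lambda\int_0^{2\pi}u\varphi$ against each basis function $\varphi$ (which lies in $X$), and using that the basis functions are eigenfunctions of the bilinear form, one gets $\mu_k a_k = \lambda a_k$ and $\mu_k b_k = \lambda b_k$ for every $k$. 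Hence, for each $k$, either $\mu_k=\lambda$ or $a_k=b_k=0$. Since $u\not\equiv 0$, there is at least one $k$ with $\mu_k=\lambda$; as the $\mu_k$ are pairwise distinct ($\mu_0=0$, $\mu_k=k^{2s}$ strictly increasing in $k$), exactly one such $k$ occurs. If $k=0$ then $\lambda=0$ and $u$ is constant; if $k\ge 1$ then $\lambda=k^{2s}$ and $u\in\mathrm{span}\{\cos(kx),\sin(kx)\}$. This proves both that $\lambda_0=0$ and $\lambda_k=k^{2s}$ are the only eigenvalues and that the eigenspaces are exactly as claimed.

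The only genuine point requiring care — and the step I expect to be the main technical obstacle — is the justification that testing against the Fourier basis is legitimate and that it fully determines $u$, i.e. that the weak formulation in $X$ really does decouple over Fourier modes. This reduces to showing that $\{1,\cos(kx),\sin(kx)\}$ is not merely an orthogonal set in $X$ but that its closed span is all of $X$, so that a function in $X$ orthogonal (in the $X$ inner product) to all of them must vanish. This follows because orthogonality in $\langle\cdot,\cdot\rangle$ to $\cos(kx)$ and $\sin(kx)$ forces $(1+\mu_k)$ times the corresponding Fourier coefficient to be zero, hence all Fourier coefficients vanish, hence the function is zero in $L^2$ and therefore in $X$. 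With density of trigonometric polynomials in $X$ in hand (which one can extract from the definition of $X$ as a closure of $C^1$ periodic functions together with the $H^s$ embedding), the decoupling argument above goes through and the lemma follows. Alternatively, and perhaps more transparently, one may invoke Theorem \ref{th-equivalencia}: a weak eigenfunction is a classical $2\pi$-periodic solution of $(-\De)^s u=\lambda u$ in $\R$, so one may take its Fourier series, apply $(-\De)^s$ termwise (justified by the smoothness and decay of the coefficients), and match modes directly, again concluding $\lambda\in\{0\}\cup\{k^{2s}:k\ge1\}$ with the stated eigenfunctions.
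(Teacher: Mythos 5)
Your proof is correct, but it takes a genuinely different route from the paper. The paper fixes a (possibly rough) eigenfunction, bootstraps it to $C^\infty(\R)$ via the regularity theory, observes that every derivative of an eigenfunction is again an eigenfunction, and uses compactness of the solution operator $K$ (finite multiplicity) to produce a linear constant-coefficient ODE satisfied by $u$; classifying the periodic solutions of that ODE and matching modes gives $\la=k^{2s}$, while the case $\la_0=0$ is handled separately by a Liouville theorem for bounded $s$-harmonic functions. You instead diagonalize the bilinear form over the trigonometric system and decouple the weak equation mode by mode, which needs no regularity theory, no compactness, no ODE classification and no Liouville theorem (the constancy of eigenfunctions for $\la=0$ falls out of the decoupling). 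The price is the identity $\left< u,\cos(jx)\right>=(1+j^{2s})\int_0^{2\pi}u\cos(jx)$ (and its sine analogue) for \emph{every} $u\in X$, which is exactly the orthogonality relations \eqref{eq-ortogonalidad} extended to arbitrary elements of $X$; note that the paper states \eqref{eq-ortogonalidad} as a consequence of this very lemma, so to avoid circularity you must derive it independently, as you indicate: $\mathcal{L}\cos(kx)=k^{2s}\cos(kx)$ pointwise (Lemma \ref{lema-operador}, or \cite{JCFNthesis}), the Fubini/principal-value manipulation of Lemma \ref{lema-reg-c2-alfa} converting this classical relation into the weak one against $C^1$ periodic test functions, and density of such functions in $X$ (which is built into the definition of $X$). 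With that identity in hand your decoupling is airtight; in fact you only need completeness of the trigonometric system in $L^2(0,2\pi)$ to conclude $u\in\mathrm{span}\{\cos(kx),\sin(kx)\}$, so the "closed span equals $X$" step you flag as the main obstacle is not strictly necessary (though your argument for it is also valid). The identity you rely on is one the paper itself uses later, in the proof of Lemma \ref{lema-eigen-variacional}, so your approach is fully compatible with the paper's toolkit and arguably more self-contained; the paper's argument, on the other hand, generalizes more readily to situations where an explicit diagonalizing basis is not available.
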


\begin{proof}
Problem \eqref{eq-eigen-periodic} is clearly equivalent to $u=(\la+1)Ku$, where 
$K$ is the solution operator given in \eqref{eq-operador-K}. This shows that 
$\la\ne -1$ and 
$$
Ku =\frac{1}{\la+1} u.
$$
The compactness of $K$ implies that every eigenvalue is of finite multiplicity. 

Next observe that a constant eigenfunction $u$ can only be associated to the eigenvalue $\la_0=0$.
Conversely, periodic eigenfunctions associated to $\la_0$ are necessarily constant, since they are 
bounded $s-$harmonic functions (cf. \cite{MR3477075}). Hence we may restrict in what follows to 
nonconstant eigenfunctions. 

Thus let $u$ be a $2\pi$-periodic nonconstant eigenfunction associated to an eigenvalue $\la$. By bootstrapping and 
regularity theory for the fractional Laplacian (cf. \cite{MR2270163}), we have $u\in C^\infty(\R)$. 
The same regularity, together  with the periodicity of $u$ implies that $u'$ is bounded. Thus $u'$ is 
also an eigenfunction associated to $\la$. Proceeding inductively, we see that derivatives of 
$u$ up to all orders are also eigenfunctions associated to $\la$. Since the eigenspace associated to $\la$ is 
finite dimensional, there exists an integer $m$ and real numbers $c_0,c_1,\ldots,c_{m-1}$, not all 
zero, such that 
$$
u^{(m)}+c_{m-1} u^{(m-1)}+\cdots + c_1 u'+c_0 u=0 \qquad \hbox{in }\R.
$$
This means that $u$ is a solution of a homogeneous, linear ordinary differential equation with 
constant coefficients. It is well-known that the only periodic nonconstant solutions of this equation 
are of the form
$$
u= \sum_{i=1}^n a_i \cos(\mu_i x) + b_i \sin (\mu_i x),
$$
where $2n\le m$, $a_i,b_i\in \R$ and the numbers $\mu_i$ can be chosen to be positive. 
Substituting this expression into \eqref{eq-eigen-global}, we see that 
$$
\sum_{i=1}^n a_i \mu_i^{2s} \cos(\mu_i x) + b_i \mu_i^{2s} \sin (\mu_i x)=
\la \sum_{i=1}^n a_i \cos(\mu_i x) + b_i \sin (\mu_i x),
$$
which readily implies $n=1$ and $\mu_1^{2s}=\la$. The $2\pi$-periodicity of $u$ 
then yields $\mu_1= k$ for a nonnegative integer $k$, so that $\la=k^{2s}$ and 
$u$ is a linear combination of $\cos(kx)$ and $\sin(kx)$, as we wanted to show. The 
proof is concluded.
\end{proof}

\begin{remark}{\rm
A {stronger statement than that of Lemma \ref{lema-eigen}} is actually available by 
using the Fourier transform. Namely, the only solutions $u$ of \eqref{eq-eigen-global} 
verifying
\begin{equation}\label{eq-cond-integrab1}
\int_{-\infty}^\infty \frac{|u(x)|}{(1+|x|)^{1+2s}} dx<+\infty
\end{equation}
are $\cos(\la^\frac{1}{2s} x)$ and $\sin(\la^\frac{1}{2s}x)$, provided that $\la>0$. 
Observe in passing that condition \eqref{eq-cond-integrab1} is necessary in order that 
$u$ be a solution of \eqref{eq-eigen-global} in whatever sense, and that no boundedness 
nor periodicity assumptions are placed on $u$. We sketch a proof next.

Condition \eqref{eq-cond-integrab1} implies that $u$ defines a tempered distribution, so 
that its Fourier transform is well-defined. After transforming equation \eqref{eq-eigen-global}, 
we arrive at 
\begin{equation}\label{eq-aux-8}
|\xi|^{2s} \hat u =\la \hat u
\end{equation}
in the sense of distributions. This implies that $\la\ge 0$ and that the support of 
$\hat u$ is contained in $\{-\la^\frac{1}{2s},\la^\frac{1}{2s}\}$. Arguing as in 
Section 3 of \cite{MR3348929}, we can show that actually \eqref{eq-aux-8} is equivalent to 
$|\xi|^2 \hat u=\la^{1/s} \hat u$, which in turn is equivalent to $-u''=\la^{1/s} u$ in the 
sense of distributions. Hence the statement.
}\end{remark}

\bigskip

Using Lemma \ref{lema-eigen} and the usual orthogonality relations between sines 
and cosines, it is not difficult to deduce that 
\begin{equation}\label{eq-ortogonalidad}
\begin{array}{c}
\left< \cos(ix),\sin(jx)\right> = 0, \quad i,j\in \mathbb{N}\\[.25pc]
\left< \cos(ix),\cos(jx)\right> = (i^{2s}+1) \de_{ij} \pi, \quad (i,j)\ne (0,0)\\[.25pc]
\left< \sin(ix),\sin(jx)\right> = (i^{2s}+1) \de_{ij} \pi, \quad (i,j)\ne (0,0),
\end{array}
\end{equation}
where $\de_{ij}$ is Kronecker's delta and $\langle\cdot,\cdot \rangle$ is the inner product defined in \eqref{producto_interior}. Thus the following variational 
characterization, which will be handy in Section \ref{sect-variacional} below, is to be 
expected.

\begin{lemma}\label{lema-eigen-variacional}
Let {$k\in\mathbb{N}$} and denote
$$
F_k=\left\{ u\in X: \int_0^{2\pi} \hspace{-2mm} u(x) \cos(jx)dx = \int_0^{2\pi} \hspace{-2mm} 
u(x) \sin(jx)dx = 0, \ j={0},\ldots,k-1\right\}.
$$
Then 
\begin{equation}\label{eq-caract-variacional}
\inf_{u\in F_k} \frac{\|u\|^2}{\int_0^{2\pi} u^2} = k^{2s}+1.
\end{equation}
\end{lemma}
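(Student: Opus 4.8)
The plan is to expand an arbitrary $u\in F_k$ in its Fourier series and use the orthogonality relations \eqref{eq-ortogonalidad} to compute both $\|u\|^2$ and $\int_0^{2\pi}u^2$ in terms of the Fourier coefficients; the constraint defining $F_k$ will simply kill the low-frequency modes, leaving only frequencies $j\ge k$, and then the inequality $\|u\|^2\ge (k^{2s}+1)\int_0^{2\pi}u^2$ drops out term by term. First I would write, for $u\in X$, its Fourier expansion $u(x)=\frac{a_0}{2}+\sum_{j\ge 1}(a_j\cos(jx)+b_j\sin(jx))$ with convergence at least in $L^2(0,2\pi)$; since $u\in X\subset H^s(0,2\pi)$ the series in fact converges in $X$, which is what lets me evaluate $\|u\|^2$ by \eqref{eq-ortogonalidad}. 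The membership $u\in F_k$ is exactly the statement that $a_j=b_j=0$ for $j=0,\dots,k-1$ (note $j=0$ forces $a_0=0$). Hence
\begin{align*}
\|u\|^2 &= \sum_{j\ge k}(j^{2s}+1)\,(a_j^2+b_j^2)\,\pi,\\
\int_0^{2\pi}u^2 &= \sum_{j\ge k}(a_j^2+b_j^2)\,\pi.
\end{align*}
Since $j^{2s}+1\ge k^{2s}+1$ for every $j\ge k$, this gives $\|u\|^2\ge (k^{2s}+1)\int_0^{2\pi}u^2$ for all $u\in F_k$, so the infimum in \eqref{eq-caract-variacional} is at least $k^{2s}+1$.

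For the reverse inequality, I would simply test with the admissible function $u(x)=\cos(kx)$ (or $\sin(kx)$), which lies in $F_k$ by the orthogonality of distinct trigonometric functions, and for which \eqref{eq-ortogonalidad} gives $\|u\|^2=(k^{2s}+1)\pi$ and $\int_0^{2\pi}u^2=\pi$, so the ratio equals exactly $k^{2s}+1$. Combining the two bounds yields \eqref{eq-caract-variacional}, and incidentally shows the infimum is attained.

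The only genuine technical point — the "main obstacle," though it is mild — is justifying that $\|u\|^2$ may be computed by summing the Fourier series term by term, i.e.\ that the bilinear form defining $\|\cdot\|$ is continuous on $X$ and the partial sums converge to $u$ in the $X$-norm. This follows because the trigonometric system is, up to the normalizing constants $(j^{2s}+1)^{1/2}$, an orthogonal basis of $X$: the relations \eqref{eq-ortogonalidad} show orthogonality, and density of the span follows from the fact that $2\pi$-periodic $C^1$ functions are dense in $X$ by definition together with the classical density of trigonometric polynomials in $C^1_{\mathrm{per}}$ (in the relevant norm). Once one knows $\{(j^{2s}+1)^{-1/2}\cos(jx),\,(j^{2s}+1)^{-1/2}\sin(jx)\}$ is an orthonormal basis of $X$, Parseval's identity gives the displayed formula for $\|u\|^2$ directly, and the computation of $\int_0^{2\pi}u^2$ is the ordinary Parseval identity in $L^2(0,2\pi)$. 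Everything else is routine.
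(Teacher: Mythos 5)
Your argument is correct, but it takes a genuinely different route from the paper. The paper proves the lemma by the direct method: it shows the infimum $\mu_k$ is attained (minimizing sequence, weak compactness, lower semicontinuity), derives the Lagrange-multiplier identity $\left<u,\varphi\right>=\mu_k\int_0^{2\pi}u\varphi$ first for $\varphi\in F_k$ and then for all $\varphi\in X$ via the orthogonality relations \eqref{eq-ortogonalidad}, and finally invokes the full eigenvalue classification of Lemma \ref{lema-eigen} to conclude $\mu_k=k^{2s}+1$. You instead prove a Parseval identity in $X$: orthogonality of the trigonometric system in $\left<\cdot,\cdot\right>$ plus density of trigonometric polynomials in $X$ (via Fej\'er-type approximation in $C^1_{\mathrm{per}}$ and the bound $\|u\|\le C\|u\|_{C^1}$ for periodic $C^1$ functions, which holds because periodicity tames the singularities of $H$ near $|x-y|=0$ and $|x-y|=2\pi$) makes $\{(\pi(j^{2s}+1))^{-1/2}\cos(jx),\,(\pi(j^{2s}+1))^{-1/2}\sin(jx)\}$ an orthonormal basis, after which the bound is termwise and the infimum is attained at $\cos(kx)$. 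This buys you more (attainment, the explicit spectral form of $\|\cdot\|^2$) and avoids both the compactness argument and the hard half of Lemma \ref{lema-eigen} (that there are no eigenvalues other than $k^{2s}$); the price is the completeness claim, which the paper never states and which you rightly identify as the real technical point. One step you should make explicit: membership in $F_k$ is an $L^2$-orthogonality condition, so to conclude that the $X$-expansion of $u\in F_k$ has no modes below $k$ you need the identity $\left<u,\cos(jx)\right>=(j^{2s}+1)\int_0^{2\pi}u\cos(jx)$ (and likewise for sines), i.e.\ the weak eigenfunction property of $\cos(jx)$, or equivalently the observation that $X$-convergence of the expansion implies $L^2$-convergence and hence the $X$-coefficients coincide with the classical Fourier coefficients; this is exactly the identity the paper itself uses when it upgrades the Euler--Lagrange equation from $\varphi\in F_k$ to all $\varphi\in X$, so it is available, but in your write-up it is only implicit.
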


\begin{proof}
Denote 
$$
\mathcal{I}(u)=  \frac{\|u\|^2}{\int_0^{2\pi} u^2}
$$
and let $\mu_k=\inf_{u\in F_k} \mathcal{I}(u)$. It is easily seen that 
this infimum has to be achieved at some $u \in {F_k}$. 

Indeed, if $\{u_n\}_{n=1}^\infty$ is a 
minimizing sequence, which can be assumed by normalizing to have $L^2$ norm equal to one, 
then $\|u_n\|$ is bounded. It follows after passing to a subsequence that $u_n\rightharpoonup u$ 
weakly in $X$ for some function $u\in {F_k}$ with $L^2$ norm equal to 1. But then by the usual 
properties of weak convergence:
$$
\| u\|^2 \le \liminf_{n\to +\infty} \| u_n \|^2=\mu_k.
$$
Thus $\mathcal{I}(u)=\mu_k$, and the infimum is attained. Therefore $u$ is a critical point 
of the functional $\mathcal{I}$, so that 
\begin{equation}\label{yours}
\left< u, \varphi\right> = \mu_k \int_0^{2\pi} u \varphi, \quad \hbox{for every } \varphi \in F_k.
\end{equation}
This equality is indeed valid for every $\varphi \in X$ in the standard way. To 
see it, denote $\phi_j(x)=\cos(jx)$, $\bar\phi_j(x)=\sin(jx)$ and write 
$$
\varphi=\frac{a_0}{2} + \sum_{j=1}^{k-1} \left(a_j \phi_j+b_j \bar\phi_j\right)+ \varphi_2
$$ 
where $\varphi_2\in F_k$. Since
$$
0=\left< u,a_j \phi_j+b_j\bar\phi_j \right> =(j^{2s}+1)\int_0^{2\pi} u (a_j \phi_j+b_j\bar\phi_j )
$$
for $j=0,\ldots,k-1$, by \eqref{yours} it follows that
$$
\left<u ,\varphi\right> =\left<u ,\varphi_2\right> = \mu_k \int_0^{2\pi} u \varphi_2 =
\mu_k \int_0^{2\pi} u \varphi.
$$
This means that $\mu_k-1$ is an eigenvalue of problem \eqref{eq-eigen-periodic} with $u$ as an 
eigenfunction. By Lemma \ref{lema-eigen} we deduce that $\mu_k=j^{2s}+1$ for some $j\ge 0$ 
and that $u$ is a linear combination of $\cos(jx)$ and $\sin(jx)$. Since $u \in F_k$, we deduce 
$j\ge k$ and so the fact that $\mu_k$ is an infimum gives $j=k$, that is $\mu_k=k^{2s}+1$, as 
was to be shown. This proves \eqref{eq-caract-variacional}.
\end{proof}

\bigskip

%Finally, we adapt to our setting the classical Fredholm alternative, which deals with the 
%solvability in $X$ of the resonant problem 
%\begin{equation}\label{eq-fredholm}
%\mathcal{L} u = k^{2s} +h \quad \hbox{in }(0,2\pi),
%\end{equation}
%where $k$ is a nonnegative integer.
%
%\begin{lemma}\label{lema-fredholm}
%Let $k$ be a nonnegative integer and $h\in L^q(0,2\pi)$ for some $q\ge \frac{2}{1+2s}$. 
%Then problem \eqref{eq-fredholm} admits a weak solution if and only if 
%$$
%\int_0^{2\pi } h(x) \cos(kx) dx = \int_0^{2\pi } h(x) \sin(kx) dx =0.
%$$
%\end{lemma}
%
%\begin{proof}
%Problem \eqref{eq-fredholm} is equivalent to 
%\begin{equation}\label{eq-aux-7}
%Ku = \frac{1}{k^{2s}+1} u - \frac{1}{k^{2s}+1} Kh
%\end{equation}
%in $X$. The classical Fredholm alternative gives that the solvability of \eqref{eq-aux-7} 
%is equivalent to $\left<Kh,\phi\right>=0$ for every eigenfunction $\phi$ associated 
%to $k^{2s}$. Taking into account the definition of $K$, this last condition is 
%equivalent to 
%$$
%\int_0^{2\pi} h(x) \phi(x) dx=0
%$$
%for every eigenfunction $\phi$ associated to $k^{2s}$. The proof now follows from Lemma \ref{lema-eigen}.
%\end{proof}
%
%\bigskip

\bigskip

%%%%%%%%%%%%%%%%%%%%%%%%%%%%%%%%%%%%%%%%%%%%%%%%%%%%%%%%%%%%%%%%%
\section{Existence of periodic solutions: bifurcation}\label{sect-bifur}
%%%%%%%%%%%%%%%%%%%%%%%%%%%%%%%%%%%%%%%%%%%%%%%%%%%%%%%%%%%%%%%%%
\setcounter{equation}{0}

Our main interest in this section is to prove Theorem \ref{th-bifurcacion}. It will 
be a consequence of a theorem independently due to B\"ohme \cite{MR0312348} and Marino 
\cite{MR0348570} (see also Theorem 11.4 in \cite{MR845785}). Therefore we will assume 
throughout that $f$ verifies the hypotheses in its statement. 

First of all let $\tilde f$ be an arbitrary truncation of $f$ outside 
the interval $[-1,1]$ which grows at most linearly. Since $f'(0)=0$, we can find a positive constant $C$ such that 
$|\widetilde f(t)|\le C|t|$. 
Let us see now that finding classical solutions of problem \eqref{eq-bifur} with small 
$L^\infty$ norm is equivalent to finding weak solutions of 
\begin{equation}\label{eq-rabinowitz}
\mu( \mathcal{L} v +v) = v+\tilde f(v), \quad v\in X,
\end{equation}
with small $\| \cdot \|$ norm. 

\begin{lemma}\label{lema-truncamiento}
For every $\mu_0>0$, there exists $C>0$ such that, if $v\in X$ is a weak solution of \eqref{eq-rabinowitz} with 
$\| v \|\le \e$ and some $\mu \ge \mu_0$, then there exists a classical solution $u$ of \eqref{eq-bifur} 
with $\la = 1-\mu$ and $f$ replaced by $\tilde f$, which is periodic with period $2\pi \mu^{-\frac{1}{2s}}$ 
and verifies $\| u \|_{L^\infty(\R)} \le C\e$. Thus if $\e$ is small enough, $u$ is also a solution 
of problem \eqref{eq-bifur}.
\end{lemma}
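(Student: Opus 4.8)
The plan is to promote the weak solution $v$ of \eqref{eq-rabinowitz} to a classical, $2\pi$-periodic solution of a \emph{global} equation by invoking Theorem \ref{th-equivalencia}, then to absorb the factor $\mu$ into a dilation using the scaling covariance of $(-\De)^s$, and finally to control $\|u\|_{L^\infty(\R)}$ uniformly in $\mu$ by means of Remark \ref{remark-control-normas}. The last sentence of the statement will then follow by a smallness argument.

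\textit{Step one (regularity).} The weak form of \eqref{eq-rabinowitz} is $\langle v,\phi\rangle=\frac1\mu\int_0^{2\pi}(v+\tilde f(v))\phi$ for every $\phi\in X$; that is, $v$ is a weak solution of $\mathcal{L}v=\hat f(v)$ in $(0,2\pi)$ with
\[
\hat f(t):=\Big(\tfrac1\mu-1\Big)t+\tfrac1\mu\,\tilde f(t).
\]
Since $\mu\ge\mu_0$ and $|\tilde f(t)|\le C|t|$, the function $\hat f$ is locally Lipschitz and satisfies $|\hat f(t)|\le C_0|t|$ with $C_0=C_0(\mu_0)$; in particular it verifies hypotheses $(H)$. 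Theorem \ref{th-equivalencia} then gives $v\in C^{2s+\al}(\R)$ for every $\al\in(0,1)$, and $v$ is a $2\pi$-periodic classical solution of $(-\De)^s v=\hat f(v)$ in $\R$, equivalently
\[
\mu\,(-\De)^s v=(1-\mu)v+\tilde f(v)\quad\text{in }\R.
\]

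\textit{Step two (scaling and periodicity).} Put $\gamma:=\mu^{\frac{1}{2s}}$ and $u(x):=v(\gamma x)$. From the homogeneity relation $(-\De)^s\big(v(\gamma\,\cdot)\big)(x)=\gamma^{2s}\big((-\De)^s v\big)(\gamma x)$, the previous identity evaluated at $\gamma x$, and $\gamma^{2s}=\mu$, one obtains
\[
(-\De)^s u(x)=\gamma^{2s}\big((-\De)^s v\big)(\gamma x)=\mu\cdot\tfrac1\mu\big[(1-\mu)u(x)+\tilde f(u(x))\big]=(1-\mu)u(x)+\tilde f(u(x)),
\]
so $u$ is a classical solution of \eqref{eq-bifur} with $\la=1-\mu$ and $f$ replaced by $\tilde f$. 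Since $v$ is $2\pi$-periodic, $u$ is periodic with period $2\pi\mu^{-\frac{1}{2s}}$, and $\|u\|_{L^\infty(\R)}=\|v\|_{L^\infty(\R)}$ because $u$ is merely a rescaling of $v$. Applying Remark \ref{remark-control-normas} to $\mathcal{L}v=\hat f(v)$ (whose nonlinearity satisfies exactly the linear bound required there, with constant depending only on $\mu_0$) yields $\|v\|_{C^{2s+\al}(\R)}\le C\|v\|\le C\e$, and hence $\|u\|_{L^\infty(\R)}\le C\e$ with $C=C(\mu_0)$.

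\textit{Step three (removing the truncation).} If $\e$ is small enough that $C\e<1$, then $|u(x)|<1$ for all $x$, and since $\tilde f=f$ on $(-1,1)$ we conclude $(-\De)^s u=(1-\mu)u+f(u)$ in $\R$, i.e. $u$ solves \eqref{eq-bifur}. The only point requiring any care is keeping every constant independent of $\mu$ and of $v$; this is forced by the restriction $\mu\ge\mu_0$ together with the linear bound on $\tilde f$, so I do not anticipate a genuine obstacle — the argument is essentially Theorem \ref{th-equivalencia} plus the elementary dilation identity for $(-\De)^s$.
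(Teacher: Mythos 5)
Your argument is correct and follows essentially the same route as the paper's proof: apply Theorem \ref{th-equivalencia} (the truncated nonlinearity, after dividing by $\mu\ge\mu_0$, is locally Lipschitz with linear growth) to pass to the global classical equation, use Remark \ref{remark-control-normas} for the bound $\|v\|_{L^\infty}\le C\|v\|\le C\e$ with $C=C(\mu_0)$, rescale by $\mu^{1/(2s)}$ via the homogeneity of $(-\De)^s$, and remove the truncation once $C\e\le 1$. No gaps.
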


\begin{proof}
Observe that by Remark \ref{remark-control-normas}, there exists $C>0$ such that $\| v\|_{L^\infty(0,2\pi)} 
\le C\|v\| \le C\e$, where $C$ depends on a lower bound for $\mu$, i. e. on $\mu_0$.

On the other hand, since $\tilde f$ is subcritical, 
by Theorem \ref{th-equivalencia}, problem \eqref{eq-rabinowitz} is equivalent to 
$$
(-\De)^s v= \frac{1-\mu}{\mu} v +\frac{1}{\mu} \tilde f(v) \quad \hbox{in } \R.
$$
Letting $u(x)= v(\mu^\frac{1}{2s} x)$ and $\la=1-\mu$, we see that $u$ is a periodic 
solution of \eqref{eq-bifur} (with $f$ replaced by $\tilde f$) with period $2\pi \mu^{-\frac{1}{2s}}$, and 
$\| u\|_{L^\infty(\R)}= \| v\|_{L^\infty(0,2\pi)} \le C\e$. Moreover, when $\e$ 
is small enough, we also have $\| u\|_{L^\infty(\R)} \le 1$, hence $u$ is a solution of 
the original problem \eqref{eq-bifur}. The proof is concluded.
\end{proof}

\bigskip

\begin{proof}[Proof of Theorem \ref{th-bifurcacion}]
Let $F(t)=\int_0^t \tilde f(\tau)d\tau$, and for $u\in X$, consider the functional 
$$
I(u)= \frac{1}{2} \int_0^{2\pi} u^2 + \int_0^{2\pi} F(u).
$$
Observe that $I$ is well-defined on $X$ by the subcriticality of $\tilde f$. 
We introduce the operators $L:X\to X$ and $G:X\to X$ by
\begin{align*}
& \left< Lu,\varphi\right> = \int_0^{2\pi} u\varphi \\
& G(u) \varphi = \int_0^{2\pi} \tilde f(u) \varphi.
\end{align*}
Next, denote by $D$ the duality mapping between $X'$ and $X$. That is, for 
$h\in X'$, let $Dh$ be the unique $z\in X$ such that 
$$
\left< z, u\right> = h(u), \quad \hbox{for every } u \in X.
$$
Then it can be easily shown that 
$$
DI'(u)=Lu +G(u).
$$
Moreover, equation \eqref{eq-rabinowitz} is equivalent to $Lu+G(u)=\mu u$. In order to be able 
to apply Theorem 11.4 in \cite{MR845785} we need to have:

\begin{itemize}

\item[(a)] An isolated eigenvalue $\mu_0$ of $L$ with finite multiplicity.

\medskip
\item[(b)] $G(u)=o(\|u\|)$ as $\| u \|\to 0$.
\end{itemize}

As for (a), notice that $\mu_0$ is an eigenvalue of $L$ in $X$ if and only if 
$\frac{1}{\mu_0}-1$ is an eigenvalue of \eqref{eq-eigen-periodic}, that is, by Lemma \ref{lema-eigen},
$\mu_0=(1+k^{2s})^{-1}$ for some $k\ge 0$. Moreover, for $k\ge 1$, $\mu_0$ has 
multiplicity two.

To show (b), choose $q>1$ such that 
$$
\cfrac{2}{1+2s} \le q < \frac{2}{1-2s}=2^*_s,
$$
and let $\theta>1$ be close to 1, so that $q\theta$ verifies the same inequalities. 
Choose $\e>0$. Then there exists $\de>0$ such that $|\tilde f(t)|\le \e|t|$ if $|t|\le \de$, 
while $|\tilde f(t)| \le C(\delta)|t|^\theta$ if $|t|\ge \de$. Then 
\begin{align*}
\int_0^{2\pi} |\tilde f(u)|^q & = \int_{|u|\le \de}  |\tilde f(u)|^q +\int_{|u|>\de}  |f(u)|^q \\
& \le \e^q \int_0^{2\pi} |u|^q + C(\de) \int_0^{2\pi} |u|^{q\theta} \\[0.25pc]
& \le C (\e^q \| u\|^q + C(\de) \| u\|^{q\theta}).
\end{align*}
Now choose an arbitrary $\varphi\in X$. We have
\begin{align*}
\frac{|G(u)\varphi|}{\| u\|} & \le \frac{1}{\| u\|} \int_0^{2\pi} |\tilde f(u)| |\varphi| 
\le \frac{1}{\| u\|} \|\tilde f(u) \|_{L^q(0,2\pi)} \| \varphi \|_{L^{q'}(0,2\pi)}\\[0.25pc]
& \le C \frac{1}{\| u\|} \| f(u)\|_{L^q(0,2\pi)} \| \varphi \| \le C  (\e^q + C(\de) 
\| u\|^{q(\theta-1)})^\frac{1}{q} \| \varphi \|.
\end{align*}
This shows that 
$$
\limsup_{\| u \| \to 0} \frac{1}{\| u \|} \left(\sup_{\varphi \in X} 
\frac{|G(u)\varphi|}{\| \varphi\|} \right) =0,
$$
which shows (b). 

To summarize, we have shown that Theorem 11.4 in \cite{MR845785} can be applied 
to give solutions of \eqref{eq-rabinowitz} with small $\| \cdot \|$ norm and 
$\mu$ close to $(1+k^{2s})^{-1}$ for every $k\ge 1$. The conclusion of the theorem 
is then provided by Lemma \ref{lema-truncamiento}. 
\end{proof}

\bigskip

\begin{remark}\label{remark-crandall-rabinowitz}
{\rm According to the classical result by Crandall and Rabinowitz (\cite{MR0288640}), bifurcation from 
the branch of trivial solutions also takes place from the first eigenvalue $\la_0=0$ of 
\eqref{eq-eigen-periodic}, which is simple.  However, it can be checked that in most cases the 
solutions so obtained are constant, therefore of no interest to us.}
\end{remark}

\bigskip

%%%%%%%%%%%%%%%%%%%%%%%%%%%%%%%%%%%%%%%%%%%%%%%%%%%%%%%%%%%%%%%%%
\section{Variational methods}\label{sect-variacional}
%%%%%%%%%%%%%%%%%%%%%%%%%%%%%%%%%%%%%%%%%%%%%%%%%%%%%%%%%%%%%%%%%
\setcounter{equation}{0}

In this section, we will see how the use of variational methods in the space $X$ leads 
to the proof of Theorem \ref{th-variacional}. According to Theorem \ref{th-equivalencia}, 
it suffices to find weak solutions $u \in X$ of the problem
\begin{equation}\label{eq-potencia-impar-periodico}
\mathcal{L} u = \la u + |u|^{p-1} u \quad \hbox{in } (0,2\pi).
\end{equation}
The proof depends on the sign of the parameter $\la$, so we will consider separately the 
cases $\la<0$ and $\la\ge 0$. {We notice here that in the case that $\lambda>0$ the solutions has to change sign.
When $\la<0$} we can take advantage of the homogeneity of the problem and 
argue in a similar way as in Theorem I.2.1 of \cite{MR2431434}. Namely, we will show that 
the functional 
$$
\widetilde{J}(u)=\frac{1}{2} \| u\|^2 -\frac{\la+1}{2} \int_0^{2\pi} u^2, 
$$
defined on $X$, achieves its minimum in the manifold $M$ given by
$$
M:=\left\{ u\in X:\ \int_0^{2\pi} |u|^{p+1}=1\right\}.
$$
Then by invoking Lagrange multipliers' rule we will check that a suitable 
multiple of a function were the minimum is achieved provides with a weak solution 
of \eqref{eq-potencia-impar-periodico}. In principle, the solution obtained 
could be trivial, i. e. constant, but we will rule out this possibility when 
$|\la|$ is sufficiently large.

\bigskip

\begin{lemma}\label{lema-variacional-negativo}
Let $p>1$ and if $s<\frac{1}{2}$ assume $p<2^*_s-1$. Then there exists $\la_0>0$ such that 
the functional $\widetilde{J}$ achieves its minimum in $M$ at a nonconstant function, provided 
that $\la<-\la_0$. In particular, problem \eqref{eq-potencia-impar} admits a nonconstant positive
$2\pi$-periodic classical solution.
\end{lemma}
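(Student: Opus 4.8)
The plan is to prove this by the direct method of the calculus of variations combined with Lagrange multipliers, followed by a scaling argument to guarantee non-constancy for large $|\la|$. First I would check that the functional $\widetilde{J}$ is bounded below on the manifold $M$ and that the infimum is attained. Boundedness below follows because, for $\la<0$, the quadratic form $\widetilde J(u)=\tfrac12\|u\|^2-\tfrac{\la+1}2\int_0^{2\pi}u^2$ dominates $\tfrac12\|u\|^2$ when $\la\le -1$ (and in general is controlled from below using the embedding $X\hookrightarrow L^2(0,2\pi)$ and the fact that $\|u\|^2\ge \int_0^{2\pi}u^2$), so on $M$ one gets a genuine coercivity in the $X$-norm. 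For attainment, take a minimizing sequence $\{u_n\}\subset M$: it is bounded in $X$, hence (passing to a subsequence) converges weakly in $X$ to some $u$, strongly in $L^{p+1}(0,2\pi)$ by the compact Sobolev embedding (recall the subcritical hypothesis $p<2^*_s-1$ when $s<\tfrac12$, and the compact embedding into every $L^q$ when $s\ge\tfrac12$), so $u\in M$; and by weak lower semicontinuity of the norm together with strong $L^2$ convergence, $\widetilde J(u)\le\liminf \widetilde J(u_n)$. Hence the infimum is attained at $u$.

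Next I would apply the Lagrange multiplier rule: since $u$ minimizes $\widetilde J$ on $M$ and the constraint functional $u\mapsto\int_0^{2\pi}|u|^{p+1}$ has nonvanishing derivative on $M$, there is $\mu\in\R$ with $\widetilde J'(u)=\mu\, G'(u)$ in $X'$, which in weak form reads
\begin{equation*}
\left< u,\phi\right> -(\la+1)\int_0^{2\pi} u\phi = \mu(p+1)\int_0^{2\pi}|u|^{p-1}u\,\phi\quad\text{for all }\phi\in X.
\end{equation*}
Testing with $\phi=u$ shows $\mu(p+1)=\widetilde J(u)\cdot 2=2\widetilde J(u)>0$ (the minimum value is positive since $\|u\|^2>0$ and the $L^2$ term has the right sign for $\la<-1$), so in particular $\mu>0$. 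Then a rescaling $v=\mu^{1/(p-1)}(p+1)^{1/(p-1)}u$ — by homogeneity of the equation in $u$ — converts this into a genuine weak solution of $\mathcal L v=\la v+|v|^{p-1}v$ in $(0,2\pi)$. Replacing $u$ by $|u|$ in the minimization (which leaves both $\widetilde J$ and the constraint unchanged, using $\|\,|u|\,\|\le\|u\|$) we may take the minimizer, hence the solution, to be nonnegative; a strong maximum principle (or the representation via the operator $K$ and Lemma~\ref{lema-exist}) upgrades this to strictly positive. By Theorem~\ref{th-equivalencia} the solution is a $2\pi$-periodic classical solution of \eqref{eq-potencia-impar}.

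The remaining, and I expect the most delicate, point is to rule out the constant solution when $|\la|$ is large. The constant solution of $\mathcal L u=\la u+|u|^{p-1}u$ is $u_\la\equiv(-\la)^{1/(p-1)}$ (recall $\mathcal L$ kills constants), and its associated value of $\widetilde J$ after renormalizing to lie on $M$ grows like a fixed power of $(-\la)$. The idea is to compare this with the value of $\widetilde J$ at a fixed nonconstant test function, say $\phi_0(x)=1+\tfrac12\cos x$ (normalized to $M$): evaluating $\widetilde J$ on the normalized $\phi_0$, the dominant term as $\la\to-\infty$ is $-\tfrac{\la+1}{2}\int_0^{2\pi}\phi_0^2$ over the appropriate power of $\|\phi_0\|_{L^{p+1}}$, and one checks that for $\la$ sufficiently negative this is strictly smaller than the value at the constant — essentially because the constant is the $L^2$-maximizer among functions with fixed $L^{p+1}$ norm only up to a constant factor that is beaten by the interplay of the $\|\cdot\|^2$ term and Jensen-type inequalities. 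Concretely I would show $\inf_M\widetilde J$ is strictly below $\widetilde J$ evaluated at the normalized constant for $\la<-\la_0$, where $\la_0$ is chosen from this comparison; since the minimizer realizes $\inf_M\widetilde J$, it cannot be the constant. The main obstacle is making this comparison quantitative and uniform — tracking the exact $\la$-dependence of both sides and choosing $\la_0$ — but it reduces to a one-variable estimate once the test function is fixed.
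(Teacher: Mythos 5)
Your proposal is correct and follows essentially the same route as the paper: direct minimization on $M$ (coercivity, weak closedness via the compact subcritical embedding, weak lower semicontinuity), Lagrange multipliers with $\mu>0$ and a homogeneity rescaling, positivity via $\widetilde J(|u|)\le\widetilde J(u)$ plus the strong minimum principle, and Theorem \ref{th-equivalencia} for regularity. Your final comparison step is also the paper's argument: for a fixed nonconstant $u_0\in M$ the H\"older inequality $\int_0^{2\pi}u_0^2\le(2\pi)^{\frac{p-1}{p+1}}$ is strict, so the $|\la|$-weighted $L^2$ gap eventually beats the fixed Gagliardo term and the minimizer cannot be the constant for $\la<-\la_0$.
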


\begin{proof}

Let us begin by showing that the functional $\widetilde{J}$ is coercive in $M$.  %Notice first that, if 
%$\la \le -1$, the coercivity is evident since then
%$$
%\widetilde{J}(u) \ge \frac{1}{2} \| u \|^2
%$$
%for every $u\in X$. In the complementary regime $-1<\la<0$, 
In fact the H\"older's inequality implies that
\begin{equation}\label{eq-holder}
\int_0^{2\pi} u^2 \le (2\pi)^\frac{p-1}{p+1} \left( \int_0^{2\pi} |u|^{p+1}\right)^\frac{2}{p+1}
=(2\pi)^\frac{p-1}{p+1},
\end{equation}
for every $u\in M$. If $\lambda\leq -1$ the coercivity is immediate. If $\lambda>-1$
$$
\widetilde{J}(u)\ge \frac{1}{2} \|u\|^2 -\frac{\la+1}{2} (2\pi)^\frac{p-1}{p+1},
$$
and the coercivity is also clear. 

The set $M$ is clearly weakly closed because of the subcriticality assumption on $p$. Indeed, 
if $\{u_n\}_{n=1}^\infty \subset M$ is such that $u_n \hookrightarrow u$, we may pass to 
a subsequence to ensure that $u_n\to u$ in $L^{p+1}(0,2\pi)$ (observe that the assumption $p<2^*_s-1$ is required here but only in the case $s<1/2$ because when $s\geq1/2$ we know that the functions are in $L^{q}(0,2\pi)$, $q>1$). Thus $u$ has $L^{p+1}$ norm 
equal to one which yields $u\in M$.

As for the (sequential) weak lower semicontinuity, if $u_n \hookrightarrow u$ we may pass 
again to a subsequence, to have $u_n\to u$ in $L^{p+1}(0,2\pi)$ and in $L^2(0,2\pi)$. 
Hence by the lower semicontinuity of the norm we have
$$
\liminf_{n\to +\infty} \widetilde{J}(u_n) \ge \frac{1}{2} \|u\| ^2 - \frac{\la+1}{2} \int_0^{2\pi}
u^2 =\widetilde{J}(u),
$$
as was to be proved, that is, $\widetilde{J}$ is lower semicontinuous.

Thus by Theorem I.1.2 in \cite{MR2431434} the functional $\widetilde{J}$ achieves its minimum at some 
point $v \in M$. Notice that since
$$||v(x)|-|v(y)||\leq |v(x)-v(y)|,\quad x,y\in \mathbb{R},$$
we have $\widetilde{J}(|v|)\leq \widetilde{J}(v)$ so we may assume $v \geq 0$ .

By the Lagrange multiplier rule, there exists $\mu\in \R$ such that
$$
\left<v,\varphi\right> - (\la+1) \int_0^{2\pi} v \varphi = \mu \int_0^{2\pi} |v|^{p-1}v \varphi,
$$
for every $\varphi \in X$. Taking $\varphi =v$ we see that, since $\lambda<0$,
$$
\mu = \| v\|^2 - (\la+1) \int_0^{2\pi} v^2 \ge -\la \int_0^{2\pi} v^2 >0.
$$
{ Setting $u=\mu^\frac{1}{p-1} v \geq 0$, we see that $u$ is a weak solution 
of \eqref{eq-potencia-impar-periodico} which is not identically zero and by the strong minimum principle $u>0$.} 

To conclude the proof, we need to show that $u$ is not constant, which is equivalent to say 
that $u\not \equiv  (2\pi)^{-\frac{1}{p+1}}$. We will check that this is true provided that $|\la|$ is 
large enough. It suffices to prove the existence of some $u_0\in M$ such that
$$
\| u_0 \|^2 - (\la+1)\int_0^{2\pi} u_0^2 < -\la (2\pi)^\frac{p-1}{p+1}.
$$
Notice that this is equivalent to 
\begin{equation}\label{eq-aux-10}
-\frac{1}{\la} B(u_0) + \int_0^{2\pi} u_0^2 <(2\pi)^\frac{p-1}{p+1},
\end{equation}
where we are temporarily denoting
$$
B(u_0)= \frac{1}{2} \int_0^{2\pi} \hspace{-2mm} \int_0^{2\pi} (u_0(x)-u_0(y))^2 H(x-y)dy dx.
$$
Finally, let us remark that if $u_0$ is not constant then it is well-known that 
the inequality in \eqref{eq-holder} (just replace $u$ by $u_0$) is strict. Thus choosing a nonconstant $u_0\in M$ and 
then letting $|\la|$ be large enough we see that \eqref{eq-aux-10} holds, as was to be proved.
\end{proof}

\bigskip

Next we turn to the case $\la>0$. The procedure followed in Lemma \ref{lema-variacional-negativo} 
is useless, since the infimum of the functional $\widetilde{J}$ in the manifold $M$ is negative. 
Thus we resort to the `direct' functional 
\begin{equation}\label{eq-def-J}
J(u)= \frac{1}{2} \| u\|^2 -\frac{\la+1}{2} \int_0^{2\pi} u^2 - \frac{1}{p+1} \int_0^{2\pi} |u|^{p+1},
\end{equation}
and show that it has a critical point by means of the well-known Linking Theorem. 
The argument is an adaptation of the one given in Chapter 5 of \cite{MR845785} (see also 
\cite{MR3002745}).

\begin{lemma}\label{lema-variacional-positivo}
Let $p>1$ and if $s<\frac{1}{2}$ assume that $p<2^*_s-1$. Then for every $\la>0$ the 
functional $J$ defined in \eqref{eq-def-J} admits a critical point $u\in X$, which is a 
nonconstant {$2\pi$ periodic classical {sign-changing} solution of \eqref{eq-potencia-impar}}.
\end{lemma}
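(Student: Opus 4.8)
The plan is to apply the Linking Theorem (as in Chapter~5 of \cite{MR845785}, see also \cite{MR3002745}) to the functional $J$ defined in \eqref{eq-def-J}, whose critical points are by construction weak solutions of \eqref{eq-potencia-impar-periodico}, hence by Theorem~\ref{th-equivalencia} classical $2\pi$-periodic solutions of \eqref{eq-potencia-impar}. First I would fix $k\in\mathbb{N}$ with $k^{2s}>\la$ (such a $k$ exists since $k^{2s}\to\infty$) and split $X=V\oplus W$, where $V=\mathrm{span}\{1,\cos(jx),\sin(jx): 1\le j\le k-1\}$ is the finite-dimensional space spanned by the eigenfunctions associated to eigenvalues $\le (k-1)^{2s}$, and $W=F_k$ is the closed subspace from Lemma~\ref{lema-eigen-variacional}. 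On $W$, the characterization $\|u\|^2\ge (k^{2s}+1)\int_0^{2\pi}u^2$ gives $\frac12\|u\|^2-\frac{\la+1}{2}\int_0^{2\pi}u^2\ge \frac{k^{2s}-\la}{2(k^{2s}+1)}\|u\|^2$, a positive definite quadratic form; combined with the subcritical Sobolev embedding bound on $\int_0^{2\pi}|u|^{p+1}$, this yields $J(u)\ge \al>0$ on $\partial B_\rho\cap W$ for suitable small $\rho$. On $V$ the quadratic form is negative semidefinite (it vanishes on constants and is $\le 0$ on the other eigenfunctions since $j^{2s}\le(k-1)^{2s}<k^{2s}$, though one must be slightly careful with the directions where $j^{2s}<\la$), and since $\int_0^{2\pi}|u|^{p+1}$ dominates for large $u$, $J\to-\infty$ on $V$; together with one extra direction $e\in W$, $\|e\|=1$, one builds the standard linking set $Q=\{v+te: v\in V,\ \|v\|\le R,\ 0\le t\le R\}$ on whose boundary $J\le 0$ for $R$ large. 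This is the geometry required by the Linking Theorem.

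Next I would verify the Palais--Smale condition for $J$ on $X$. Given a PS sequence $\{u_n\}$, the standard argument uses $J(u_n)-\frac{1}{p+1}\langle J'(u_n),u_n\rangle$ to extract, from the superquadratic term $\bigl(\frac12-\frac1{p+1}\bigr)\int_0^{2\pi}|u_n|^{p+1}$ and the bounded remainder, a uniform bound on $\|u_n\|$ (the lower-order term $-\frac{\la+1}{2}\int u^2$ is absorbed using that $\|\cdot\|$ controls the $L^2$ norm); then weak convergence plus the compact embedding $X\hookrightarrow L^{p+1}(0,2\pi)$ (here subcriticality of $p$ is used, and when $s\ge\frac12$ the embedding into all $L^q$ holds) upgrades weak to strong convergence in $X$. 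Hence the Linking Theorem produces a critical value $c\ge\al>0$ and a corresponding nonzero critical point $u\in X$; since $J(u)=c>0>J(\text{constants})$ whenever $\la>0$ (evaluating $J$ at a constant $c_0$ gives $-\frac{\la+1}{2}(2\pi)c_0^2-\frac{2\pi}{p+1}|c_0|^{p+1}\le 0$), $u$ is automatically nonconstant.

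It remains to check that $u$ changes sign, which I would argue by contradiction: if $u\ge 0$ (the case $u\le 0$ is symmetric), then $u$ is a nonnegative nonconstant classical solution of $(-\De)^s u=\la u+u^p$ in $\R$; testing the weak formulation against $u$ itself gives $\|u\|^2=(\la+1)\int_0^{2\pi}u^2+\int_0^{2\pi}u^{p+1}$, but pairing instead against the constant function $1$ (a legitimate element of $X$) yields $\int_0^{2\pi}u(x)\,dx\cdot$(something) — more precisely, since $\mathcal{L}$ annihilates constants, $\int_0^{2\pi}\mathcal{L}u\cdot 1\,dx=0$, so $0=\int_0^{2\pi}(\la u+u^p)\,dx$, forcing $\la\int_0^{2\pi}u+\int_0^{2\pi}u^p=0$, which is impossible for $\la>0$ and $u\ge 0$ not identically zero. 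This is the cleanest way to exclude one-signed solutions and is exactly the precaution flagged in the Introduction.

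I expect the main obstacle to be the careful bookkeeping in the linking geometry, specifically handling the directions in $V$ where the quadratic form is not strictly negative: the constant direction contributes $-\frac{\la+1}{2}\int u^2<0$ so it is fine, but one must confirm that the decomposition genuinely places the positive part of the quadratic form on $W$ and a non-positive part on $V$, and that the superquadratic $L^{p+1}$ term indeed forces $J\to-\infty$ uniformly on $V\oplus\mathbb{R}e$ along $\partial Q$ — this requires the finite-dimensionality of $V$ so that all norms on $V\oplus\mathbb{R}e$ are equivalent and $\int_0^{2\pi}|u|^{p+1}$ is bounded below by a positive power of $\|u\|$ there. The PS verification and the sign-change argument are comparatively routine given the tools already assembled (Theorem~\ref{th-equivalencia}, Lemma~\ref{lema-eigen-variacional}, and the embeddings), so the energy estimates defining the linking sets are where the real work lies.
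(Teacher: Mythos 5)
Your overall route is exactly the paper's: a linking argument in $X$ with the splitting $V=E_k=\mathrm{span}\{\cos(jx),\sin(jx)\}_{j=0}^{k-1}$, $W=F_k$ from Lemma \ref{lema-eigen-variacional}, the Palais--Smale condition, Theorem 5.3 of \cite{MR845785}, and Theorem \ref{th-equivalencia} to upgrade to a classical periodic solution; your exclusion of one-signed solutions by testing \eqref{eq-potencia-impar-periodico} with the constant function $1\in X$ is correct and in fact makes explicit what the paper only asserts, and your nonconstancy argument via $J(u)=c>0\ge J(\mathrm{const})$ is also fine (up to the harmless slip that $J(c_0)=-\la\pi c_0^2-\tfrac{2\pi}{p+1}|c_0|^{p+1}$, since $\tfrac12\|c_0\|^2=\pi c_0^2$). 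However, two steps would fail as written. First, the Palais--Smale boundedness: the combination you propose satisfies $J(u_n)-\tfrac{1}{p+1}J'(u_n)u_n=\left(\tfrac12-\tfrac1{p+1}\right)\left(\|u_n\|^2-(\la+1)\int_0^{2\pi}u_n^2\right)$, i.e.\ the superquadratic term cancels identically, so there is no ``$(\tfrac12-\tfrac1{p+1})\int|u_n|^{p+1}$'' left to exploit; and the remaining quadratic form is indefinite on $X$ (negative on constants and on every mode with $j^{2s}<\la$), while the inequality $\int_0^{2\pi}u^2\le\|u\|^2$ cannot absorb $(\la+1)\int_0^{2\pi}u_n^2$ precisely because $\la+1>1$. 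The paper's fix is to take $\gamma\in(\tfrac1{p+1},\tfrac12)$ in $J(u_n)-\gamma J'(u_n)u_n$, so that positive multiples of both $\|u_n\|^2$ and $\int_0^{2\pi}|u_n|^{p+1}$ survive, and then to absorb the $L^2$ term into the latter by H\"older and Young; alternatively, bound $\int_0^{2\pi}|u_n|^{p+1}\le C(1+\|u_n\|)$ via $J(u_n)-\tfrac12 J'(u_n)u_n$ and feed this back into $J(u_n)=c+o(1)$.

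Second, the choice of $k$ and the sign of $J$ on $V$: fixing merely some $k$ with $k^{2s}>\la$ is not enough. For $u=a\cos(jx)$ with $\la<j^{2s}\le (k-1)^{2s}$ one has $\tfrac12\|u\|^2-\tfrac{\la+1}{2}\int_0^{2\pi}u^2=\tfrac{j^{2s}-\la}{2}\,\pi a^2>0$, so $J>0$ for small $|a|$ along such directions and the requirement $J\le 0$ on the bottom face $\bar B_R\cap V$ of the linking set $\partial Q$ breaks down. The problematic directions are therefore those with $j^{2s}>\la$ (not $j^{2s}<\la$, as your parenthesis suggests), and the quadratic part does not vanish on constants, it equals $-\la\pi c_0^2<0$. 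The resolution is exactly the paper's Steps 2--3: take the unique $k$ with $(k-1)^{2s}\le\la<k^{2s}$, so that $j^{2s}\le\la$ for all $j\le k-1$ and hence $J\le 0$ on all of $E_k$, while Lemma \ref{lema-eigen-variacional} still gives $J\ge\beta>0$ on $\partial B_r\cap F_k$ because $k^{2s}>\la$. With this choice of $k$, the remaining geometry (in particular $J\to-\infty$ on any finite-dimensional subspace by norm equivalence) is fine and coincides with the paper's Step 4.
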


\begin{proof}
For details on the linking theorem, see for instance Theorem 5.3 in \cite{MR845785}. In order 
to check the needed hypotheses, we divide the proof in several steps.

\medskip

\noindent {\it Step 1. The functional $J$ verifies the Palais-Smale condition}.

\smallskip

Let $\{u_n\}_{n=1}^\infty{\subseteq X}$ be a Palais-Smale sequence, that is, $J(u_n)\to c\in \R$ 
and $J'(u_n)\to 0$. Choose $\gamma \in (\frac{1}{p+1},\frac{1}{2})$. Then 
\begin{align*}
{c+o(1)=}J(u_n)-\gamma J'(u_n)u_n &= \left(\frac{1}{2}-\gamma\right) \| {u_n}\|^2 - (\la+1)\left(\frac{1}{2}-\gamma\right) 
\int_0^{2\pi} {u^2_n} \\
& + \left(\gamma-\frac{1}{p+1}\right) \int_0^{2\pi} |{u_n}|^{p+1} 
\end{align*}
Now choosing $\e>0$ small and using H\"older's {and Young's inequalities we get}
$$
(\la+1)\left(\frac{1}{2}-\gamma\right) \int_0^{2\pi}{u^2_n} \le C \left(\int_0^{2\pi} |{u_n}|^{p+1}\right)^\frac{2}{p+1} 
\le \e \int_0^{2\pi} |{u_n}|^{p+1} + D{(\e, p)},
$$
where $C$ and $D$ are positive constants. Hence we deduce that
\begin{align*}
c+o(1)=J(u_n)-\gamma J'(u_n)u_n & \ge \left(\frac{1}{2}-\gamma\right) \| {u_n}\|^2 
+ \left(\gamma-\frac{1}{p+1}-\e \right) \int_0^{2\pi} |{u_n}|^{p+1} -D\\
& \ge \left(\frac{1}{2}-\gamma\right) \| {u_n}\|^2 -D,
\end{align*}
if $\e$ is chosen small enough. 
%On the other hand, using that $\{u_n\}_{n=1}^\infty$ is a Palais-Smale sequence we have 
%$J(u_n)-\gamma J'(u_n)u_n \le C \| u_n\| + D$. Thus
%$$
%\| u_n \| ^2 \le C \| u_n\| + D,
%$$
The previous inequality clearly entails the boundedness of $\{u_n\}_{n=1}^\infty$. By passing to a subsequence, 
we may assume that $u_n\rightharpoonup u$ weakly in $X$, while $u_n\to u$ 
strongly in $L^2(0,2\pi)$ and $L^{p+1}(0,2\pi)$. Since $J'(u_n)\to 0$, we also
deduce $J'(u_n)(u_n-u)\to 0$. But
\begin{align*}
o(1)=J'(u_n)(u_n-u) & = \left< u_n,u_n-u\right> - (\la+1) \int_0^{2\pi} u_n(u_n-u) \\
& \ \  - \int_0^{2\pi} |u_n|^{p-1}u_n (u_n-u)\\[.25pc]
& = \|u_n\|^2 - \left<u_n,u\right> + o(1) = \|u_n\|^2 - \|u\|^2 + o(1).
\end{align*}
Thus $\|u_n\|\to \|u\|$, which implies that $u_n\to u$ in $X$. This shows that 
$J$ verifies the Palais-Smale condition.

\medskip

\noindent {\it Step 2. Let $k\in \mathbb{N}$. Since $\lambda>0$ we can suppose that $(k-1)^{2s} \le \la < k^{2s}$ for some $k\in\mathbb{N}$, that is, $\lambda\in [\lambda_{k-1}, \lambda_k)$ where $\lambda_k$ is the $k$-eigenvalue of $\mathcal{L}$. For that $k$, let us consider $F_k$ the set given in Lemma \ref{lema-eigen-variacional}. Then there exist $r>0$, $\beta>0$ such 
that $J(u)\ge \beta$ if $u\in F_k$ and $\|u\|= r$}.

\smallskip

First of all, observe that, by \eqref{eq-caract-variacional} in Lemma \ref{lema-eigen-variacional} 
we have
$$
\| u\|^2 \ge (k^{2s}+1) \int_0^{2\pi} u^2, \quad \hbox{for every } u\in F_k.
$$
Therefore, it is immediate that for every $u\in F_k$ with $\|  u\|=r$:
\begin{align*}
J(u) & \ge \frac{k^{2s}-\la}{2(k^{2s}+1)} \| u \|^2 - \frac{1}{p+1} \int_0^{2\pi} |u|^{p+1}\\
& \ge \frac{k^{2s}-\la}{2(k^{2s}+1)} \| u \|^2 - C \| u\|^{p+1} \\
& = \frac{k^{2s}-\la}{2(k^{2s}+1)} r^2 - C r ^{p+1} :=\beta,
\end{align*}
and $\beta$ can be chosen to be positive just by choosing $r$ small enough.

\medskip

\noindent {\it Step 3. Let $k$ be as in Step 2 and denote  
$$
E_k={\rm span} \left\{ \cos(jx),\sin(jx) \right\}_{j=0}^{k-1}. 
$$
Then $J(u)\le 0$ for every $u\in E_k$.}

\smallskip

Choose $u\in E_k$, so that it can be written in the form
$$
u=\frac{a_0}{2} + \sum_{j=1}^{k-1} a_j \cos(jx)+b_j \sin(jx).
$$
Then, using  \eqref{eq-ortogonalidad} we have
\begin{align*}
\| u\|^2 & =\pi \left( \frac{a_0^2 }{2} + \sum_{j=1}^{k-1} (j^{2s}+1)(a_j^2+b_j^2) \right)\\
& \le \pi ((k-1)^{2s}+1)\left( \frac{a_0^2 }{2} + \sum_{j=1}^{k-1} (j^{2s}+1)(a_j^2+b_j^2) \right)\\
& = ((k-1)^{2s}+1)\int_0^{2\pi} u^2,\quad {u\in E_k}.
\end{align*}
It follows that
$$
J(u) \le \frac{1}{2} \| u\|^2 -\frac{\la+1}{2} \int_0^{2\pi} u^2 
\le \frac{(k-1)^{2s}-\la}{2}\int_0^{2\pi} u^2 \le 0
$$
for every $u\in E_k$.

\medskip

\noindent {\it Step 4. Let $Y$ be any finite-dimensional subspace of $X$. Then there exists $R>{r}$ 
such that $J(u)< 0$ if $u\in Y$, $\|u\| \ge R$. }
 
\smallskip

Since all norms defined on a finite dimensional vector space are equivalent, 
there exists a constant $C>0$ such that 
$$
\frac{1}{p+1} \int_0^{2\pi} |u|^{p+1} \ge C \| u\| ^{p+1}, \quad \hbox{for every } u\in Y.
$$
Thus, using that $\la>0$, 
$$
J(u) \le \frac{1}{2} \|u\|^2 - C \| u\|^{p+1}<0,
$$
provided that $\|u\|\ge R$ and $R$ is large enough.

\medskip

\noindent {\it Step 5. Conclusion.} We can use Theorem 5.3 in \cite{MR845785} (see also Remark 5.5 (iii) there) 
to conclude that $J$ admits a critical point $u\in X\setminus \{0\}$. Then $u$ will be a weak 
solution of \eqref{eq-potencia-impar-periodico} which is not identically zero. Observe that problem 
\eqref{eq-potencia-impar-periodico} does not admit constant solutions aside the trivial one when $\la>0$, 
so that $u$ is a nonconstant solution of \eqref{eq-potencia-impar-periodico} and 
by Theorem \ref{th-equivalencia} a nonconstant $2\pi$-periodic solution of \eqref{eq-potencia-impar}. 
The proof is concluded. 
\end{proof}

\bigskip
\begin{proof}[Proof of Theorem \ref{th-variacional}]
It is clear that the desired conclusion follows by using Lemma \ref{lema-variacional-negativo} and Lemma \ref{lema-variacional-positivo}
\end{proof}

\begin{remark}\label{remark-variacional-general}
{\rm Weak solutions of the slightly more general problem
$$
\mathcal{L} u = \la u + f(u) \quad \hbox{in } (0,2\pi)
$$
can also be obtained with a minor variation of the proof of Lemma \ref{lema-variacional-positivo}, 
provided that $f$ is a $C^1$ function which verifies some natural growth restrictions (see 
\cite{MR845785} and \cite{MR3002745}). Namely:

\begin{itemize}

\item There exist $p>1$ and $C>0$ such that $|f(t)|\le C(1+|t|)^p$, where $p<2^*_s-1$ if $s<\frac{1}{2}$;

\item $f'(0)=0$ and $tf(t)\ge 0$ if $t\in \R$;

\item There exists $\mu>2$ such that $0<\mu \ds \int_0^t f(\tau)d\tau \le tf(t)$ for large $|t|$.
\end{itemize}
}\end{remark}

\bigskip

%%%%%%%%%%%%%%%%%%%%%%%%%%%%%%%%%%%%%%%%%%%%%%%%%%%%%%%%%%%%%%%%%
\section{Some examples}\label{sect-ejemplos}
%%%%%%%%%%%%%%%%%%%%%%%%%%%%%%%%%%%%%%%%%%%%%%%%%%%%%%%%%%%%%%%%%
\setcounter{equation}{0}

In this final section we are focusing our attention on some particular 
examples. Unlike their local counterparts, it is difficult to classify 
all their solutions, so we only obtain existence of periodic solutions.

To begin with, let $p>1$ and consider 
\begin{equation}\label{eq-ejemplo-par}
(-\De)^s u = u+ |u|^p \quad \hbox{in } \R.
\end{equation}
When $s=1$, the phase-space for this problem shows that there exist 
periodic solutions with small amplitude, but the norms of all periodic solutions is 
uniformly bounded. The periods of the solutions range from $2\pi$ to infinity. For the nonlocal 
version \eqref{eq-ejemplo-par} we obtain:

\begin{corollary}\label{coro-1}
Let $p>1$. Then problem \eqref{eq-ejemplo-par} admits {at least two} classical periodic solutions {$u_i$, $i=1,\, 2$} with 
small amplitude and minimal period close to $2\pi$. In addition if $p<2^*_s-1$ when $s<1/2$ then there exists $C>0$ such that 
\begin{equation}\label{eq-cotas}
\| u\|_{L^\infty(\R)} \le C
\end{equation}
for every periodic solution of \eqref{eq-ejemplo-par}
\end{corollary}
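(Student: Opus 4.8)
The plan is to derive Corollary \ref{coro-1} as a direct application of Theorems \ref{th-bifurcacion} and \ref{th-variacional} after a simple change of variables, together with a Pohozaev-type a priori bound for the last assertion. First I would observe that \eqref{eq-ejemplo-par} is \emph{not} of the form \eqref{eq-bifur} or \eqref{eq-potencia-impar} because the nonlinearity $u+|u|^p$ has a linear part at zero with coefficient $1$, which is fixed; but introducing a scaling parameter converts the coefficient into a free bifurcation parameter. Precisely, if $v$ solves $(-\De)^s v = \la v + |v|^p$ in $\R$ and we set $u(x)=\tau v(\tau^{\frac1s}x)$ for suitable constants $\tau$, then $u$ solves an equation of the type $(-\De)^s u = \mu u + |u|^p$ with $\mu$ and the amplitude linked; choosing $\tau$ so that the coefficients match, one sees that small-amplitude solutions of \eqref{eq-ejemplo-par} correspond to solutions of $(-\De)^s v = \la v + |v|^p$ with $\la$ close to $1$ and small $\|v\|_{L^\infty}$. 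Here the nonlinearity $f(v)=|v|^p$ is $C^1$ with $f'(0)=0$ since $p>1$, so Theorem \ref{th-bifurcacion} applies with $k=1$: it yields, for every small $r>0$, two solutions with $L^\infty$ norm equal to $r$, bifurcating from the eigenvalue $\la=\frac{1^{2s}}{1+1^{2s}}=\frac12$ of \eqref{eq-eigen-global}, hence with minimal period close to $2\pi(1+1^{2s})^{1/2s}$ after rescaling back this is period close to $2\pi$. Thus \eqref{eq-ejemplo-par} admits at least two classical periodic solutions $u_1,u_2$ with small amplitude and minimal period close to $2\pi$.

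For the a priori bound \eqref{eq-cotas}, assuming $p<2^*_s-1$ when $s<\frac12$, I would argue by contradiction using the periodic framework. Suppose there is a sequence of periodic solutions $u_n$ of \eqref{eq-ejemplo-par} with $\|u_n\|_{L^\infty(\R)}\to\infty$; each $u_n$ has some minimal period $T_n$, and rescaling $v_n(x)=u_n\big(\frac{T_n}{2\pi}x\big)$ we may assume $v_n$ is $2\pi$-periodic and solves $\mathcal{L}_{\sigma_n} v_n = v_n + |v_n|^p$ in $(0,2\pi)$ for the operator associated to period $T_n$ which, after a further rescaling of the equation itself, reduces to the fixed operator $\mathcal{L}$ with a modified (possibly vanishing or diverging) linear coefficient. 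The key point is that when $p$ is subcritical the nonlinear term $|v|^p$ dominates at large amplitude, so a Pohozaev/Nehari-type identity in the space $X$—multiplying the equation by $v_n$ and integrating, and combining with the energy functional $J$—forces $\|v_n\|$ to stay bounded. Concretely, from $\langle v_n,v_n\rangle = \mu_n\int_0^{2\pi} v_n^2 + \int_0^{2\pi}|v_n|^{p+1}$ and the fact that $\int_0^{2\pi}|v_n|^{p+1}$ grows faster than $\int_0^{2\pi} v_n^2$ by Hölder when the amplitude is large, one gets a contradiction with $\mu_n$ bounded; the boundedness of $\mu_n$ itself follows because the rescaled linear coefficient is controlled by how $T_n$ compares to $2\pi$, and periods bounded away from $0$ and $\infty$ suffice. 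Subcriticality then upgrades the $X$-bound to an $L^\infty$-bound via the regularity estimates in Theorem \ref{th-equivalencia} (and Remark \ref{remark-control-normas}).

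The main obstacle I anticipate is making the a priori bound argument fully rigorous: one must handle the possibility that the periods $T_n$ of the hypothetical unbounded sequence of solutions degenerate (tend to $0$ or $\infty$), since the rescaling that brings everything to the fixed operator $\mathcal{L}$ on $(0,2\pi)$ distorts the linear coefficient accordingly. If $T_n\to\infty$ the linear coefficient after rescaling tends to $0$ and the equation becomes purely superlinear, so the Pohozaev identity still gives the bound; if $T_n\to 0$ one needs a separate (easy) argument showing such short-period solutions cannot exist or are automatically small. A clean way around this is to work directly in the $x$-variable with a Pohozaev-type identity for $(-\De)^s$ on $\R$ restricted to one period, exploiting periodicity to kill boundary terms, thereby obtaining the bound uniformly in the period. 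Once this structural identity is in place, the subcritical exponent condition is exactly what is needed to close the estimate, and the passage from $L^\infty$-bounds to the full statement is routine via the regularity machinery already developed. The existence part, by contrast, is essentially immediate from Theorem \ref{th-bifurcacion} once the correct rescaling is identified, so I would present that first and then devote the bulk of the proof to \eqref{eq-cotas}.
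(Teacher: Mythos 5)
Your existence argument is essentially the paper's: apply Theorem \ref{th-bifurcacion} with $k=1$ to $(-\De)^s v=\la v+|v|^p$ (note $f(v)=|v|^p$ is $C^1$ with $f'(0)=0$ since $p>1$), obtaining two small solutions with $\la_i$ near $\tfrac12$, and then rescale, $u_i(x)=\la_i^{-\frac{1}{p-1}}v_i(\la_i^{-\frac{1}{2s}}x)$, to remove the parameter and bring the minimal period close to $2\pi$. Your exponents in the scaling are left vague (and the remark that $\la$ is ``close to $1$'' contradicts your own later, correct, identification $\la\approx\tfrac12$), but the idea is the same and fixable.

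The a priori bound is where there is a genuine gap. Your central claim --- that testing the ($2\pi$-rescaled) equation with $v_n$ and using H\"older, ``the nonlinear term dominates at large amplitude, forcing $\|v_n\|$ to stay bounded'' --- is not a valid mechanism. Testing a solution with itself produces an identity that every solution satisfies automatically; it expresses $\|v_n\|^2$ in terms of $\int v_n^2$ and the nonlinear term and yields no contradiction and no bound (in the local model case, $\int|\nabla u|^2=\int u\,f(u)$ never by itself bounds solutions of a superlinear problem). Worse, here the nonlinearity is $|u|^p$, not $|u|^{p-1}u$, so the term you would get, $\int |v_n|^p v_n$, is not even sign-definite, and the solutions covered by \eqref{eq-cotas} need not be positive; a Nehari-type manipulation does not apply. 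In addition, \eqref{eq-cotas} is asserted for \emph{every} periodic solution, with no restriction on the period, and your treatment of degenerating periods $T_n\to 0$ or $T_n\to\infty$ (where the rescaled coefficient $(T_n/2\pi)^{2s}$ vanishes or blows up) is deferred to an unproven ``easy argument''; a fractional Pohozaev identity ``on one period'' is also problematic, since the dilation field does not respect periodicity and no such identity is established in the paper's framework. The paper's proof uses a different and essential ingredient that your proposal lacks: first the pointwise evaluation of $(-\De)^s u$ at a minimum gives $u>-1$, so $w=u+1>0$; then the Gidas--Spruck blow-up method around maxima $x_n$ with $M_n=\|w_n\|_{L^\infty}\to\infty$, together with interior regularity, produces in the limit a bounded positive solution of $(-\De)^s w=w^p$ in $\R$ with $w(0)=1$, contradicting the Liouville theorems of \cite{MR2739791} ($s\ge\tfrac12$) and \cite{2014arXiv1401.7402Z} ($s<\tfrac12$); this is exactly where the subcriticality $p<2^*_s-1$ enters, it is purely local, and it is insensitive to the period. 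Without this (or some comparable Liouville-type input), your route to \eqref{eq-cotas} does not close. Finally, your appeal to Remark \ref{remark-control-normas} is misplaced, since it requires $|f(t)|\le C|t|$, which fails for $|u|^p$.
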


\begin{proof}[Proof of Corollary \ref{coro-1}]
Consider the problem
$$
(-\De)^s v = \la v + |v|^p \quad \hbox{in } \R.
$$
By Theorem \ref{th-bifurcacion} with $k=1$, there exist {at least two} solutions {$(\lambda_i, v_i)_{i=1,2}$} of small amplitude 
for some values of {$\lambda_i$, $i=1,\, 2$,} close to $\frac{1}{2}$ and whose minimal periods are 
close to $2^\frac{2s+1}{2s} \pi$. Then it is easily seen that 
$$
{u_i}(x)= {\la_i}^{-\frac{1}{p-1}} {v_i}({\la_i}^{-\frac{1}{2s}} x), \quad x\in \R,\,  {i=1,\, 2,}
$$
are solutions of \eqref{eq-ejemplo-par} with small amplitude and periods close 
to $2\pi$.

To show the universal boundedness of the solutions we will use the well-known blow-up 
method of \cite{MR619749}. But first we reduce the problem to deal only with positive 
solutions. If $u$ is a periodic solution of \eqref{eq-ejemplo-par} and $x_0\in \R$ is a point 
where its minimum is achieved we have $(-\De)^s u(x_0)<0$. Then $u(x_0)+|u(x_0)| ^p<0$, which 
yields $u(x_0)>-1$. Thus $u>-1$ in $\R$. 

Let $w= u+1$, which is a positive solution of the problem
\begin{equation}\label{eq-problema-positivo}
(-\De)^s w = w-1+ |w-1|^p \quad \hbox{in } \R.
\end{equation}
The proof will be concluded if we show that \eqref{eq-cotas} holds for solutions of 
\eqref{eq-problema-positivo}. Thus assume it does not hold and let $\{w_n\}_{n=1}^\infty$ 
be a sequence of periodic solutions of \eqref{eq-problema-positivo} such that 
$M_n:=\| w_n\|_{L^\infty(\R)}\to +\infty$. Choose points $x_n\in \R$ where the functions $w_n$ attain 
their maxima and introduce the functions:
$$
z_n(x)= \frac{w_n(M_n^{-\frac{2s}{p-1}}x+x_n)}{M_n}, \quad x\in \R.
$$
It is not hard to check that $z_n$ verifies the equation
$$
(-\De)^s z_n = M_n^{-\frac{1}{p-1}} z_n - M_n ^{-\frac{p}{p-1}} + |z_n-M_n^{-1}|^p \quad \hbox{in }\R,
$$
while $0<z_n\le 1$ and $z_n(0)=1$. Now we can argue in the usual way with the use of interior regularity 
(cf. \cite{MR2270163} or \cite{MR2494809}) to deduce the existence of a subsequence and a function 
$w\in C^\infty(\R)$ such that $0<w\le 1$, $w(0)=1$ and 
$$
(-\De)^s w= w^p \quad \hbox{in } \R.
$$
However, this contradicts Theorem 1.2 in \cite{MR2739791} when $s\ge \frac{1}{2}$ and Theorem 4 in 
\cite{2014arXiv1401.7402Z} (see also \cite{MR2200258}) if $s<\frac{1}{2}$. We remark that all these 
results are still valid when the equations are considered in one dimension.

This contradiction shows that \eqref{eq-cotas} must be true, and the proof is concluded.
\end{proof}

\medskip

Now we will consider two problems where the nonlinearity in \eqref{eq-ejemplo-par} is replaced 
by an odd power. These problems are 
\begin{equation}\label{eq-ejemplo-impar-1}
(-\De)^s u = u+ |u|^{p-1}u \quad \hbox{in } \R
\end{equation}
and
\begin{equation}\label{eq-ejemplo-impar-2}
(-\De)^s u = -u+ |u|^{p-1}u \quad \hbox{in } \R.
\end{equation}
A similar reasoning as above, using Theorem \ref{th-variacional} instead of Theorem \ref{th-bifurcacion}, 
gives the following corollaries:

\begin{corollary}\label{coro-2}
Assume $p>1$ and $p<2^*_s-1$ in the case $s<1/2$. Then problem \eqref{eq-ejemplo-impar-1}
admits classical {sign-changing} periodic solutions for every period. 
\end{corollary}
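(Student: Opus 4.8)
The plan is to reduce Corollary \ref{coro-2} to Theorem \ref{th-variacional} by a simple rescaling that transfers the coefficient $1$ in front of $u$ in \eqref{eq-ejemplo-impar-1} onto a freely chosen parameter $\la>0$. Concretely, let $\la>0$ be arbitrary and apply Theorem \ref{th-variacional} (the case $\la>0$) to obtain a nonconstant $2\pi$-periodic classical sign-changing solution $v$ of
\begin{equation*}
(-\De)^s v = \la v + |v|^{p-1} v \quad \hbox{in } \R.
\end{equation*}
Then set $u(x)= \la^{-\frac{1}{p-1}} v(\la^{-\frac{1}{2s}} x)$. Using the scaling identity $(-\De)^s[v(\la^{-\frac{1}{2s}}\cdot)](x) = \la^{-1} \big((-\De)^s v\big)(\la^{-\frac{1}{2s}} x)$ together with the homogeneity of the nonlinearity, one checks directly that $u$ solves \eqref{eq-ejemplo-impar-1}; it is classical and sign-changing because $v$ is, and it is periodic of period $2\pi \la^{\frac{1}{2s}}$.

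The second point is to observe that as $\la$ ranges over $(0,\infty)$, the period $2\pi\la^{\frac{1}{2s}}$ ranges over all of $(0,\infty)$ as well, since $\la\mapsto 2\pi\la^{\frac{1}{2s}}$ is a continuous strictly increasing bijection of $(0,\infty)$ onto itself. Hence, given any prescribed period $T>0$, choosing $\la = (T/2\pi)^{2s}$ produces via the above construction a nonconstant classical sign-changing periodic solution of \eqref{eq-ejemplo-impar-1} with that period. This is exactly the assertion of the corollary. The subcriticality hypothesis ($p<2^*_s-1$ when $s<\tfrac12$) is precisely what Theorem \ref{th-variacional} requires, so it is inherited without further ado.

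I do not expect any serious obstacle here: the argument is a routine scaling reduction, entirely parallel to the proof of Corollary \ref{coro-1} but invoking Theorem \ref{th-variacional} in place of Theorem \ref{th-bifurcacion}. The only mild care needed is to record that Theorem \ref{th-variacional} is applicable for \emph{every} $\la>0$ (not merely for $\la$ in some restricted range), which is indeed its content, and that the rescaled function $u$ genuinely inherits the sign-changing property (this is immediate: $u$ and $v$ take values of the same signs). One might also remark, for completeness, that the minimal period of $u$ is $2\pi\la^{\frac{1}{2s}}$ divided by the (integer) ratio between $2\pi$ and the minimal period of $v$; but since $v$ is already known to be nonconstant and $2\pi$-periodic, its minimal period is $2\pi/\ell$ for some positive integer $\ell$, and replacing $\la$ by $\ell^{2s}\la$ if necessary one can arrange the minimal period of $u$ to be exactly the prescribed $T$. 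This bookkeeping is the only place where a line or two of extra comment is warranted, and it does not affect the substance of the proof.
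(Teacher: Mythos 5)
Your proposal is correct and follows essentially the same route as the paper, which proves this corollary exactly by the scaling argument of Corollary \ref{coro-1} with Theorem \ref{th-variacional} (case $\la>0$, arbitrary) in place of Theorem \ref{th-bifurcacion}: the substitution $u(x)=\la^{-\frac{1}{p-1}}v(\la^{-\frac{1}{2s}}x)$ converts the $2\pi$-periodic sign-changing solution of \eqref{eq-potencia-impar} into a solution of \eqref{eq-ejemplo-impar-1} of period $2\pi\la^{\frac{1}{2s}}$, and $\la=(T/2\pi)^{2s}$ realizes any prescribed period $T>0$. Your closing remark about adjusting $\la$ to make $T$ the \emph{minimal} period is not needed for the statement (and as written is not fully justified, since the new solution could again have minimal period $2\pi/\ell'$ for some integer $\ell'>1$), but this does not affect the proof of the corollary as stated.
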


\begin{corollary}\label{coro-3}
Assume $p>1$ and $p<2^*_s-1$ in the case $s<1/2$. Then problem \eqref{eq-ejemplo-impar-2}
admits classical {positive} (and negative) periodic solutions with large periods. 
\end{corollary}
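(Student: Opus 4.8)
\textbf{Proof proposal for Corollary \ref{coro-3}.}

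The plan is to mimic the scaling argument used in the proof of Corollary \ref{coro-1}, but now invoking Theorem \ref{th-variacional} (specifically its $\la<-\la_0$ branch, which is realized concretely by Lemma \ref{lema-variacional-negativo}) in place of the bifurcation theorem. First I would fix a target period $T>0$ and a parameter $\la<-\la_0$, where $\la_0\ge 0$ is the threshold furnished by Theorem \ref{th-variacional}; by taking $|\la|$ as large as we please we will be able to make $T$ as large as we please, which is why only \emph{large} periods appear in the statement. Applying Theorem \ref{th-variacional} to the model equation
\begin{equation*}
(-\De)^s v = \la v + |v|^{p-1} v \quad \hbox{in } \R
\end{equation*}
produces a nonconstant positive $2\pi$-periodic classical solution $v$ (for $\la<-\la_0$). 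Its negative $-v$ is a solution as well, which accounts for the parenthetical ``(and negative)'' in the statement.

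Next I would undo the scaling. Since the equation \eqref{eq-ejemplo-impar-2} is $(-\De)^s u = -u + |u|^{p-1}u$, I set
\begin{equation*}
u(x) = (-\la)^{-\frac{1}{p-1}} v\big((-\la)^{-\frac{1}{2s}} x\big), \qquad x\in\R.
\end{equation*}
A direct computation using the homogeneity of $(-\De)^s$ under dilations (it scales like $(-\De)^s[v(c\,\cdot)](x) = c^{2s}\big((-\De)^s v\big)(cx)$) shows that $u$ solves \eqref{eq-ejemplo-impar-2}: the factor $(-\la)^{-1}$ coming from the Laplacian, after dividing through by $(-\la)^{-\frac{p}{p-1}}$, turns the $\la v$ term into $-u$ and the $|v|^{p-1}v$ term into $|u|^{p-1}u$. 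Since $v$ is positive, nonconstant, $2\pi$-periodic and classical, so is $u$, and its minimal period is $2\pi(-\la)^{\frac{1}{2s}}$, which tends to $+\infty$ as $\la\to-\infty$. Hence for any prescribed large period $T$ we choose $\la = -(T/2\pi)^{2s}$, which is $<-\la_0$ once $T$ is large enough, and obtain a positive (and a negative) $T$-periodic classical solution of \eqref{eq-ejemplo-impar-2}.

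I do not expect any serious obstacle here: the only points requiring care are purely bookkeeping — checking that the dilation exponents $\frac{1}{p-1}$ and $\frac{1}{2s}$ are the right ones so that all three terms balance, and noting that the subcriticality hypothesis $p<2^*_s-1$ when $s<\tfrac12$ is exactly what is needed to invoke Theorem \ref{th-variacional}. The mild ``main obstacle'', if any, is simply that large periods are unavoidable: the construction only gives solutions with period $2\pi(-\la)^{1/2s}$ and $-\la$ is forced to exceed $\la_0$, so small-period positive solutions are not produced by this method. This is consistent with the local picture $s=1$, where positive solutions of $-u''=-u+|u|^{p-1}u$ oscillating about the equilibria also necessarily have period bounded below.
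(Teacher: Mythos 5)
Your proposal is correct and is essentially the paper's own (only sketched) argument: the paper proves Corollary \ref{coro-3} exactly by the reasoning of Corollary \ref{coro-1} with Theorem \ref{th-variacional} (the branch $\la<-\la_0$, i.e.\ Lemma \ref{lema-variacional-negativo}) in place of the bifurcation theorem, followed by the rescaling $u(x)=(-\la)^{-\frac{1}{p-1}}v\bigl((-\la)^{-\frac{1}{2s}}x\bigr)$, whose exponents you have verified correctly and which yields period $2\pi(-\la)^{\frac{1}{2s}}\to\infty$ as $\la\to-\infty$, with the negative solution obtained by oddness of the nonlinearity. The only minor imprecision is calling $2\pi(-\la)^{1/2s}$ the \emph{minimal} period (the minimal period of $v$ could a priori be $2\pi/k$ for some integer $k$), but this does not affect the conclusion as stated.
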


\begin{remark}\label{rem}{\rm 
i) Two homoclinic solution of \eqref{eq-ejemplo-impar-2}, can be seen to exist by
the existence and decay estimates established in Proposition 1.1 of \cite{MR3070568}. In fact the existence of a positive $v$ homoclinic solution of $(-\De)^s v+v - v^p=0$  in $\R$ comes from \cite{MR3070568} and the negative just considering the change $u:=-v< 0$ that clearly satisfies $(-\De)^s u+u - |u|^{p-1}u=0$  in $\R$ . Since $u$ is negative we have  $(-\De)^s u+u + |u|^{p}=0$  in $\R$. \\
ii) With the same argument positive periodic solutions of \eqref{eq-ejemplo-impar-2} with large periods (given by the above corollary) give negative periodic solutions of  $(-\De)^s u+u + |u|^{p}=0$  in $\R$ with large periods. Also the negative homoclinic solution for this problem is obtained. {For the case $s=1$ this equation corresponds to $u''=u+|u|^p$ with phase portrait as in Figure 1 (left).}}
\end{remark}

It is worthy of mention that in the local case $s=1$ all solutions of 
\eqref{eq-ejemplo-impar-1} are periodic, and 
there exist solutions of all periods. See the phase portrait in Figure 1 (right).

\medskip
\begin{center}

\includegraphics[width=9cm]{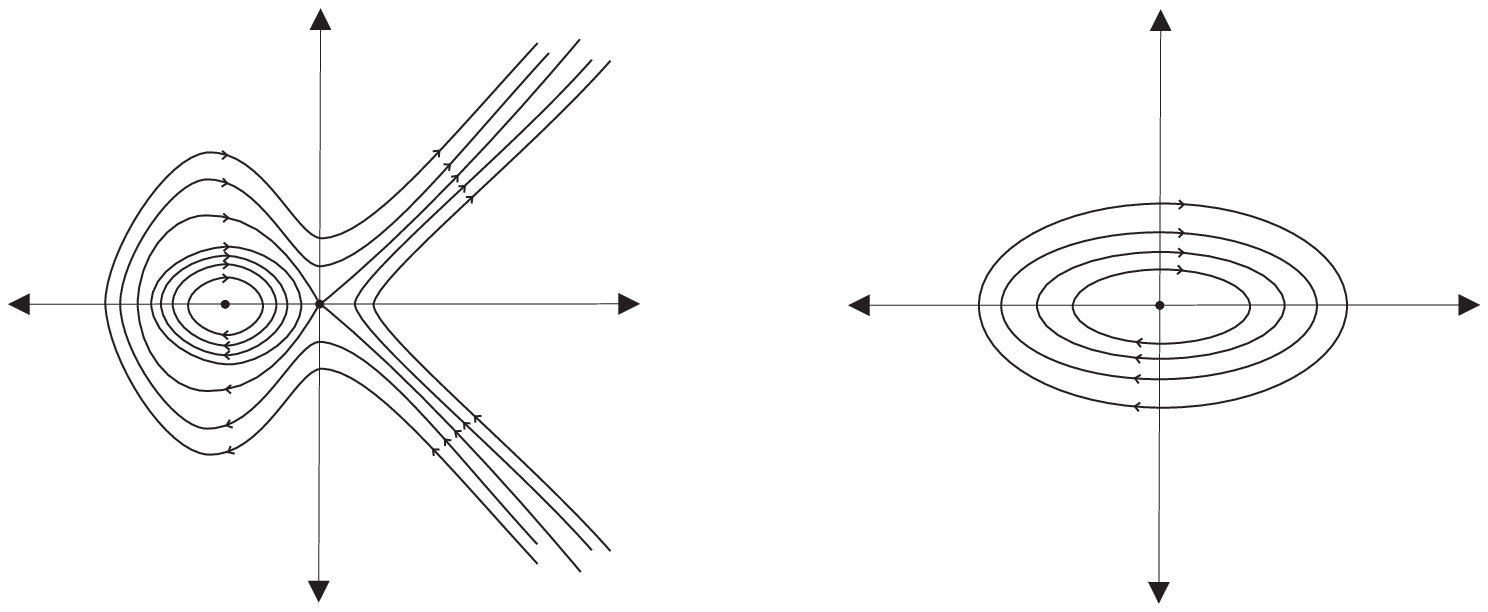}

\medskip
{\sc Figure 1.} %\textcolor{blue}{Phase portraits in the case $s=1$: problem \eqref{eq-ejemplo-par} } on the left and \eqref{eq-ejemplo-impar-1} on the right.

\end{center}

\bigskip

To close this section, we will briefly consider the stationary version of the 
well-known Benjamin-Ono equation, given by 
\begin{equation}\label{eq-benjamin-ono}
u_x - 2uu_x +(-\De)^s u_x=0 \quad \hbox{in } \R.
\end{equation}
The Benjamin-Ono equation was introduced for $s=1/2$ in \cite{MRninguna} and \cite{MR0398275} in the 
attempt to model one-dimensional internal waves in deep water, and has been well-studied since. See also 
\cite{JCFNthesis}.

We will show that as a consequence of Corollary \ref{coro-1} {and Corollary \ref{coro-3}}, periodic solutions of 
\eqref{eq-benjamin-ono} can be constructed.

\begin{corollary}\label{coro-benjamin-ono}
{Problem \eqref{eq-benjamin-ono} for $s>1/6$ admits three classical solution: A periodic solutions with minimal period close to $2\pi$ and amplitud close to $1$, a periodic {positive} solutions with large periods  and a positive homoclinic solution}
\end{corollary}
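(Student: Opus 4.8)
The plan is to integrate the Benjamin--Ono equation \eqref{eq-benjamin-ono} once in $x$ and reduce it to the stationary equations already treated in Corollary \ref{coro-1} and Corollary \ref{coro-3}. Indeed, integrating \eqref{eq-benjamin-ono} with respect to $x$ gives
$$
u - u^2 + (-\De)^s u = c \quad \hbox{in } \R
$$
for some integration constant $c\in \R$. For periodic (or homoclinic) solutions one can normalise; in the homoclinic case $c=0$ automatically by the decay of $u$, while in the periodic case one may choose $c$ conveniently. Completing the square, writing $u = \frac12 + w$ (possibly after a rescaling $x\mapsto \mu x$ and $u\mapsto \mu^{2s} u$ to adjust coefficients), the equation becomes
$$
(-\De)^s w = \pm w + |w|^{p-1}w
$$
with $p=2$, which is exactly of the form \eqref{eq-ejemplo-par} (after absorbing the quadratic term $w^2=|w|^{p-1}w$ with $p=2$) or \eqref{eq-ejemplo-impar-2}. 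One then simply quotes the existence statements already proved.

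First I would carry out the reduction carefully, keeping track of signs: the quadratic nonlinearity $-u^2$ after the shift $u=a+w$ produces a term $-w^2 - 2aw - a^2$, so choosing $a$ appropriately turns the linear-in-$w$ coefficient into $\pm 1$ and moves the constant into $c$. This gives three regimes. The first: applying Corollary \ref{coro-1} (with $p=2$, noting $2 < 2^*_s - 1 = \frac{1+2s}{1-2s}$ precisely when $s > \frac16$, which is where the hypothesis $s>1/6$ enters) yields a small-amplitude periodic solution with minimal period close to $2\pi$ and amplitude close to $1$ after undoing the rescaling. The second: from Corollary \ref{coro-3} applied to \eqref{eq-ejemplo-impar-2} one gets positive periodic solutions with large periods, which after the shift back produce the claimed positive periodic solution of \eqref{eq-benjamin-ono}. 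The third: the positive homoclinic solution comes from Remark \ref{rem}(i), i.e.\ from the existence of a positive homoclinic solution of $(-\De)^s v + v - v^p = 0$ established in \cite{MR3070568} (again with $p=2$ and the subcriticality constraint $s>\frac16$).

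Then I would verify that each solution produced is genuinely a solution of \eqref{eq-benjamin-ono}: since all the solutions furnished by Corollaries \ref{coro-1} and \ref{coro-3} are classical ($C^{2s+\al}$), one may differentiate once in $x$ to recover \eqref{eq-benjamin-ono} from the integrated equation, using that $(-\De)^s$ commutes with $\partial_x$ on smooth periodic (or decaying) functions. The three solutions are distinct because one has small amplitude and period near $2\pi$, one has large period, and one is homoclinic (infinite period), so no two can coincide.

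The main obstacle I expect is purely bookkeeping: getting the rescaling and shift constants right so that the reduced equation has exactly the coefficient structure ($\pm u + |u|^{p-1}u$ with $p=2$) required by the quoted corollaries, and checking that the subcriticality condition $p=2 < 2^*_s-1$ translates exactly to $s>\frac16$. There is no genuine analytic difficulty beyond what is already contained in Theorem \ref{th-bifurcacion}, Theorem \ref{th-variacional}, and the cited work \cite{MR3070568}; the corollary is essentially a transcription of those results through an elementary change of variables.
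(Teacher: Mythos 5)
Your proposal is correct and follows essentially the same route as the paper: the paper likewise works with the once-integrated equation $(-\De)^s u + u - u^2=0$, producing its three solutions from Corollary \ref{coro-1} with $p=2$ shifted by $+1$ (the condition $2<2^*_s-1$ being exactly $s>\tfrac16$), from Corollary \ref{coro-3}, and from the homoclinic of Remark \ref{rem}, and then differentiates these smooth solutions to recover \eqref{eq-benjamin-ono}. Only a bookkeeping remark: the relevant shifts are $u=a+w$ with $a=1$ (yielding $(-\De)^s w=w+w^2$, i.e.\ \eqref{eq-ejemplo-par}) and $a=0$ (yielding \eqref{dg}), not $a=\tfrac12$, and no rescaling is needed since the linear coefficient in \eqref{eq-benjamin-ono} is already $1$.
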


\begin{proof}
Let $v$ be a classical periodic solution {with small amplitude and minimal period close to $2\pi$} of \eqref{eq-ejemplo-par} with $s>\frac{1}{6}$ and
$p:=2<2^*_s-1$. Then $v\in C^\infty(\R)$ by standard regularity. If we let $u= v+1$, it 
is easily seen that $u$ is a periodic classical solution of
\begin{equation}\label{dg}
(-\De)^s u = -u+u^2 \quad \hbox{in } \R.
\end{equation}
Moreover, since $u\in C^\infty(\R)$, we can differentiate this equation to obtain that 
$u$ is a periodic solution of \eqref{eq-benjamin-ono}. 

On the other hand by Corollary \ref{coro-3}, when $s>\frac{1}{6}$, there exists $w> 0$ a classical periodic solution of \eqref{dg} with large period and the positive homoclinic solution $\widetilde{w}> 0$ associated to this problem (see Remark \ref{rem}). Again differentiate the equation satisfies by these $C^\infty(\R)$ solutions we find two more kinds of solutions to \eqref{eq-benjamin-ono}. 
\end{proof}

\bigskip

%%%%%%%%%%%%%%%%%%%%%%%%%%%%%%%%%%%%%%%%%%%%%%%%%%%%%%%%%%%%%%%%%%%%%%%%%
%Acknowledgements
%%%%%%%%%%%%%%%%%%%%%%%%%%%%%%%%%%%%%%%%%%%%%%%%%%%%%%%%%%%%%%%%%%%%%%%%%

\noindent {\bf Acknowledgements.} 
	All authors were partially supported by Ministerio de Eco\-no\-m\'ia y 
	Competitividad under grant MTM2014-52822-P (Spain). 
	B. B. was partially supported by a MEC-Juan de la Cierva postdoctoral 
	fellowship number  FJCI-2014-20504 (Spain). 
	A. Q. was partially supported by Fondecyt Grant No. 1151180 Programa Basal, 
	CMM. U. de Chile and Millennium Nucleus Center for Analysis of 
	PDE NC130017. 
	
%%%%%%%%%%%%%%%%%%%%%%%%%%%%%%%%%%%%%%%%%%%%%%%%%%%%%%%%%%%%%%%%%%%%%%%%%
%Bibliography
%%%%%%%%%%%%%%%%%%%%%%%%%%%%%%%%%%%%%%%%%%%%%%%%%%%%%%%%%%%%%%%%%%%%%%%%%

\bibliographystyle{abbrv}
%Others Bibtex styles: abbrv, Nabbrv, abstract, acm, agsm, alpha, nalpha,
%Nalpha, authordate1, authordate2, authordate3, authordate4, amsalpha,
%amsplain, annotate, annotation, apa, apalike, apalike2, apasoft

\bibliography{Bibliografia}

\end{document}